\begin{document}

\begin{frontmatter}



\newtheorem{thm}{Theorem}[section]
\newtheorem{prop}[thm]{Proposition}
\newtheorem{lem}[thm]{Lemma}
\newtheorem{rem}[thm]{Remark}
\newtheorem{cor}[thm]{Corollary}
\newtheorem{exr}[thm]{Exercise}
\newtheorem{ex}[thm]{Example}
\newtheorem{defn}[thm]{Definition}

\title{Soft Connectedness, Soft Path Connectedness and the Category of Soft Topological Groups}


\author[inst1]{Nazmiye Alemdar}
\ead{nakari@erciyes.edu.tr}

\author[inst2]{H\"{u}rmet Fulya Ak{\i}z\corref{correspondingauthor}}
\cortext[correspondingauthor]{Corresponding author: H\"{u}rmet Fulya Ak{\i}z}
\ead{hfulya@gmail.com}

\author[inst3]{Halim Ayaz}
\ead{mrckl_halim@hotmail.com}
\affiliation[inst1]{organization={Department of  Mathematics, Faculty of Science},
            addressline={Erciyes University},
            postcode={38039},
            state={Kayseri},
            country={Turkey}}
 \affiliation[inst2]{organization={Department of  Mathematics, Faculty of Art and Science},
            addressline={Bozok University},
            city={Yozgat},
            postcode={66100},
             country={Turkey}}
\affiliation[inst3]{organization={Graduate School of Natural and Applied Sciences},
            addressline={Erciyes University},
            city={Kayseri},
            postcode={38039},
            country={Turkey}}
\begin{abstract}
In this study, the soft usual topology compatible with the usual topology of $\mathbb{R}$ is defined, and using its subspace topology on the interval $[0,1]$, the concept of a soft path is introduced. Within this context, the notions of soft connectedness and soft path connectedness are developed, their relationship is analyzed, and it is shown that these properties are preserved under soft continuous mappings. Moreover, the behavior of these concepts within soft topological groups is investigated in detail. Finally, the category of soft topological groups is constructed, its morphisms are identified, and it is shown that this category forms a symmetric monoidal category.
\end{abstract}

\begin{keyword}
Soft topological groups \sep soft connectedness\sep soft path connectedness\sep category theory\sep monoidal category
\MSC 18A05\sep  18A20\sep  19D23\sep  22A10\sep  54D05\sep  03E72
\end{keyword}

\end{frontmatter}

\section{Introduction}

\label{intro}
With the development of fuzzy logic and uncertainty theories, the variety of mathematical structures used to model uncertain data has significantly increased \cite{Zadeh,Pawlak}. In 1999, Molodtsov  introduced the theory of soft sets as a parameter-based approach, extending the applicability of the classical set theory to uncertain environments \cite{Mol}. His aim was to overcome some of the difficulties encountered in traditional approaches and to provide a parameter-oriented solution method for problems in fields such as physics, economics, and engineering. Consequently, this theory has found applications in operations research, game theory, and probability theory \cite{Tripathy}.

Following the emergence of the soft set concept, both topological and algebraic notions on this structure developed rapidly. Maji, Biswas, and Roy, in \cite{Maji}, studied the fundamental properties of soft sets; Akta\c{s} and \c{C}a\v{g}man, in \cite{br1}, introduced the concept of soft groups, while Shabir and Naz \cite{Naz} as well as \c{C}a\v{g}man, Karata\c{s}, and Engino\v{g}lu \cite{Naim} independently defined soft topological spaces. Kharal and Ahmad, in \cite{Kha} proposed the notion of soft mappings, and Ayg\"{u}no\v{g}lu and Ayg\"{u}n \cite{Aygun}, together with Zorlutuna et al. in \cite{Zorlutuna}, introduced soft continuity. Later, Hida, in \cite{Hida}, provided a new and distinct definition of soft continuity and established the notion of soft topological groups. This structure reinterprets classical topological groups within the framework of soft topology, allowing the study of algebraic and topological properties under parameterized uncertainty.The structure of soft topological groups has also been examined in a different way in \cite{Tahat}.

Category theory, first introduced in the 1940s by Samuel Eilenberg \cite{Eilenberg} and Saunders Mac Lane \cite{MacLane}, plays a role in mathematics comparable to that of set theory as a unifying language across nearly all disciplines. More specifically, it identifies similarities across different fields of mathematics and provides a common framework for unification. A category consists of objects and morphisms between them, where composition and identity laws hold. Category theory has been particularly influential in topology, especially in the development of homotopy theory and homology theory. Monoidal categories were independently introduced by B\'{e}nabou \cite{Benabou} as "categories with multiplication" and by Mac Lane \cite{MacLane2} under the name "bicategories" (not to be confused with two-categories). These structures include additional data explaining how objects and morphisms can be combined in parallel.

Monoidal categories extend beyond basic category theory and find significant applications in modeling the multiplicative fragment of intuitionistic linear logic, in establishing the mathematical foundation of topological order in condensed matter physics and in quantum information, quantum field theory, and string theory. Every cartesian monoidal category is also a symmetric monoidal category \cite{Baez}. These structures are further employed in modeling open games in game theory \cite{Tripathy}. Recently, Alemdar and Arslan \cite{AlemdarArslan} constructed the category SGp, consisting of soft groups as objects and soft group homomorphisms as morphisms, and proved that it forms a symmetric monoidal category.

Topological group theory is an important field where algebraic and topological structures are studied together. In classical topological groups, the fundamental group $\pi_1(X, x_0)$ is among the most powerful tools of algebraic topology, often used to investigate topological properties of spaces \cite{Rotman}. If $X$ is a connected topological space with a universal covering and $x_0 \in X$, then for a subgroup $G$ of $\pi_1(X, x_0)$, according to \cite[Theorem~10.42]{Rotman}, there exists a covering map $p : (\widetilde{X_G}, \widetilde{x_0}) \to (X, x_0)$ with characteristic group $G$. Thus, a new structure is obtained that preserves both the group structure and the covering property. This method has been generalized to topological $R$-modules \cite{Alemdar2013}, topological groups with operations \cite{MucukandSahan}, and irresolute topological groups \cite{BasarAkiz2025}, and was also studied in the context of groupoids in \cite{Alemdar2012}. Through this approach, invariants such as connectedness and holes of a topological space are analyzed using the fundamental group. Moreover, the collection of fundamental groups in a space can be interpreted as the objects of a groupoid, where every morphism is an isomorphism. Our aim, however, is to adapt this method to the framework of soft topological groups by constructing the notion of the soft fundamental group.

In this study, the concept of soft connectedness, defined in terms of set decomposition, was introduced and its fundamental properties were investigated in soft topological spaces.  Subsequently, the soft usual topology compatible with the usual topology of $\mathbb{R}$ was defined. Based on this definition, by considering the soft subspace topology on the interval $[0,1]$, the notion of a soft path was established and the concept of soft path connectedness was introduced. This construction provides the theoretical foundation necessary for extending the concepts of homotopy and fundamental group from classical topology to the context of soft topology. Furthermore, the behavior of soft connectedness and soft path connectedness within soft topological groups was analyzed, and it was proven that these properties are preserved under soft continuous mappings.

Finally, the category of soft topological groups was constructed, its morphisms were identified, and the existence of terminal and product objects within this category was demonstrated. The results show that the category of soft topological groups forms a symmetric monoidal category.

\section{Preliminaries}

\begin{defn}\label{}
\em{Let \( X \) be the initial universe, \( \mathcal{P}(X) \) be the power set of \( X \), and \( \xi \) be  the set of parameters.
Let \( W : \xi \to \mathcal{P}(X) \) be a set-valued function.
Then, the set
\[
 W_\xi = \{ (e, W(e)) : e \in \xi,\ W(e) \in \mathcal{P}(X) \}
\]
is called a \textbf{soft set over $X$}~\cite{Mol}.

}
\end{defn}
\begin{defn}\label{}
\em{ Let \( W_\xi \) and \( W'_{\xi'} \) be two soft sets over  \( X \).
 \begin{enumerate}
  \item If \( W(e) = \emptyset \), for every \( e \in \xi \), then the soft set $W_\xi$ is called a \textbf{soft empty set}, and it is denoted by \( \emptyset_\xi \).
  \item If \( W(e) = X \), for every \( e \in \xi \), then the soft set $W_\xi$ is called an \textbf{absolute soft set}, and it is denoted by \( X_\xi \).
  \item If \( \xi \subseteq \xi' \) and \( W(e) \subseteq W'(e) \), for every \( e\in \xi \) , then \( W_{\xi} \) is called a \textbf{soft subset} of \( W'_{\xi'} \), and it is denoted by \( W_{\xi}  \widetilde{\sqsubseteq} W'_{\xi'}\).
   \item If  \(W_{\xi}  \widetilde{\sqsubseteq}W'_{\xi'} \) and  \(W'_{\xi'}  \widetilde{\sqsubseteq}W_{\xi}\), then \( W_{\xi} \) is said to be \textbf{soft equal} to \( W'_{\xi'} \) and  is denoted by
\(W_{\xi} = W'_{\xi'}\).
  \item Let \( \xi'' = \xi \cap \xi' \).
If, for each \( e \in \xi'' \), \(
W''(e) = W(e) \cap W'(e),
\) then the soft set \( W''_{\xi''} \) is called the \textbf{soft intersection} of \( W_\xi \) and \( W'_{\xi'} \),
and is denoted by \( W_\xi  \widetilde{\sqcap} W'_{\xi'} \).
  \item Let \( \xi''' = \xi \cup \xi' \).
If, for each \( e \in \xi''' \):
\[
W'''(e) =
\begin{cases}
W(e), & \text{if } e \in \xi \setminus \xi' \\
W'(e), & \text{if } e \in \xi \setminus \xi'' \\
W(e) \cup W(e), & \text{if } e \in \xi \cap \xi'
\end{cases}
\]then \( W'''_{\xi'''} \) is called the \textbf{soft union} of the soft sets \( W_\xi \) and \( W'_{\xi'} \),
and is denoted by \( W_\xi  \widetilde{\sqcup} W'_{\xi'} \).
\item  For each \( e \in \xi \), define \(
W^c(e) = X \setminus W(e)
\). Then \( {W_\xi^c} \) is called the \textbf{soft complement} of the soft set \( W_\xi \)~\cite{Maji}.
\end{enumerate}
}
\end{defn}

\begin{defn}
\em{
Let \( W_\xi \) be a soft set over  \( X \) and let \( x \in X \).
If \( x \in W(e) \) for every \( e \in \xi \), then \( x \) is called a \textbf{soft element} of the soft set \( W_\xi \), and is denoted by
\(
x \widetilde{\in}W_\xi
\).
Otherwise, if there exists at least one \( e \in \xi \) such that \( x \notin W(e) \), then \( x \) is not a soft element of \( W_\xi \), and is denoted by
\(
x \widetilde{\notin}W_\xi
\)
\cite{Naz}.
}
\end{defn}


As a convention, throughout this paper, we denote by $P^S(X)$ the collection of all soft sets over the initial universe $X$.

\begin{defn}\em{ Let \( W_\xi  \in P^S(X) \) and \( U_{\mathscr{D}} \in P^S(Y) \), and let  \( \varrho: X \rightarrow Y \) and \( \varphi: \xi \rightarrow \xi' \) be two functions.
\begin{itemize}
 \item[(i)]
\( (\varphi, \varrho): W_\xi \rightarrow U_{\mathscr{D}} \) is said to be a \textbf{soft mapping} from \( W_\xi \) to \( U_{\mathscr{D}} \)
if and only if
\[
\varrho(W_\xi(e)) = U_{\mathscr{D}}(\varphi(e)),
\]
for all $ e \in \xi.$
\item[(ii)]
The \textbf{soft image} of the soft set \( W_\xi \in P^S(X) \) under the mapping \( \varrho\) with respect to the mapping \( \varphi \)
is the soft set \( \varrho(W_\xi)_{\varphi(\xi)} \in  P^S(Y) \), defined by
\[
\varrho(W_\xi)_{\varphi(\xi)}(d) = \bigcup_{\varphi(e) = d} \varrho(W_\xi(e)),\]
for all \(d \in \varphi(\xi).\)
\item[(iii)]
The \textbf{inverse soft image} of the soft set \(U_{\mathscr{D}} \in P^S(Y) \) under the mapping \( \varrho \) with respect to the mapping \( \varphi \)
is the soft set \( \varrho^{-1}(U_{\mathscr{D}})_\xi \in P^S(X) \), defined by
\[
\varrho^{-1}(U_{\mathscr{D}})_\xi(e) = \varrho^{-1}(U_{\mathscr{D}}(\varphi(e))),
\]
for all $e \in \xi$ \cite{Kha}.

\end{itemize}
}
\end{defn}

This definition was arranged by Tahat at al.~\cite{Tahat}  as in the remark below:
\begin{rem}\em{
 Let \( W_\xi  \in P^S(X) \) and \( U_{\mathscr{D}} \in P^S(Y) \), and let  \( \varrho: X \rightarrow Y \) and \( \varphi: \xi \rightarrow \xi'\) be two functions.
 \begin{itemize}
 \item[(i)]
Let \( (\varphi \times \varrho) : \xi \times X \to {\mathscr{D}} \times Y \) be a soft mapping defined by
\(
(\varphi \times \varrho)(e, x) = (\varphi(e), \varrho(x))\), for all \( e \in \xi,\, x \in X.
\)
Then,
\[
(\varrho(W_\xi))_{\varphi(\xi)} = (\varphi \times \varrho)(W_\xi).
\]
\item[(ii)]
Let \( (\varphi \times \varrho) : \xi \times X \to {\mathscr{D}} \times Y \) be a soft mapping defined by
\((\varphi \times \varrho)(e, x) = (\varphi(e), \varrho(x))\),for all \(e \in \xi,\ x \in X.\)
Then, the inverse image of \( U_{\mathscr{D}} \) under \( \varrho \) with respect to \( \varphi \) satisfies
\[
(\varrho^{-1}(U_{\mathscr{D}}))_\xi = (\varphi \times \varrho)^{-1}(U_{\mathscr{D}}).
\]

\item[(iii)]
Let \( \xi= {\mathscr{D}} \) and \( \varphi = \mathbb{I}_\xi\) be the identity mapping on \( \xi \).
Then for all \( e \in \xi \), the image of the soft set \( W_\xi \) under \( \varrho \) with respect to \( \varphi \) satisfies $$(\varrho(W_\xi))_{\varphi(\xi)}(e) = \varrho(W_\xi(e)).$$

\end{itemize}
}
\end{rem}

\begin{defn}\label{}
\em{Let $X$  be the initial universe and $\xi$ be a set of parameters. A collection $\zeta$  of soft sets over  $X$ is called a \textbf{soft topology} on \( X \) if;
\begin{itemize}
  \item  \( {\emptyset}_\xi \) and  \( X_\xi \) belongs to \( \zeta\),
  \item The soft union of any family of soft sets in \( \zeta\) belongs to \( \zeta \),
  \item The soft intersection of any two soft sets in \( \zeta\) belongs to \( \zeta \).
\end{itemize}
The pair \( (X, \zeta)_\xi \) is called a \textbf{soft topological space}. The members of
 \( \zeta \) are called  \textbf{soft open sets}.
A soft set \( W \) over  \(X\) is called \textbf{soft closed } if soft complement \( W^c \) soft open
\cite{Naz}.
}
\end{defn}

\begin{ex}\em{
 Let \( X \) be an initial universal set and \( \xi \) be a parameter set. The collection \( \{{X}_\xi,\emptyset_\xi  \} \), consisting of the soft empty set and soft absolute set, is a soft topology on X, called \textbf{soft trivial topology}
  or \textbf{soft indiscrete topology} on  $X$ \cite{Naz}.
  }
\end{ex}

\begin{prop}
Let \( (X, \zeta)_\xi \) be a soft topological space.
Then, for each \( e \in \xi\)

\[
\zeta_e = \{ W(e) \mid W \in \zeta \}
\]
is a topology on \( X \) \cite{Naz}.
\end{prop}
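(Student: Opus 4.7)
The plan is to verify directly the three axioms of a (classical) topology on $X$ for the collection $\zeta_e$, using the soft-topology axioms of $\zeta$ together with the evaluation map $W \mapsto W(e)$. Since all members of $\zeta$ share the common parameter set $\xi$, the soft-union and soft-intersection formulas from the preliminaries simplify considerably: when $\xi'=\xi$, the soft union reduces, for every $e\in\xi$, to the ordinary union $W(e)\cup W'(e)$, and similarly the soft intersection reduces to $W(e)\cap W'(e)$. I would isolate this observation as a short preparatory remark before the main verification, since it is the single fact that allows evaluation at $e$ to commute with the soft set operations.

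First, for the empty and whole sets, I would note that $\emptyset_\xi\in\zeta$ and $X_\xi\in\zeta$ by the first soft-topology axiom, and by definition $\emptyset_\xi(e)=\emptyset$ and $X_\xi(e)=X$, so both $\emptyset$ and $X$ lie in $\zeta_e$. Second, for arbitrary unions, given a family $\{W^i(e)\}_{i\in I}\subseteq \zeta_e$ with each $W^i\in\zeta$, I would form the soft union $\widetilde{\bigsqcup}_{i\in I}W^i$, which lies in $\zeta$ by the second soft-topology axiom, and use the simplified formula above to conclude $\bigl(\widetilde{\bigsqcup}_{i\in I}W^i\bigr)(e)=\bigcup_{i\in I}W^i(e)$, so this union is a member of $\zeta_e$. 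Third, for the intersection of two elements $W(e),W'(e)\in\zeta_e$, the same reasoning applied to $W\,\widetilde{\sqcap}\,W'\in\zeta$ yields $(W\,\widetilde{\sqcap}\,W')(e)=W(e)\cap W'(e)\in\zeta_e$; finite intersections then follow by induction.

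I do not anticipate any genuine obstacle: the statement is essentially a bookkeeping exercise showing that the ``slice at $e$'' functor sends a soft topology to an ordinary topology. The only thing that deserves care is the soft-union definition as written in the excerpt, which is stated for possibly differing parameter sets and contains a piecewise formula; I would pause to note explicitly that in the common-parameter case, the ``mixed'' cases disappear and only the clause $W(e)\cup W'(e)$ survives. Once that simplification is in place, the three verifications are one-line checks that directly match the three axioms of a classical topology on $X$.
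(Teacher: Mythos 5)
Your proof is correct, and the direct ``evaluate at $e$'' verification (with the observation that, for a common parameter set $\xi$, soft union and soft intersection commute with evaluation at $e$) is the standard argument. The paper itself states this proposition without proof, citing it from Shabir and Naz, so there is no in-paper argument to compare against; your write-up supplies exactly the routine verification that reference contains.
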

\begin{defn}\em{
Let \( (X, \zeta)_\xi \) be a soft topological space, and let \( A\subseteq X\) be a nonempty subset.
Define a mapping \( W_A:\xi \to \mathcal{P}(X)\) by
\(
W_A(e) = W(e) \cap A
\)
for all $W \in \zeta$.
Then the collection

\[
\zeta_A = \{ W_A \mid W \in \zeta \}
\]
is called the \textbf{soft subtopology} on \( A \), and the pair \( (A, \zeta_A)_\xi \) is called a \textbf{soft subspace} of \( (X, \zeta)_\xi \) \cite{Naz}.
}
\end{defn}
\begin{defn}\em{
 Let  \( (X, \zeta)_\xi \)  be a soft topological space,  \(x\in X \), and $U$ be a soft set on $X$ such that \( x \widetilde{\in} U \).
If there exists a soft open set $W$ such that \( x \widetilde{\in} W \widetilde{\sqsubseteq} U\), then the soft set \( U \) is called a \textbf{soft neighborhood} of \( x \) \cite{Naz}.
}
\end{defn}

\begin{defn}\em{
Let \( \mathscr{B} \) be a collection of soft sets over \( X\) which is closed under finite soft intersection.
Then \[
\zeta = \{ {\emptyset}_\xi, {X}_\xi  \} \cup \left\{ \widetilde{\bigsqcup} B \,\middle|\, B \subseteq \mathscr{B} \right\}
\] is the soft topology induced by the basis \( \mathscr{B} \)
\cite{Aygun}.
}
\end{defn}
\begin{defn}\em{
Let \( \mathscr{S} \) be an arbitrary collection of soft sets over \( X \).
Then the collection
\[
\mathscr{B}=\left\{ W_1 \widetilde{\sqcap} W_2 \widetilde{\sqcap} \dots \widetilde{\sqcap} W_k \,\middle|\, W_1, W_2, \dots,W_k \in \mathscr{S} \right\}
\]
is a soft basis for a soft topology on \( X \) \cite{Aygun}.
}
\end{defn}
\begin{defn}\em{
Let \( (X, \zeta)_\xi \) and \( (X', \zeta')_{\xi'} \) be two soft topological spaces. A soft mapping \[ (\varphi, \varrho):(X, \zeta)_\xi  \to (X', \zeta')_{\xi'} \] is \textbf{soft continuous} at the point $x\in X$ if for every soft open neighborhood \( W' \) of \( \varrho(x) \),
there exists a soft  open neighborhood \( W \) of \( x\) such that
\(
\varrho(x) \widetilde{\in} (\varphi, \varrho)(W) \widetilde{\sqsubseteq} W'
\). If $(\varphi, \varrho)$ is soft continuous at every point of $X$, then $(\varphi, \varrho)$ is called a \textbf{soft continuous mapping} \cite{Hida}.
}
\end{defn}

If \( \xi = \xi' \) and \( \varphi = \mathbb{I}_\xi \) (the identity mapping on \( \xi \)),
then the definition of soft continuity is as follows:
\begin{defn}\label{cont}\em{
Let \( \varrho: X \to X' \) be a function.
Then \( (\mathbb{I}_\xi,\varrho) \) is called a \textbf{soft continuous mapping} from \( (X, \zeta)_\xi \) to \( (X', \zeta')_\xi \)
if the following condition holds \cite{Hida}:

\begin{itemize}
    \item For every \( x \in X \), and for every soft open neighborhood \( W' \) of \( \varrho(x) \),
    there exists a soft open  neighborhood \( W \) of \( x \) such that
    \(
    \varrho(W) \widetilde{\sqsubseteq} W'.
    \)
\end{itemize}
}
\end{defn}
\begin{prop}\label{inversopen}
    Let \( (X, \zeta)_\xi \) and \( (X', \zeta')_\xi \) be two soft topological spaces and let \( \varrho: X \to X' \) be a function. Then the soft mapping \( (\mathbb{I}_\xi,\varrho)\colon (X, \zeta)_\xi \to (X', \zeta')_\xi \) is soft continuous if for every soft open set \( W' \in \zeta' \), the inverse image \( (\mathbb{I}_\xi,\varrho)^{-1}(W') \in \zeta \) \cite{Hida}.

\end{prop}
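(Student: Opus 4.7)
The plan is to mimic the classical proof that preimage-openness implies continuity, adapted to the soft setting using Definition~\ref{cont}. Fix an arbitrary $x \in X$ and let $W' \in \zeta'$ be any soft open neighborhood of $\varrho(x)$, so by the definition of soft element we have $\varrho(x) \in W'(e)$ for every $e \in \xi$. The candidate neighborhood of $x$ will be $W := (\mathbb{I}_\xi,\varrho)^{-1}(W')$, which is soft open by the hypothesis of the proposition. Two things then remain: first, that $x \widetilde{\in} W$, and second, that $\varrho(W) \widetilde{\sqsubseteq} W'$.

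For the first point I would unpack the definition of the inverse soft image: since $\varphi = \mathbb{I}_\xi$, for each $e \in \xi$ we have
\[
W(e) = (\mathbb{I}_\xi,\varrho)^{-1}(W')(e) = \varrho^{-1}\bigl(W'(\mathbb{I}_\xi(e))\bigr) = \varrho^{-1}(W'(e)).
\]
Because $\varrho(x) \in W'(e)$ for every $e$, we get $x \in \varrho^{-1}(W'(e)) = W(e)$ for every $e$, hence $x \widetilde{\in} W$. For the second point I would invoke Remark~(iii), which in the case $\varphi = \mathbb{I}_\xi$ gives $(\varrho(W))(e) = \varrho(W(e)) = \varrho(\varrho^{-1}(W'(e))) \subseteq W'(e)$ for every $e \in \xi$, so by the definition of soft subset $\varrho(W) \widetilde{\sqsubseteq} W'$.

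Putting these together, $W$ is a soft open neighborhood of $x$ with $\varrho(W) \widetilde{\sqsubseteq} W'$, which is precisely the condition in Definition~\ref{cont}. Since $x \in X$ and $W'$ were arbitrary, $(\mathbb{I}_\xi,\varrho)$ is soft continuous at every point, completing the proof.

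I do not expect any serious obstacle here; the argument is essentially a bookkeeping translation of the classical statement. The only subtle point to watch is that the soft element relation $x \widetilde{\in} W$ requires $x \in W(e)$ for \emph{every} parameter $e$, so the hypothesis that $W'$ is a soft neighborhood of $\varrho(x)$ must likewise supply $\varrho(x) \in W'(e)$ for every $e$; this is exactly the strength needed to conclude $x \widetilde{\in} \varrho^{-1}(W')$. Everything else reduces to the identity $\varrho(\varrho^{-1}(S)) \subseteq S$ applied parameter-wise.
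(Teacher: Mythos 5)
Your argument is correct: taking $W=(\mathbb{I}_\xi,\varrho)^{-1}(W')$, verifying $x\widetilde{\in}W$ parameter-wise from $W(e)=\varrho^{-1}(W'(e))$, and concluding $\varrho(W)\widetilde{\sqsubseteq}W'$ from $\varrho(\varrho^{-1}(W'(e)))\subseteq W'(e)$ is exactly the standard proof, and it correctly uses the definitions of inverse soft image and Definition~\ref{cont}. The paper itself gives no proof of this proposition (it is quoted from Hida), so there is nothing to compare against; your write-up fills that gap faithfully.
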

 The converse of this proposition is not true in general. Hida \cite{Hida} gives the following example which shows that this is not always true.

 \begin{ex}\em{
Let $X = \{v\}$ and $\xi = \{e_1,e_2\}$, and consider the soft topologies, $\zeta=\{X_\xi,\emptyset_\xi\}$ and $\zeta'=\{X_\xi,\emptyset_\xi,\{(e_2,\{v\})\}\}$.
Then, the soft mapping $(\mathbb{I}_\xi,\mathbb{I}_X) : (X, \zeta)_\xi \to (X', \zeta')_\xi$ is soft continuous. Although $\{(e_2,\{v\})\}$ is soft open in $\zeta'$,  $(\mathbb{I}_\xi,\mathbb{I}_X)^{-1}(\{(e_2,\{v\})\})=\{(e_2,\{v\})\}$ is not soft open in $\zeta$. Therefore
in a soft continuous mapping, the inverse image of a soft open set is not always soft open.}
 \end{ex}

\begin{defn}\em{
Let \( (X, \zeta)_\xi \) and \( (X', \zeta')_{\xi'} \) be two soft topological spaces.
Then the collection
\[
\{ W \widetilde{\times} W' \mid W \in \zeta, W' \in \zeta' \}
\]
induces a soft topology \( \zeta_\times \) on the cartesian product set \( X\times X' \).
The soft topological space \( (X \times X',\zeta_\times)_ {\xi \times \xi'} \)
is called the \textbf{soft product space} of \( (X, \zeta)_\xi \) and \( (X', \zeta')_{\xi'} \)
\cite{Hida}.}
\end{defn}

\begin{defn}\em{
Let \( (G,\ast) \) be a group. If there exists a soft topology $\zeta$ on $G$ with the parameter set $\xi$ such that

\begin{itemize}
    \item
 For every \( (a, b) \in G\times G \), and  for each soft open neighborhood \( W \) of \( a \ast b \),
    there exist soft open neighborhoods \( U \) of \( a\) and \( V \) of \( b \) such that:
    \(
    U \ast V \widetilde{\sqsubseteq} W,
    \)

    \item The inverse map \( \iota : G \to G \), defined by \( \iota(a) = a^{-1} \), is soft continuous,
\end{itemize}
then the pair \( ( G ,\zeta )_\xi\) is called a \textbf{soft topological group}
\cite{Hida}.}
\end{defn}

\begin{prop}\label{topgroup}
 \( ( G ,\zeta )_\xi\) is a soft topological group if and only if for every \( a, b \in G \) and for every soft open set \( W \) containing \( a\ast b^{-1} \), there exist soft open neighborhoods \( U \) of \( a\) and \( V \) of \( b \) such that
 \( U \ast (V)^{-1} \widetilde{\sqsubseteq} W
\) \cite{Hida}.
\end{prop}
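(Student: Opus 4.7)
The plan is to prove both directions of this biconditional separately, mirroring the classical argument for topological groups but adapted to the ``soft-image'' flavor of soft continuity used in this paper (Definition~\ref{cont} and the multiplication condition from the definition of a soft topological group).

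For the forward direction, I assume $(G,\zeta)_\xi$ is a soft topological group and fix $a,b\in G$ together with a soft open set $W$ containing $a\ast b^{-1}$. The strategy is to first apply the multiplication condition at the pair $(a,b^{-1})$, since $a\ast b^{-1}\widetilde{\in}W$: this produces soft open neighborhoods $U$ of $a$ and $V'$ of $b^{-1}$ with $U\ast V'\widetilde{\sqsubseteq}W$. Then I invoke soft continuity of $\iota$ at $b$, applied to the soft open neighborhood $V'$ of $\iota(b)=b^{-1}$, to obtain a soft open neighborhood $V$ of $b$ with $\iota(V)=V^{-1}\widetilde{\sqsubseteq}V'$. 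Chaining these inclusions gives $U\ast V^{-1}\widetilde{\sqsubseteq}U\ast V'\widetilde{\sqsubseteq}W$, as required.

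For the converse, I assume the combined condition and recover the two axioms of a soft topological group one at a time. First I recover soft continuity of inversion: given $b\in G$ and a soft open neighborhood $W$ of $b^{-1}$, I apply the hypothesis with the pair $(e,b)$, noting $e\ast b^{-1}=b^{-1}\widetilde{\in}W$. This yields soft open neighborhoods $U'$ of $e$ and $V$ of $b$ with $U'\ast V^{-1}\widetilde{\sqsubseteq}W$; since $e\widetilde{\in}U'$, in particular $V^{-1}\widetilde{\sqsubseteq}U'\ast V^{-1}\widetilde{\sqsubseteq}W$, i.e. $\iota(V)\widetilde{\sqsubseteq}W$, which is exactly soft continuity of $\iota$ at $b$. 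Next I recover the multiplication condition: given $a,b\in G$ and a soft open neighborhood $W$ of $a\ast b$, I apply the hypothesis to the pair $(a,b^{-1})$, since $a\ast(b^{-1})^{-1}=a\ast b\widetilde{\in}W$, obtaining soft open neighborhoods $U$ of $a$ and $V'$ of $b^{-1}$ with $U\ast(V')^{-1}\widetilde{\sqsubseteq}W$. Using the soft continuity of $\iota$ at $b$ just proved, I shrink $V'$: pick a soft open neighborhood $V$ of $b$ with $V^{-1}\widetilde{\sqsubseteq}V'$, so that $V\widetilde{\sqsubseteq}(V')^{-1}$ and consequently $U\ast V\widetilde{\sqsubseteq}U\ast(V')^{-1}\widetilde{\sqsubseteq}W$.

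The main obstacle I anticipate is bookkeeping around the soft-set operations $\widetilde{\sqsubseteq}$ and $\ast$ on soft subsets, in particular verifying carefully that $V^{-1}\widetilde{\sqsubseteq}V'$ implies $V\widetilde{\sqsubseteq}(V')^{-1}$ and that $e\widetilde{\in}U'$ together with $U'\ast V^{-1}\widetilde{\sqsubseteq}W$ yields $V^{-1}\widetilde{\sqsubseteq}W$; these are parameter-wise checks that rely on the definitions of soft subset and soft element from the Preliminaries, but none of them should present a conceptual difficulty. Once these set-level manipulations are in place, the rest is a routine transcription of the classical topological-group argument into the soft setting.
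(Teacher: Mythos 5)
Your proposal is correct. Note that the paper does not prove this proposition at all --- it is stated in the Preliminaries as a result imported from Hida's paper --- so there is no in-paper argument to compare against; your two-direction proof (substituting $b^{-1}$ for $b$ in the multiplication axiom and composing with soft continuity of $\iota$ for the forward direction, then testing the combined condition at the pairs $(e_G,b)$ and $(a,b^{-1})$ for the converse) is the standard one and all the parameter-wise verifications you flag do go through.
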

\section{ Soft Connectedness}
In view of the results obtained in this paper, we now introduce a new definition of soft connectedness, which is more compatible with the classical notion of connectedness in topology. In general topology, the definition of disconnectedness is given in terms of the set partition of a set by the open sets of the topology. In this study, in a similar manner, we relate the partition of a set to soft open sets and introduce the definition of soft disconnectedness as follows.

\begin{rem}\em{Let \( X\) be the initial universe and \( \xi \) be the set of parameters. If $A\subseteq X$, and the map $A:\xi \to \mathcal{P}(X)$ is defined as $A(e)=A$ for each $e\in \xi$,  then $A_\xi$ is a  soft set over \( X \). The soft set $A_\xi$, which is a soft subset of the absolute soft set $X_\xi$, is called an \textbf{absolute soft subset}.}
\end{rem}

\begin{defn}\em{
Let $(X, \zeta)_\xi $ be a soft topological space. If there exist two nonempty subsets $A$ and $B$ of $X$ such that $A \cup B = X$ and $A \cap B = \emptyset$, with both absolute soft subset $A_\xi$ and $B_\xi$ being soft open, then the soft topological space $(X, \zeta)_\xi $ is called  \textbf{soft disconnected}. Otherwise, $(X, \zeta)_\xi $ is called a \textbf{soft connected topological space}.}
\end{defn}

Let $(X, \zeta)_\xi $ be a soft topological space. If $A$ and $B$ are nonempty disjoint subsets of $X$ such that $X=A \cup B$, then it is obvious from the definition of absolute soft subset that \(
X_\xi = A_\xi \widetilde{\sqcup} B_\xi\) and \(A_\xi\widetilde{\sqcap} B_\xi=\emptyset_\xi\). However, even if $U$ and $V$ are soft open  in $\zeta$ such that $U\widetilde{\sqcup} V=X_\xi$, and $U\widetilde{\sqcap} V=\emptyset_\xi$, it does not necessarily follow that $U$ and $V$ constitute a set partition of $X$.
\begin{ex}\em{
 Let $X = \{\upsilon,\upsilon',\upsilon''\}$ and $\xi = \{e_1,e_2\}$, and consider the soft topology $$\zeta=\{X_\xi,\emptyset_\xi,\{(e_1,\{\upsilon,\upsilon'\}), (e_2,\{\upsilon''\})\}, \{(e_1,\{\upsilon''\}), (e_2,\{\upsilon,\upsilon'\})\}\}$$ on $X$.  Here, although $$\{(e_1,\{\upsilon,\upsilon'\}), (e_2,\{\upsilon''\})\}\widetilde{\sqcup} \{(e_1,\{\upsilon''\}), (e_2,\{\upsilon,\upsilon'\})\}\}=X_\xi$$  and $$\{(e_1,\{\upsilon,\upsilon'\}), (e_2,\{\upsilon''\})\}\widetilde{\sqcap} \{(e_1,\{\upsilon''\}), (e_2,\{\upsilon,\upsilon'\})\}\}=\emptyset_\xi,$$ no element of $X$ belongs to either $\{(e_1,\{\upsilon,\upsilon'\}), (e_2,\{\upsilon''\})\}$ or $\{(e_1,\{\upsilon''\}), (e_2,\{\upsilon,\upsilon'\})\}$. Hence, a partition of the set $X$ cannot be obtained by these soft open sets.}
\end{ex}
Let $(X, \zeta)_\xi $ be a soft topological space and $A\subseteq X$. If the soft subspace $(A, \zeta_A)_\xi$  is soft connected, then $A$ is said to be soft connected.

\begin{thm}
Let $(X, \zeta)_\xi $ be a soft topological space. Then the following are equivalent:
\begin{enumerate}
\item $X$ is soft connected.
\item The only soft absolute subsets are both soft open and soft closed are $\emptyset_\xi$ and $ X_\xi$.
\end{enumerate}
\end{thm}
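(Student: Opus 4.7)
The plan is to prove the two implications by exploiting the observation that for any absolute soft subset $A_\xi$ (with $A(e)=A$ for all $e\in\xi$), its soft complement is $(X\setminus A)_\xi$, which is again an absolute soft subset. This keeps us entirely inside the world of absolute soft subsets, where soft unions, intersections and complements correspond cleanly to ordinary set operations on their underlying subsets of $X$.

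For $(1)\Rightarrow(2)$, I would argue by contrapositive. Suppose there is an absolute soft subset $A_\xi$ different from $\emptyset_\xi$ and $X_\xi$ that is both soft open and soft closed. Since $A\neq\emptyset$ and $A\neq X$, the complement $B=X\setminus A$ is also nonempty, and $A\cup B=X$, $A\cap B=\emptyset$. Because $A_\xi$ is soft closed, its soft complement $A_\xi^c=B_\xi$ is soft open. Thus $A_\xi$ and $B_\xi$ are two soft open absolute soft subsets arising from a set partition of $X$, so by definition $(X,\zeta)_\xi$ is soft disconnected, contradicting (1).

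For $(2)\Rightarrow(1)$, I would again use contraposition. Suppose $(X,\zeta)_\xi$ is soft disconnected. Then there exist nonempty disjoint $A,B\subseteq X$ with $A\cup B=X$ such that both $A_\xi$ and $B_\xi$ are soft open. Using the remark preceding the theorem, $X_\xi=A_\xi\widetilde{\sqcup}B_\xi$ and $A_\xi\widetilde{\sqcap}B_\xi=\emptyset_\xi$, so $B_\xi=A_\xi^c$. Since $B_\xi$ is soft open, $A_\xi$ is soft closed; combined with $A_\xi$ being soft open, and with $A_\xi\neq\emptyset_\xi,X_\xi$ (as $A$ is a proper nonempty subset of $X$), this contradicts (2).

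The main step to be careful about is the verification that soft operations on absolute soft subsets reduce to ordinary set operations, namely $(X\setminus A)_\xi = A_\xi^c$ and the soft union/intersection identities above; once this is in hand, both directions are essentially the classical argument translated verbatim. I do not anticipate a genuine obstacle, only the need to be explicit that one never leaves the class of absolute soft subsets, so that the hypothesis of (2) (which is stated only for absolute soft subsets) can actually be applied.
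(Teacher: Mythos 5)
Your proposal is correct and follows essentially the same route as the paper's proof: in both directions one passes between a nontrivial clopen absolute soft subset $A_\xi$ and the partition $\{A, X\setminus A\}$ via the identity $A_\xi^c=(X\setminus A)_\xi$, arguing by contradiction each time. Your version is if anything slightly more explicit about why soft operations on absolute soft subsets reduce to ordinary set operations, which the paper leaves implicit.
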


\begin{proof}
(1) $\Rightarrow$ (2).
Let $X$ be  soft connected. Suppose that $A_\xi$ is a both soft open and soft closed absolute subset of $X_\xi$ with $\emptyset_\xi \neq A_\xi \neq X_\xi$. Since $A_\xi $ is soft closed, its soft complement $ A_\xi ^{c}$ is also soft open. Then, the pair $ A$ and $ A^{c}$ are two nonempty, disjoint  subsets of $X$ whose union is $X$. This contradicts the soft connectedness of $X$. Therefore, except for $\emptyset_\xi$ and $ X_\xi$, there is no other soft absolute subset that is both soft open and soft closed.

(2) $\Rightarrow$ (1).
Assume that the only soft absolute subsets that are both soft open and soft closed  in $\zeta$ are $\emptyset_\xi$ and $ X_\xi$. If $X$ were soft disconnected,  there would exist absolute soft open sets $A_\xi$ and $B_\xi$ such that $X_\xi = A_\xi \widetilde{\sqcup} B_\xi$.
In this case, $A_\xi$ is soft open, and its soft complement is $B_\xi$, which is also soft open. Hence, $A_\xi$ is both soft open and soft closed, contradicting the assumption. Therefore, $X$ is soft connected.

\end{proof}

\begin{thm}\label{softcontconnectted}
Let \( (X, \zeta)_\xi \) and \( (X', \zeta')_{\xi} \) be soft topological spaces, and let $(\mathbb{I}_\xi,\varrho)\colon (X, \zeta)_\xi \to  (X', \zeta')_\xi $ be a soft continuous mapping.
If $X$ is soft connected, then $\varrho(X)$ is soft connected.
\end{thm}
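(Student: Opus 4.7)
The plan is to argue the contrapositive. Suppose the image $\varrho(X)$, endowed with the soft subspace topology inherited from $(X',\zeta')_\xi$, is soft disconnected. Then there exist nonempty disjoint subsets $A', B' \subseteq \varrho(X)$ with $A' \cup B' = \varrho(X)$ such that the absolute soft subsets $A'_\xi$ and $B'_\xi$ are soft open in the subspace. By the definition of the subspace topology there are $U', V' \in \zeta'$ with $U'(e) \cap \varrho(X) = A'$ and $V'(e) \cap \varrho(X) = B'$ for every $e \in \xi$. Set $A = \varrho^{-1}(A')$ and $B = \varrho^{-1}(B')$; these are nonempty disjoint subsets of $X$ with $A \cup B = X$. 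It therefore suffices to show that the absolute soft subsets $A_\xi$ and $B_\xi$ are soft open in $(X,\zeta)_\xi$, since this immediately contradicts the soft connectedness of $X$.

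I will verify this for $A_\xi$ (the case of $B_\xi$ being symmetric) by exhibiting $A_\xi$ as a soft union of soft open neighborhoods. Fix $x \in A$; then $\varrho(x) \in A' \subseteq U'(e)$ for every $e$, so $\varrho(x) \widetilde{\in} U'$. Applying the neighborhood characterization of soft continuity (Definition~\ref{cont}), there is a soft open neighborhood $W_x$ of $x$ with $\varrho(W_x) \widetilde{\sqsubseteq} U'$, i.e.\ $W_x(e) \subseteq \varrho^{-1}(U'(e))$ for each $e$. Because every value of $\varrho$ lies in $\varrho(X)$, one has $\varrho^{-1}(U'(e)) = \varrho^{-1}(U'(e) \cap \varrho(X)) = \varrho^{-1}(A') = A$, so $W_x \widetilde{\sqsubseteq} A_\xi$ and $x \widetilde{\in} W_x$. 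Consequently $A_\xi = \widetilde{\bigsqcup}_{x \in A} W_x$, since both sides evaluate to $A$ at each parameter $e$; this exhibits $A_\xi$ as soft open.

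The main obstacle is precisely the subtlety pointed out after Proposition~\ref{inversopen}: a soft continuous mapping need not pull back soft open sets to soft open sets, so one cannot simply argue that $A_\xi$ is the inverse soft image of $A'_\xi$ and invoke preservation of openness. The rescue is the \emph{absolute} nature of $A'_\xi$ and $B'_\xi$: the fibrewise inclusion $W_x(e) \subseteq A$ is uniform in $e$, so assembling the pointwise neighborhoods $W_x$ into a soft union recovers exactly the constant soft set $A_\xi$ rather than a fibrewise-varying one. This is the step where the hypothesis that the disconnecting subsets are absolute, combined with the neighborhood form of soft continuity, is essential.
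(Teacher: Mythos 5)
Your proof is correct and follows essentially the same route as the paper's: argue by contradiction, pull the separating absolute soft open sets back through the neighborhood form of soft continuity, and reassemble a soft separation of $X$. You are in fact more careful than the paper at the one delicate point — the paper simply names the neighborhood supplied by soft continuity as $B_\xi=(\varrho^{-1}(A))_\xi$, whereas you correctly observe that continuity only yields $W_x \widetilde{\sqsubseteq} A_\xi$ with $x \widetilde{\in} W_x$, and that one must take the soft union $\widetilde{\bigsqcup}_{x\in A} W_x$ to exhibit $A_\xi$ as soft open.
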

\begin{proof}
 Let $X$ be soft connected. And assume that $(\varrho(X),\zeta'_{\varrho(X)})_\xi$ is a soft disconnected space. Then  there exists a soft absolute subset $A_\xi$  that is both soft open and soft closed (${A}_\xi^c$ is soft open), with $\emptyset_\xi \neq A_\xi \neq {\varrho(X)}_\xi$.  For each $x \in X$, either $\varrho(x) \widetilde{\in} A_\xi$ or $\varrho(x) \widetilde{\in} A_\xi^c$.
 If $\varrho(x) \widetilde{\in} A_\xi$ and $\varrho^{-1}(A) = B$  then since $(\mathbb{I}_\xi,\varrho )$ is soft continuous, there exists a soft open neighborhood  $B_\xi$  of $x$ such that  $$\varrho (x) \widetilde{\in}(\mathbb{I}_\xi,\varrho)( B_\xi)\widetilde{\sqsubseteq}A_\xi.$$
 And similarly, if  $\varrho(x) \widetilde{\in} A_\xi^c$  and $\varrho^{-1}(A^c) = B^c$, then there exists a soft open neighborhood  $ B_\xi^c$  of $x$ such that  $$\varrho (x) \widetilde{\in}(\mathbb{I}_\xi,\varrho)(  B_\xi^c)\widetilde{\sqsubseteq} A_\xi^c.$$
 Therefore, each $x$ belongs to only one of $B_\xi$ or $ B_\xi^c$.
 So, we have $B_\xi\widetilde{\sqcup} B_\xi^c=X_\xi$ and $B_\xi\widetilde{\sqcap} B_\xi^c=\emptyset_\xi$, which is a contradiction. Hence, $\varrho(X)$ is soft connected.

\end{proof}

\section{Soft Path and Soft Path Connectedness }
Let $\xi$ be any parameter set. For every  $a,b \in \mathbb{R}$, let the set-valued functions \(M^a_\xi:\xi \to \mathcal{P}(\mathbb{R})\) be defined as \(M^a_\xi(e) = (a,\infty)\) and \(N^b_\xi:\xi \to \mathcal{P}(\mathbb{R})\) be defined as \(N^b_\xi(e) = (-\infty,b)\), for all $e\in \xi$.
Clearly $M^a_\xi$ and $N^b_\xi$ are soft sets over $\mathbb{R}$. Consider the class of soft sets
\[\mathscr{S}_\xi = \{M^a_\xi \mid a \in \mathbb{R}\} {\bigcup} \{N^b_\xi \mid b \in \mathbb{R}\} ,\]
which forms a soft subbase for a soft topology on $\mathbb{R}$.  Since each open interval $(a,b)$ in $\mathbb{R}$ can be written as $(-\infty,b)\cap(a,\infty)$,
from the soft subbase $\mathscr{S}_\xi $ we can obtain the soft base
\[
\mathscr{B}_\xi  = \{H^{(a,b)}_\xi \mid a,b \in \mathbb{R},\, a<b\},
\]
where $H^{(a,b)}_\xi:\xi \to \mathcal{P}(\mathbb{R})$ is defined by
\(
H^{(a,b)}_\xi(e) = (a,b)
\) for all $e\in \xi$.
From this soft base, we obtain the soft topology
\[\mathscr{U}_\xi = \{H^G_\xi \mid G \subseteq \mathbb{R} \text{ is open in the usual topology}\},\]
where $H^G_\xi:\xi \to \mathcal{P}(\mathbb{R})$ is given by $H^G_\xi(e)=G$.
The soft topology obtained in this way is called the soft usual topology, and the pair $(\mathbb{R},\mathscr{U}_\xi)_\xi=(\mathbb{R},\mathscr{U}_\xi)$ is called the soft usual topological space.

For each parameter $e \in \xi$, the associated topology $\mathscr{U}_{\xi}^e$ is the usual topology on $\mathbb{R}$.
\begin{ex}\em{
 Let $\xi=\{e_1,e_2,e_3\}$. Then the soft usual topology corresponding to $\xi$ is $\mathscr{U}_\xi=\{H^G_\xi \mid G \subseteq \mathbb{R}, \quad G \text{ is open in the usual topology}\}$. If $G=(-3,2)$, then
 $H^G_\xi=H^{(-3,2)}_\xi=\{(e_1,(-3,2)),(e_2,(-3,2)),(e_3,(-3,2))\}$. If $G=\mathbb{R}$, then $H^\mathbb{R}_\xi=\{(e_1,\mathbb{R}),(e_2,\mathbb{R}),(e_3,\mathbb{R})\}$.}
\end{ex}
From the definition of the soft usual topology, one can easily see that  a soft open set in soft usual topology is also an absolute open subset.

\begin{defn}\em{
Let $(X, \zeta)_ \xi$ be a soft topological space and let $(I,(\mathscr{U}_{\xi})_{I})$ be the soft subspace of  $(\mathbb{R},\mathscr{U}_\xi)$ on $I = [0,1] \subseteq \mathbb{R}$. Let
$\gamma \colon I \to X$
be a function. We say that the soft mapping $$\Gamma=(\mathbb{I}_\xi,\gamma) :(I,(\mathscr{U}_{\xi})_{I})\to (X, \zeta)_ \xi$$  is a \textbf{soft path} from $\gamma(0)=x$ and $\gamma(1)=y$, if $\Gamma$ is soft continuous.}
\end{defn}
\begin{rem}\em{
Let us note that, in the definition of the soft path, the parameter sets of  the soft usual topology on $\mathbb{R}$ and the soft subtopology obtained on $I$ are the same as the parameter set of the soft space $(X, \zeta)_\xi$.}
\end{rem}

\begin{ex}\label{exam4.3}\em{
Let $(\mathbb{R},\mathscr{U}_\xi)$ be the soft usual topology with parameter set  $\xi = \{e_1, e_2\}$. Let $\gamma \colon I \to \mathbb{R}$ be the function defined by $\gamma(t)=2t $. Then \[
\Gamma =(\mathbb{I}_\xi,\gamma) \colon (I,(\mathscr{U}_{\xi})_{I})\to(\mathbb{R},\mathscr{U}_\xi)\] is a soft path from $\gamma(0) = 0 $ to $\gamma(1) = 2 $. To see that, it is sufficient to show that $\Gamma$ is soft continuous. Let $t\in I$ be any point. Let us consider the open interval $G_\epsilon=(\gamma(t)-\epsilon,\gamma(t)+\epsilon)=(2t-\epsilon,2t+\epsilon)$. Then, for each $\epsilon>0$, $H^{G_\epsilon}_\xi$ is a soft open set which is a soft open neighborhood of $\gamma(t)$. Now, let $\delta= \frac{\epsilon}{2}$. By the definition of soft subtopology, for every $e\in \xi$, $$H^{(t-\frac{\epsilon}{2},t+\frac{\epsilon}{2})}_\xi(e)\cap I=(t-\frac{\epsilon}{2},t+\frac{\epsilon}{2})\cap I =H^{(t-\frac{\epsilon}{2},t+\frac{\epsilon}{2})\cap I}_\xi(e),$$ and  $$(H^{G_\epsilon}_\xi)_I=H^{(t-\frac{\epsilon}{2},t+\frac{\epsilon}{2})\cap I}_\xi\in (\mathcal{U}_{\xi})_{I}$$  is a soft open neighborhood of $t$. Since $\gamma((t-\frac{\epsilon}{2},t+\frac{\epsilon}{2}))=(\gamma (t-\frac{\epsilon}{2}),\gamma (t+\frac{\epsilon}{2}))=(2t-2\frac{\epsilon}{2},2t+2\frac{\epsilon}{2})=(2t-\epsilon,2t+\epsilon)$, then     $\Gamma (H^{(t-\frac{\epsilon}{2},t+\frac{\epsilon}{2})\cap I}_\xi)\widetilde{\sqsubseteq}(H^{G_\epsilon}_\xi)_I$. Hence, $\Gamma$ is soft continuous.


}
\end{ex}
\begin{ex}\em{
 Let $X = \{v\}$ and $\xi = \{e_1,e_2\}$, and consider the topology $\zeta=\{X_\xi,\emptyset_\xi,\{(e_1,\{v\})\}\}$.
For the function $\gamma:I \to X$ defined by $\gamma(t)=v$ for all $t\in            I $, $\Gamma =(\mathbb{I}_\xi ,\gamma) \colon (I,(\mathscr{U}_{\xi})_{I})\to  (X,\zeta)_\xi$ is a soft path.
Although $U=\{e_1,\{v\}\} \in \zeta$, $\Gamma^{-1}(U)=\{(e_1,I),(e_2,\emptyset)\}$ is not soft open in $(\mathcal{U}_{\xi})_{I}$. Therefore
in a soft path the inverse image of a soft open set is not always soft open.}
\end{ex}
\begin{ex}\em{
Let $X = \{1, 2, 3\}$, and let $\xi = \{e_1, e_2\}$ be a set of parameters. Define a soft topology $\zeta$ over $X$ consisting of the following soft sets:
\[
\zeta = \left\{ \emptyset_\xi, X_\xi, U_1, U_2 \right\},
\]
where the soft  sets $U_1$ and $U_2$ are given as follows:
\begin{align*}
U_1(e_1) &= \{1\},       & U_1(e_2) &= \{2\}, \\
U_2(e_1) &= \{1, 3\},    & U_2(e_2) &= \{2\}.
\end{align*}
Now define the soft maps:
\[
\Gamma_{13} =(\mathbb{I}_\xi , \gamma_{13}) : (I,(\mathscr{U}_{\xi})_{I})\to  (X,\zeta)_\xi,
\] and \[
\Gamma_{22} =(\mathbb{I}_\xi , \gamma_{22}) : (I,(\mathscr{U}_{\xi})_{I})\to  (X,\zeta)_\xi,
\]
with the functions \(\gamma_{13},\gamma_{22}:I \to X \),
\[
\gamma_{13}(t) =
\begin{cases}
1, & 0 \leq t < \frac{1}{2}, \\
3, & \frac{1}{2} \leq t \leq 1,
\end{cases}
\qquad \gamma_{22}(t) = 2 \quad \text{for all } t \in I.
\]
Let us show that $\Gamma_{13}$ and $\Gamma_{22}$ are soft continuous by using Definition \ref{cont}.

If $t\in [0,\frac{1}{2})$, then $\gamma_{13}(t)=1$, and in $\zeta$ the only soft open set containing $1$ is $X_\xi$.  $H^{[0,\frac{1}{2})}_\xi$ is an soft open set in $(\mathscr{U}_{\xi})_{I}$ that consists $t$ and $1 \widetilde{\in}\Gamma_{13}(H^{[0,\frac{1}{2})}_\xi)\widetilde{\sqsubseteq} X_\xi$. Therefore, for every element of $[0,\frac{1}{2})$, $\Gamma_{13}$ is soft continuous. Similarly, it is shown that $\Gamma_{13}$ is soft continuous for every $t \in [\frac{1}{2},1]$. Hence $\Gamma_{13}$ is a soft path.

If $t\in [0,1]$, then $\gamma_{22}(t)=2$, and in $\zeta$ the only open set containing $2$ is $X_\xi$.  $H^{[0,1]}_\xi$ is an soft open set in $(\mathscr{U}_{\xi})_{I}$ that consists $t$ and $2 \widetilde{\in}\Gamma_{22}(H^{[0,1]}_\xi)\widetilde{\sqsubseteq} X_\xi$. Therefore, for every element of $[0,1]$, $\Gamma_{22}$ is soft continuous. Therefore $\Gamma_{22}$ is a soft path.

Now,let us consider the soft open set $U_1$. Then we obtain
\[
\Gamma_{13}^{-1}(U_1) = \{(e_1, \gamma_{13}^{-1}\{1\}), (e_2, \gamma_{13}^{-1}\{2\})\} = \{(e_1, [0,\tfrac{1}{2})), (e_2, \emptyset)\},
\]
and

\[
\Gamma_{22}^{-1}(U_1) = \{(e_1, \gamma_{22}^{-1}\{1\}), (e_2, \gamma_{22}^{-1}\{2\})\} = \{(e_1, \emptyset), (e_2, I)\}.
\]
So, the inverse images of the soft open set $U_1$  are not soft open sets.}
\end{ex}

\begin{defn}\em{
Let $(X,\zeta)_\xi$ be a soft topological space. If for every $x,y \in X$  there exists a soft path \[
 \Gamma =(\mathbb{I}_\xi ,\gamma) \colon (I,(\mathscr{U}_{\xi})_{I})\to  (X,\zeta)_\xi,
\] from $x$ to $y$, then $(X,\zeta)_\xi$ is called a \textbf{soft path connected space}.}
\end{defn}

\begin{ex}\em{According to the soft usual topology defined above, the soft usual topological space
$(\mathbb{R}, \mathscr{U}_\xi)$ is a soft path connected space. Indeed, for every
$a, b \in \mathbb{R}$ there exists a function
\[
\gamma \colon I \to \mathbb{R}, \quad t \mapsto (1-t)a + tb,
\]
from $a$ to $b$. Similar to Example \ref{exam4.3}, it can be shown that \[
\Gamma =(\mathbb{I}_\xi ,\gamma) \colon (I,(\mathscr{U}_{\xi})_{I})\to  (\mathbb{R}, \mathscr{U}_\xi)
\] is a soft path.
Hence, $(\mathbb{R}, \mathscr{U}_\xi)$ is soft path connected.}
\end{ex}

 \begin{prop}\label{restricted}
  Let $(X,\zeta)_\xi$ and $(X',\zeta')_{\xi'}$ be soft topological spaces and  $(\varphi, \varrho): (X,\zeta)_\xi \to (X',\zeta')_{\xi'}$ be a soft continuous mapping. If $A$ is a non-empty subset of $X$ endowed with the soft subspace topology $\zeta_A$ , then  the soft mapping $$(\varphi, \varrho|_A): (A, \zeta_A)_\xi \to (X',\zeta')_{\xi'}$$
is also soft continuous, where $\varrho|_A$ denotes the restriction of $\varrho$ to $A$.
 \end{prop}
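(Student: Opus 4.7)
The plan is to verify soft continuity of $(\varphi, \varrho|_A)$ pointwise, by combining the soft continuity of $(\varphi, \varrho)$ at each $a \in A$ with the definition of the soft subspace topology $\zeta_A$. So I would fix an arbitrary $a \in A$ and an arbitrary soft open neighborhood $W'$ of $\varrho|_A(a) = \varrho(a)$ in $(X', \zeta')_{\xi'}$, and then exhibit a soft open neighborhood of $a$ in $(A, \zeta_A)_\xi$ whose image under $(\varphi, \varrho|_A)$ sits inside $W'$.

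The natural candidate comes from restricting a neighborhood produced in $X$. Since $a \in A \subseteq X$ and $(\varphi, \varrho)$ is soft continuous at $a$, there exists a soft open neighborhood $W \in \zeta$ of $a$ satisfying $\varrho(a) \widetilde{\in} (\varphi, \varrho)(W) \widetilde{\sqsubseteq} W'$. I would then set $W_A$ to be the soft set with $W_A(e) = W(e) \cap A$ for every $e \in \xi$. By the definition of the soft subspace topology, $W_A \in \zeta_A$, and the fact that $a \in W(e)$ for all $e$ together with $a \in A$ immediately gives $a \widetilde{\in} W_A$, so $W_A$ is a soft open neighborhood of $a$ in $(A,\zeta_A)_\xi$.

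It then remains to verify $(\varphi, \varrho|_A)(W_A) \widetilde{\sqsubseteq} W'$. Using the formula for the soft image, for each parameter $d$ in the image one has
\[
(\varphi, \varrho|_A)(W_A)(d) \;=\; \bigcup_{\varphi(e)=d} \varrho|_A(W_A(e)) \;=\; \bigcup_{\varphi(e)=d} \varrho\bigl(W(e)\cap A\bigr) \;\subseteq\; \bigcup_{\varphi(e)=d} \varrho(W(e)) \;=\; (\varphi,\varrho)(W)(d),
\]
so $(\varphi, \varrho|_A)(W_A) \widetilde{\sqsubseteq} (\varphi, \varrho)(W) \widetilde{\sqsubseteq} W'$, completing the verification at $a$. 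Since $a \in A$ was arbitrary, this establishes soft continuity of $(\varphi, \varrho|_A)$.

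I expect no genuine obstacle here; the statement is essentially bookkeeping about the soft subspace topology and the soft image. The one point that needs mild care is making sure the parameter side of the map really behaves correctly, namely that working with the same parameter map $\varphi$ on the restricted domain $A$ is legitimate (this is where the hypothesis that $A$ is endowed with $\zeta_A$ over the same parameter set $\xi$ is used), and that $W_A \widetilde{\sqsubseteq} W$ as soft sets transports through the soft image inclusion above. Once these points are pinned down, the rest of the argument is a direct unraveling of definitions.
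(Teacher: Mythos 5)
Your proposal is correct and follows essentially the same route as the paper: take the soft open neighborhood $W$ of $a$ produced by the soft continuity of $(\varphi,\varrho)$, restrict it to $A$ to get $W_A\in\zeta_A$, and observe that its image still lands in $W'$. The only difference is that you spell out the soft-image computation and the membership $a\,\widetilde{\in}\,W_A$ explicitly, which the paper leaves implicit.
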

\begin{proof}
 Let $a\in A \subset X$ and  $\varrho(a)\widetilde{\in}U'$ be an arbitrary soft open set in $\zeta'$. Since $(\varphi, \varrho)$ is soft continuous, there is a soft open  neighborhood \( U \) of \( a \) such that
 \((\varphi, \varrho)(U) \widetilde{\sqsubseteq} U'\). By the definition of the soft subspace topology, $(U _ A) \in \zeta_{A}$, and hence $(\varphi, \varrho)(U_ A) \widetilde{\sqsubseteq} U'$. So $(\varphi, \varrho|_A)$ is soft continuous.
\end{proof}

\begin{thm}\label{preservepathcon}
Let $(X,\zeta)_\xi$ and $(X',\zeta')_{\xi}$ be soft topological spaces, and $A\subseteq X$.
If $(\mathbb{I}_\xi,\varrho):(X,\zeta)_\xi\to (X',\zeta')_{\xi}$ is a soft continuous function and the soft  subspace $(A,\zeta_A)_\xi$ is soft path connected, then the soft subspace $(\varrho(A),\zeta'_{\varrho(A)})_{\xi}$ is also soft path connected.
\end{thm}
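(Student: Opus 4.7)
The plan is to construct an explicit soft path between arbitrary points of $\varrho(A)$ by lifting them to $A$, using the soft path connectedness of $A$, and then pushing the resulting soft path forward through the (co)restriction of $\varrho$.

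First I would pick arbitrary points $x', y' \in \varrho(A)$ and choose preimages $x, y \in A$ with $\varrho(x) = x'$ and $\varrho(y) = y'$. Since $(A, \zeta_A)_\xi$ is soft path connected, there exists a soft path $\Gamma = (\mathbb{I}_\xi, \gamma) : (I, (\mathscr{U}_\xi)_I) \to (A, \zeta_A)_\xi$ with $\gamma(0) = x$ and $\gamma(1) = y$. Next, by Proposition \ref{restricted}, the restriction $(\mathbb{I}_\xi, \varrho|_A) : (A, \zeta_A)_\xi \to (X', \zeta')_\xi$ is soft continuous. Since $\varrho|_A$ takes values in $\varrho(A)$, I would co-restrict its codomain to the soft subspace $(\varrho(A), \zeta'_{\varrho(A)})_\xi$ and verify that this co-restriction remains soft continuous: given $a \in A$ and a soft open neighborhood $V_{\varrho(A)}$ of $\varrho(a)$ in $\zeta'_{\varrho(A)}$, there is by definition $V \in \zeta'$ with $V_{\varrho(A)} = V \cap \varrho(A)_\xi$; soft continuity of $(\mathbb{I}_\xi, \varrho|_A)$ into $X'$ yields a soft open neighborhood $U$ of $a$ in $\zeta_A$ with $\varrho|_A(U) \widetilde{\sqsubseteq} V$, and because $\varrho|_A(U) \widetilde{\sqsubseteq} \varrho(A)_\xi$ automatically, one obtains $\varrho|_A(U) \widetilde{\sqsubseteq} V_{\varrho(A)}$.

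Finally, I would form the composition
\[
\Delta = (\mathbb{I}_\xi, \varrho|_A \circ \gamma) : (I, (\mathscr{U}_\xi)_I) \longrightarrow (\varrho(A), \zeta'_{\varrho(A)})_\xi,
\]
and show that the composition of two soft continuous mappings is soft continuous directly from Definition \ref{cont}: given $t \in I$ and a soft open neighborhood $V$ of $\varrho(\gamma(t))$ in $\zeta'_{\varrho(A)}$, the co-restricted soft continuity of $\varrho|_A$ at $\gamma(t)$ supplies a soft open neighborhood $U$ of $\gamma(t)$ in $\zeta_A$ with $\varrho|_A(U) \widetilde{\sqsubseteq} V$, and soft continuity of $\Gamma$ at $t$ then supplies a soft open neighborhood $W$ of $t$ in $(\mathscr{U}_\xi)_I$ with $\gamma(W) \widetilde{\sqsubseteq} U$, giving $\varrho|_A(\gamma(W)) \widetilde{\sqsubseteq} V$ as required. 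Then $\Delta$ is a soft path from $x'$ to $y'$ in $\varrho(A)$, completing the proof.

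I expect the only delicate point to be the co-restriction step: Proposition \ref{restricted} handles domain restriction but not codomain restriction, so one must argue separately that landing in a subspace preserves soft continuity. This is routine from the pointwise definition, since soft neighborhoods in the subspace are precisely traces of soft neighborhoods from the ambient space, but it is the one place where invoking earlier machinery alone is not quite enough and one has to unfold Definition \ref{cont} directly.
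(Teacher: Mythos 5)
Your proposal is correct and follows essentially the same route as the paper: lift the two points to $A$, take a soft path there, and push it forward through $(\mathbb{I}_\xi,\varrho|_A)$ using Proposition \ref{restricted} and closure of soft continuity under composition. The only difference is that you explicitly justify the co-restriction of the codomain to $(\varrho(A),\zeta'_{\varrho(A)})_\xi$ and unfold the composition argument from Definition \ref{cont}, both of which the paper's proof asserts without detail, so your version is if anything slightly more complete.
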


\begin{proof}
Let $b_1, b_2 \in \varrho(A)$. Then there exist $a_1, a_2 \in A$ such that $\varrho(a_1)=b_1$ and $\varrho(a_2)=b_2$.
Since $(A,\zeta_A)_\xi$ is soft path connected, there exists a soft path
\[
\Gamma =(\mathbb{I}_\xi ,\gamma) \colon (I,(\mathscr{U}_{\xi})_{I})\to  (A,\zeta_A)_\xi,
\]
where
\[
\gamma: I \to A, \quad \gamma(0)=a_1, \ \gamma(1)=a_2.
\]

Now consider the composition
\[
(I,(\mathscr{U}_{\xi})_{I}) \;\xrightarrow{\;\Gamma\;}\; (A,\zeta_A)_\xi \;\xrightarrow{\;(\mathbb{I}_\xi, \varrho|_A)\;}\;(\varrho(A),\zeta'_{\varrho(A)})_{\xi}.
\]
This yields the soft path
\[
(\mathbb{I}_\xi, \varrho|_A)\circ\Gamma = (\mathbb{I}_\xi , (\varrho|_A \circ \gamma) ) \colon (I,(\mathscr{U}_{\xi})_{I}) \to (\varrho(A),\zeta'_{\varrho(A)})_{\xi},
\]
where \[\varrho|_A\circ \gamma: I \to \varrho(A)\] is a function with $(\varrho|_A\circ\gamma)(0)=\varrho(a_1)=b_1$ and $(\varrho|_A\circ\gamma)(1)=\varrho(a_2)=b_2$.

Since both $\Gamma$ and $(\mathbb{I}_\xi, \varrho|_A)$ are soft continuous, their composition is also soft continuous.
Therefore, there exists a soft path in $\varrho(A)$ from $b_1$ to $b_2$.

Hence $(\varrho(A),\zeta'_{\varrho(A)})_{\xi}$ is soft path connected.
\end{proof}
\begin{prop}
 Let $A\subset \mathbb{R}$ be a proper subset, and let $\xi$ be any parameter set. If $A$ is soft connected in the soft usual topological space $(\mathbb{R},\mathscr{U}_\xi)$, then $A$ is an interval.
\end{prop}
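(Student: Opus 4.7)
The plan is to argue by contraposition: assuming $A$ is not an interval, I will exhibit an explicit soft disconnection of the soft subspace $(A,(\mathscr{U}_\xi)_A)_\xi$.

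First I would unwind what it means for $A\subseteq\mathbb{R}$ to fail to be an interval: there exist $a,b\in A$ with $a<b$ and some $c\in(a,b)$ with $c\notin A$. Using this $c$ I would set
\[
A_1 \;=\; A\cap(-\infty,c), \qquad A_2 \;=\; A\cap(c,\infty).
\]
These are nonempty (they contain $a$ and $b$ respectively), disjoint, and satisfy $A_1\cup A_2=A$ because $c\notin A$. So we already have a set-theoretic partition of $A$ of the type that the definition of soft disconnectedness demands.

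Next I would verify that the absolute soft subsets $(A_1)_\xi$ and $(A_2)_\xi$ are soft open in the soft subspace topology $(\mathscr{U}_\xi)_A$. The intervals $(-\infty,c)$ and $(c,\infty)$ are open in the usual topology on $\mathbb{R}$, so by the construction of $\mathscr{U}_\xi$ the soft sets $H^{(-\infty,c)}_\xi$ and $H^{(c,\infty)}_\xi$ belong to $\mathscr{U}_\xi$. Restricting to $A$ gives, for every $e\in\xi$,
\[
\bigl(H^{(-\infty,c)}_\xi\bigr)_A(e) \;=\; (-\infty,c)\cap A \;=\; A_1, \qquad \bigl(H^{(c,\infty)}_\xi\bigr)_A(e) \;=\; (c,\infty)\cap A \;=\; A_2,
\]
so these restricted soft sets coincide with the absolute soft subsets $(A_1)_\xi$ and $(A_2)_\xi$. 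By the definition of the soft subspace topology, both are soft open in $(\mathscr{U}_\xi)_A$. This directly contradicts soft connectedness of $A$, completing the contrapositive.

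I do not expect a serious obstacle here; the only point that needs mild care is the translation between the defining soft subspace formula $W_A(e)=W(e)\cap A$ and the constant-valued absolute soft subsets $(A_i)_\xi$, which is precisely what makes the soft usual topology a faithful bridge to the classical one. The proof relies crucially on the feature (pointed out in the paper right after the soft usual topology is defined) that every soft open set in $\mathscr{U}_\xi$ is an absolute soft set arising from a usual open set, so the classical disconnection of a non-interval transfers verbatim to the soft setting.
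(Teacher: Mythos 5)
Your proposal is correct and follows essentially the same route as the paper's own proof: pick a point $c\notin A$ lying between two points of $A$, cut $A$ by the soft open sets coming from $(-\infty,c)$ and $(c,\infty)$, and observe that their restrictions to $A$ are absolute soft open subsets forming a partition. If anything, your version is slightly more careful than the paper's, since you explicitly choose $c$ strictly between two elements of $A$ to guarantee that both pieces of the partition are nonempty, a point the paper leaves implicit.
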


\begin{proof}
 Suppose that $A$ is not an interval. Now, consider the soft open sets $$U_\xi=H^{(-\infty,x)\cap A}_\xi \quad  \text{and} \quad V_\xi=H^{(x,\infty)\cap A}_\xi$$ in soft subspace $ (\mathscr{U}_{\xi})_{A}$ where $x\in \mathbb{R}$ and $x\notin A$.
Then, we obtain \(U_\xi\widetilde{\sqcup} V_\xi=A_\xi\), \(U_\xi\widetilde{\sqcap} V_\xi=\emptyset_\xi\),  which implies that $A$ is soft disconnected. This contradicts the assumption that \(A\) is not an interval, so \(A\) is an interval.
\end{proof}

\begin{thm}
Let $(X,\zeta)_\xi$ be a soft connected topological space, and let $$(\mathbb{I}_\xi,\varrho)\colon (X,\zeta)_\xi\to (\mathbb{R}, \mathscr{U}_\xi) $$ be a soft continuous mapping. Let $a, b \in \varrho(\mathbb{R})$. Then the function $\varrho$ takes all values between $a$ and $b$.
\end{thm}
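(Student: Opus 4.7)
The plan is to recognize this as a soft-topological analogue of the classical Intermediate Value Theorem, and to assemble it directly from the two results that immediately precede it. First I would observe that by Theorem \ref{softcontconnectted}, since $(X,\zeta)_\xi$ is soft connected and $(\mathbb{I}_\xi,\varrho)$ is soft continuous, the image $\varrho(X)$ is soft connected as a soft subspace of $(\mathbb{R},\mathscr{U}_\xi)$.

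Next, I would apply the preceding proposition, which asserts that every soft connected subset of $(\mathbb{R},\mathscr{U}_\xi)$ is an interval. This forces $\varrho(X)$ to be an interval of $\mathbb{R}$ in the classical sense. (If $\varrho(X)$ happens to equal $\mathbb{R}$, the conclusion is trivial; otherwise the cited proposition applies directly, since it was stated for proper subsets.) Interpreting the $a,b \in \varrho(\mathbb{R})$ in the statement as $a,b \in \varrho(X)$, the interval property of $\varrho(X)$ guarantees that for every $c$ strictly between $a$ and $b$ one has $c \in \varrho(X)$, i.e.\ there exists $x \in X$ with $\varrho(x)=c$.

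I expect no serious obstacle: the whole argument is a two-line application of the theorems just proved, mirroring the classical derivation of the IVT from connectedness plus the characterization of connected subsets of $\mathbb{R}$ as intervals. The only point to be careful about is notational, namely verifying that the hypothesis $a,b \in \varrho(X)$ is what is actually meant, and that the ``in between'' values include any $c$ with either $a<c<b$ or $b<c<a$, both of which fall within an interval containing $a$ and $b$.
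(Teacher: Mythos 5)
Your proposal is correct and follows exactly the paper's own argument: apply Theorem \ref{softcontconnectted} to get that $\varrho(X)$ is soft connected, invoke the preceding proposition (handling the onto case separately, since that proposition is stated for proper subsets) to conclude $\varrho(X)$ is an interval, and deduce the intermediate value property. Your remark that the hypothesis should read $a,b\in\varrho(X)$ rather than $a,b\in\varrho(\mathbb{R})$ is a fair observation about a typo in the statement, but does not change the argument.
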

 \begin{proof}
Since $X$ is soft connected, $\varrho(X)$ is also soft connected. If the function $\varrho$ is onto, the proof is complete. If $\varrho$ is not onto, then $\varrho(X)$ is an interval. Hence, for each $c\in [a , b]\subseteq \varrho(X)$, there exists an $x\in X$ such that $\varrho(x) = c$.

 \end{proof}

 \begin{prop}
 The soft usual topological space  $(\mathbb{R},\mathscr{U}_\xi)$  is soft connected.
 \end{prop}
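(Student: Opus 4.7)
The plan is to reduce this statement directly to the classical connectedness of $\mathbb{R}$ in the usual topology, by exploiting the very explicit description of the members of $\mathscr{U}_\xi$ given just before the proposition.

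I would argue by contradiction. Assume that $(\mathbb{R},\mathscr{U}_\xi)$ is soft disconnected. Then, by the definition of soft disconnectedness, there exist nonempty disjoint subsets $A,B\subseteq \mathbb{R}$ with $A\cup B=\mathbb{R}$ such that the absolute soft subsets $A_\xi$ and $B_\xi$ are both soft open in $\mathscr{U}_\xi$.

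The key translation step is to observe that every soft open set of $\mathscr{U}_\xi$ is of the form $H^G_\xi$ for some $G\subseteq\mathbb{R}$ that is open in the usual topology, and that $H^G_\xi(e)=G$ for every $e\in\xi$. Since $A_\xi$ is an absolute soft subset, $A_\xi(e)=A$ for every $e\in\xi$, so $A_\xi=H^G_\xi$ forces $A=G$; thus $A$ is open in the usual topology of $\mathbb{R}$. The same argument applied to $B_\xi$ yields that $B$ is also open in the usual topology. Consequently $\{A,B\}$ would be a partition of $\mathbb{R}$ into two disjoint nonempty open sets, contradicting the classical connectedness of $(\mathbb{R},\mathscr{U})$ in the usual topology. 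Hence no such pair $A,B$ exists, and $(\mathbb{R},\mathscr{U}_\xi)$ is soft connected.

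There is no real obstacle here beyond being careful about the translation between the soft-open condition and ordinary openness; the whole proof is essentially the observation that, because soft open sets in $\mathscr{U}_\xi$ are ``constant on the parameter set,'' soft disconnectedness of the absolute space $(\mathbb{R},\mathscr{U}_\xi)$ collapses to ordinary disconnectedness of $\mathbb{R}$, which is known to fail.
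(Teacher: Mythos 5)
Your proof is correct and follows essentially the same route as the paper's: both reduce soft disconnectedness of $(\mathbb{R},\mathscr{U}_\xi)$ to an ordinary disconnection of $\mathbb{R}$ by observing that every soft open set in $\mathscr{U}_\xi$ has the form $H^G_\xi$ with $G$ open in the usual topology. If anything, your version is slightly more careful, since it starts from the paper's literal definition of soft disconnectedness (a set partition $A\cup B=\mathbb{R}$ with the absolute soft subsets $A_\xi,B_\xi$ soft open) and makes the identification $A_\xi=H^G_\xi\Rightarrow A=G$ explicit.
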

 \begin{proof}
 Suppose that $\mathbb{R}$ is soft disconnected. Then there exist soft open sets $H^G_\xi$ and $H^{G'}_\xi$ such that $H^G_\xi\widetilde{\sqcup}H^{G'}_\xi= H^{G\cup G'}_\xi=\mathbb{R}_\xi$ and $H^G_\xi\widetilde{\sqcap}H^{G'}_\xi =H^{G\cap G'}_\xi=\emptyset_\xi$. From this, for the open sets $G$ and $G'$ of the usual topology on $\mathbb{R}$, $G\cup G'=\mathbb{R}$ and $G\cap G'=\emptyset$, which contradicts the connectedness of $\mathbb{R}$ with respect to the usual topology. Hence, $\mathbb{R}$ is soft connected.

 \end{proof}

 \begin{prop}
 $(I,(\mathscr{U}_\xi)_I)$ is a soft connected space.
\end{prop}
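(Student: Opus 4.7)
The plan is to mimic the proof of the preceding proposition for $(\mathbb{R},\mathscr{U}_\xi)$, reducing soft disconnectedness of $I$ to classical disconnectedness of $[0,1]$ in the usual topology. The key structural fact used is that every soft open set in $\mathscr{U}_\xi$ has the parameter-independent form $H^G_\xi$ with $G\subseteq\mathbb{R}$ open in the usual topology, and consequently every soft open set in $(\mathscr{U}_\xi)_I$ has the form $H^G_\xi\cap I$ determined by a usual-topology open set $G$.

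I would argue by contradiction. Suppose $(I,(\mathscr{U}_\xi)_I)$ is soft disconnected. Then by the definition of soft disconnectedness there exist nonempty disjoint subsets $A,B\subseteq I$ with $A\cup B=I$ such that the absolute soft subsets $A_\xi$ and $B_\xi$ are both soft open in $(\mathscr{U}_\xi)_I$. By definition of the soft subspace topology, there exist $W, W'\in\mathscr{U}_\xi$ with $A_\xi=W_I$ and $B_\xi=W'_I$. Since every element of $\mathscr{U}_\xi$ is of the form $H^G_\xi$ for some $G\subseteq\mathbb{R}$ open in the usual topology, we may write $W=H^G_\xi$ and $W'=H^{G'}_\xi$, and then for each $e\in\xi$
\[
A = A_\xi(e) = H^G_\xi(e)\cap I = G\cap I, \qquad B = B_\xi(e) = G'\cap I.
\]

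From this, $A$ and $B$ are nonempty, disjoint, relatively open subsets of $I$ whose union is $I$ (in the usual topology), which contradicts the classical connectedness of the interval $[0,1]$. Hence no such disconnection exists, and $(I,(\mathscr{U}_\xi)_I)$ is soft connected.

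I do not anticipate a real obstacle: the argument is essentially a transfer of the classical result, parallel to the preceding proof for $(\mathbb{R},\mathscr{U}_\xi)$. The only point requiring a little care is the bookkeeping that soft-open absolute subsets of $I_\xi$ in the subspace topology correspond, parameter by parameter, to the same ordinary open trace $G\cap I$, which is exactly what makes the reduction to classical connectedness of $[0,1]$ work.
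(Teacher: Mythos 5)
Your proposal is correct and follows essentially the same route as the paper: both reduce soft disconnectedness of $(I,(\mathscr{U}_\xi)_I)$ to a classical disconnection of $[0,1]$ by using the fact that every soft open set in the subspace topology is of the parameter-independent form $H^{G\cap I}_\xi$ for some usual-open $G\subseteq\mathbb{R}$. Your version is, if anything, slightly more explicit in tying the soft open pair back to the set partition $A\cup B=I$ required by the definition of soft disconnectedness, but the substance is identical.
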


\begin{proof}
Suppose that $(I,(\mathscr{U}_\xi)_I)$  is soft disconnected. Then there exist  soft open sets $H^{G\cap I}_\xi$ and $H^{G'\cap I}_\xi$ in $(\mathscr{U}_\xi)_I$ such that
\[
H^{G\cap I}_\xi \widetilde{\sqcup}\ H^{G'\cap I}_\xi=I_\xi\quad \text{and} \quad H^{G\cap I}_\xi\widetilde{\sqcap} H^{G'\cap I}_\xi=\emptyset_\xi.
\]
$G\cap I$ and $G'\cap I$ are disjoint open sets in the usual topology of $I$ and $(G\cap I)\cup (G'\cap I) = I$.
This implies that $I$ is disconnected with respect to the usual topology, which is a contradiction. Therefore, $(I,(\mathscr{U}_\xi)_I)$ is a soft connected space.

\end{proof}

\begin{thm}
If $(X,\zeta)_\xi$ is a soft path connected space, then $(X,\zeta)_\xi$ is a soft connected space.
\end{thm}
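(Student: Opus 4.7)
The plan is to mimic the classical topology proof: assume for contradiction that $X$ is soft disconnected, then produce a soft path whose image forces a contradiction via Theorem \ref{softcontconnectted} and the soft connectedness of $(I,(\mathscr{U}_\xi)_I)$, which was established in the preceding proposition.

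First, suppose $(X,\zeta)_\xi$ is not soft connected. By the definition of soft disconnectedness, there exist nonempty disjoint subsets $A,B\subseteq X$ with $A\cup B = X$ such that the absolute soft subsets $A_\xi$ and $B_\xi$ are both soft open in $\zeta$. Pick any $x\in A$ and $y\in B$. Since $(X,\zeta)_\xi$ is soft path connected, there exists a soft path
\[
\Gamma=(\mathbb{I}_\xi,\gamma)\colon (I,(\mathscr{U}_{\xi})_{I})\to (X,\zeta)_\xi
\]
with $\gamma(0)=x$ and $\gamma(1)=y$.

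Next, I would apply Theorem \ref{softcontconnectted} to $\Gamma$. Since $(I,(\mathscr{U}_\xi)_I)$ has just been shown to be soft connected and $\Gamma$ is soft continuous, the image subspace $(\gamma(I),\zeta_{\gamma(I)})_\xi$ must be soft connected. I would then derive a contradiction by exhibiting an explicit soft disconnection of $\gamma(I)$. Set $A'=A\cap \gamma(I)$ and $B'=B\cap \gamma(I)$. Because $x=\gamma(0)\in A'$ and $y=\gamma(1)\in B'$, both $A'$ and $B'$ are nonempty; they are disjoint and $A'\cup B'=\gamma(I)$. By the definition of the soft subspace topology, the soft sets $(A_\xi)_{\gamma(I)}$ and $(B_\xi)_{\gamma(I)}$ given by $e\mapsto A\cap\gamma(I)=A'$ and $e\mapsto B\cap\gamma(I)=B'$ are soft open in $\zeta_{\gamma(I)}$, and they are exactly the absolute soft subsets $A'_\xi$ and $B'_\xi$ of $\gamma(I)_\xi$. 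Hence $(\gamma(I),\zeta_{\gamma(I)})_\xi$ is soft disconnected, contradicting the soft connectedness inherited from $I$ via Theorem \ref{softcontconnectted}.

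The main subtlety, and what I would check most carefully, is the compatibility between the set-theoretic decomposition $\gamma(I)=A'\cup B'$ and the soft-theoretic requirement that $A'_\xi$ and $B'_\xi$ be \emph{absolute} soft open subsets of the subspace. This is where the particular form of the definition of soft disconnectedness used in this paper is essential: since $A_\xi$ and $B_\xi$ are absolute (the constant set-valued maps $e\mapsto A$ and $e\mapsto B$), their traces on $\gamma(I)$ remain constant in $e$, so the induced soft sets $(A_\xi)_{\gamma(I)}$ and $(B_\xi)_{\gamma(I)}$ are again absolute and soft open in $\zeta_{\gamma(I)}$. Once this point is verified, the contradiction is immediate and the theorem follows.
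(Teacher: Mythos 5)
Your proposal is correct and takes essentially the same route as the paper's own proof: assume a soft disconnection $X=A\cup B$ by absolute soft open subsets, take a soft path $\Gamma$ from a point of $A$ to a point of $B$, invoke the soft connectedness of $(I,(\mathscr{U}_\xi)_I)$ together with Theorem \ref{softcontconnectted} to conclude $\gamma(I)$ is soft connected, and then contradict this by restricting $A_\xi$ and $B_\xi$ to $\gamma(I)$. Your explicit verification that the traces $(A_\xi)_{\gamma(I)}$ and $(B_\xi)_{\gamma(I)}$ remain absolute soft open subsets is a point the paper passes over silently, but it is the same argument.
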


\begin{proof}
Assume that $X$ is soft disconnected, then there exist absolute soft open subsets such that  \(
 A_\xi \widetilde{\sqcup} B_\xi=X_\xi \) and  \(A_\xi\widetilde{\sqcap} B_\xi=\emptyset_\xi\).
 Choose $x\in A$ and $y\in B$  and let \[
\Gamma=(\mathbb{I}_\xi, \gamma) :(I,(\mathscr{U}_\xi)_I)\to (X,\zeta)_\xi
\] be a soft path from $x$ to $y$. Since $(\mathbb{I}_\xi, \gamma)$ is soft continuous, $\gamma(I)$ is soft connected; however, $$(A_\xi\widetilde{\sqcap}\gamma(I)_\xi)\widetilde{\sqcup}(B_\xi\widetilde{\sqcap}\gamma(I)_\xi)=\gamma(I)_\xi \quad \text{and} \quad (A_\xi\widetilde{\sqcap}\gamma(I)_\xi)\widetilde{\sqcap}(B_\xi\widetilde{\sqcap}\gamma(I)_\xi)=\emptyset_\xi $$ implies that $\gamma(I)$ is soft disconnected, which is a contradiction. Thus,
$(X,\zeta)_\xi$ is a  connected space.
\end{proof}

\section{ Soft Topological Groups}
Now we will investigate the concepts of soft connectedness and soft path connectedness in the context of soft topological groups.

\begin{defn}\em{
Let $ (G,\zeta)_\xi$  be a soft topological group. Then $ (G,\zeta)_\xi$ is called a
soft connected topological group if the underlying soft topological space
$ (G,\zeta)_\xi$ is soft connected.}
\end{defn}

\begin{thm}\label{grpoperationcont}Let $(G,\zeta)_\xi$ be a soft topological group. Then the soft group operation
\[\ast\colon G\times G\to G, (a,b)\mapsto a\ast b\]
is soft continuous.
\end{thm}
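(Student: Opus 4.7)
The plan is to recognize that Theorem 5.2 is essentially a reformulation of the defining axiom of a soft topological group, so the proof should amount to unpacking the neighborhood form of soft continuity on the soft product space and then invoking the axiom directly. I would fix an arbitrary point $(a,b)\in G\times G$ together with an arbitrary soft open neighborhood $W$ of $a\ast b$ in $(G,\zeta)_\xi$, and apply the soft topological group axiom to obtain soft open neighborhoods $U$ of $a$ and $V$ of $b$ satisfying $U\ast V \widetilde{\sqsubseteq} W$.

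Next I would produce the candidate basic soft open set $U \widetilde{\times} V$ in the product soft topology $\zeta_\times$ on $G\times G$. By the definition of the soft product space, this is a soft open neighborhood of $(a,b)$ in $(G\times G,\zeta_\times)$. It then remains to verify that the soft image of $U\widetilde{\times} V$ under $\ast$ is soft contained in $W$; at the level of underlying sets this says that $\ast$ sends the Cartesian product $U(e)\times V(e)$ into $W(e)$ for each parameter $e$, which is precisely the pointwise content of the soft inclusion $U\ast V \widetilde{\sqsubseteq} W$ already granted by the group axiom. Since $(a,b)$ was arbitrary, this delivers soft continuity of $\ast$ at every point of $G\times G$.

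The main obstacle I expect is the parameter-set bookkeeping, since the product space $(G\times G,\zeta_\times)$ carries the parameter set $\xi\times\xi$ while the codomain $(G,\zeta)_\xi$ carries $\xi$. To phrase soft continuity of $\ast$ as a single soft mapping one must fix a parameter map $\varphi\colon \xi\times\xi \to \xi$ (the natural choice being compatible with the diagonal $e\mapsto (e,e)$, so that the inclusion $U(e)\ast V(e)\subseteq W(e)$ is what gets invoked) and verify that $(\varphi,\ast)$ is a legitimate soft mapping in the sense of the definitions from Section~2. Once this bookkeeping is in place, no further topological work is needed: the substantive content of the theorem is entirely supplied by the defining axiom of a soft topological group, and the argument reduces to a direct translation between the two equivalent formulations of the same neighborhood condition.
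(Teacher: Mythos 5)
Your proposal follows essentially the same route as the paper's proof: fix $(a,b)$ and a soft open neighborhood $W$ of $a\ast b$, invoke the defining axiom to get $U$ and $V$ with $U\ast V\widetilde{\sqsubseteq}W$, and observe that $U\widetilde{\times}V$ is a soft open neighborhood of $(a,b)$ in $\zeta_\times$ whose image under $\ast$ lies in $W$. Your remark about fixing a parameter map $\xi\times\xi\to\xi$ is a point the paper's proof leaves implicit, so your version is, if anything, slightly more careful.
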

\begin{proof} Suppose that  $(G,\zeta)_\xi$ is a soft topological group.  Then for every  $(a,b) \in G \times G $  and for every soft neighborhood  $W$ of $a \ast b$, there are soft open neighborhoods  $U$ of $a$  and  $V$  of  $b$ such that  $U \ast V  \widetilde{\sqsubset} W$. In the soft product space $(G\times G,\zeta_\times)_{\xi\times \xi}$,  $U\widetilde{\times} V$
 is a soft open neighborhood of $(a,b)$ and $\ast(U\widetilde{\times} V)=U \ast V  \widetilde{\sqsubseteq} W$. It shows that $\ast$ is soft continuous.
 \end{proof}
\begin{prop}\label{softsubgrp}
 Let $(G,\zeta)_\xi$ be a soft topological group.  If $H$ is a subgroup of $G$, then \((  H,\zeta_H)_\xi \) is a soft topological group.
\end{prop}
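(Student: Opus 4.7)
The plan is to verify the criterion of Proposition~\ref{topgroup} for the pair $(H,\zeta_H)_\xi$. That is, for arbitrary $a,b\in H$ and any soft open set $W\in\zeta_H$ with $a\ast b^{-1}\widetilde{\in} W$, we must exhibit soft open neighborhoods $U$ of $a$ and $V$ of $b$ in $\zeta_H$ such that $U\ast V^{-1}\widetilde{\sqsubseteq} W$. Since $H$ is a subgroup of $G$, the element $a\ast b^{-1}$ lies in $H$, so this statement is well-posed.

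First, by the very definition of the soft subspace topology, any $W\in\zeta_H$ has the form $W=W'_H$ for some $W'\in\zeta$, where $W'_H(e)=W'(e)\cap H$ for every $e\in\xi$. From $a\ast b^{-1}\widetilde{\in} W$ we obtain $a\ast b^{-1}\in W'(e)\cap H\subseteq W'(e)$ for every $e\in\xi$, so $a\ast b^{-1}\widetilde{\in} W'$ as a soft open set in $\zeta$. Now apply Proposition~\ref{topgroup} to the soft topological group $(G,\zeta)_\xi$: there exist soft open sets $U',V'\in\zeta$ with $a\widetilde{\in} U'$, $b\widetilde{\in} V'$ and $U'\ast (V')^{-1}\widetilde{\sqsubseteq} W'$.

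Next, set $U:=U'_H$ and $V:=V'_H$, which are soft open in $\zeta_H$. Since $a\in H$ and $a\in U'(e)$ for every $e\in\xi$, we have $a\in U'(e)\cap H=U(e)$, so $a\widetilde{\in} U$; similarly $b\widetilde{\in} V$. It remains to check the soft inclusion $U\ast V^{-1}\widetilde{\sqsubseteq} W$. For each $e\in\xi$,
\[
(U\ast V^{-1})(e)=\bigl(U'(e)\cap H\bigr)\ast\bigl(V'(e)\cap H\bigr)^{-1}\subseteq \bigl(U'(e)\ast V'(e)^{-1}\bigr)\cap H\subseteq W'(e)\cap H=W(e),
\]
where the last containment uses $U'\ast(V')^{-1}\widetilde{\sqsubseteq} W'$ and the closure of $H$ under $\ast$ and inversion. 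Hence $U\ast V^{-1}\widetilde{\sqsubseteq} W$, and Proposition~\ref{topgroup} gives that $(H,\zeta_H)_\xi$ is a soft topological group.

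I do not anticipate a serious obstacle: the argument is really just transport of the group-operation criterion through the subspace topology, and the only thing one must be careful about is interpreting the soft inclusions pointwise in the parameter and exploiting that $H$ is closed under the group operations so that intersecting with $H$ on both sides is harmless. An alternative formulation would be to verify the multiplication and inversion continuity separately using Theorem~\ref{grpoperationcont} together with Proposition~\ref{restricted} applied to the restrictions $\ast|_{H\times H}$ and $\iota|_H$, but the above direct verification via Proposition~\ref{topgroup} is the cleanest route.
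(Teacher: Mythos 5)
Your proof is correct and follows essentially the same route as the paper: both verify the $a\ast b^{-1}$ criterion of Proposition~\ref{topgroup} by lifting the soft open set to $\zeta$, applying the criterion in $(G,\zeta)_\xi$, and restricting the resulting neighborhoods back to $H$. You are somewhat more careful than the paper in spelling out the parameterwise containment $\bigl(U'(e)\cap H\bigr)\ast\bigl(V'(e)\cap H\bigr)^{-1}\subseteq\bigl(U'(e)\ast V'(e)^{-1}\bigr)\cap H$, which is exactly the point where the subgroup hypothesis is used and which the paper leaves implicit.
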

\begin{proof}
Let \(H \)  be a subgroup of the soft topological group $(G,\zeta)_\xi$ equipped with the soft subspace topology.
 Since $H$ is a subgroup, for every $x,y\in H$, $xy^{-1}\in H\subseteq G$. And since $G$ is a soft topological group, for every \( x, y \in G \) and for every soft open set \( W \) containing \( x y^{-1} \), there are soft open sets \( U \) and \( V \) such that
\(
\quad x \widetilde{\in}  U, \quad y \widetilde{\in} V\) and \( U V^{-1} \widetilde{\sqsubseteq} W
\). Then, by the definition of the soft subspace topology, for every \( x, y \in H \) and for every soft open set \( W_H \) containing \( x y^{-1} \), there are soft open sets \( U_H  \) and \( V_H  \) such that
\(
\quad x \widetilde{\in}  U_H  , \quad y \widetilde{\in} V_H \) and \( (U_ H ) {(V_ H)} ^{-1} \widetilde{\sqsubseteq} W_ H
\). Hence \((  H,\zeta_H)_\xi \) is a soft topological group.
\end{proof}
\begin{prop}\label{openclosed}
 Let $(G,\zeta)_\xi$ be a soft topological group and $H$ be a subgroup of $G$ such that $H_\xi$ is soft open. Then $H_\xi$ is soft closed \cite{Hida}.
\end{prop}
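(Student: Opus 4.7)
The plan is to show that the soft complement $H_\xi^c$ is soft open by decomposing $G\setminus H$ into right cosets and checking that each one is soft open; since soft topologies are closed under arbitrary soft unions, this will suffice. Concretely, I would write
\[
G\setminus H \;=\; \bigcup_{x\notin H} Hx,
\]
and aim to show that for every $x\notin H$, the absolute soft subset $(Hx)_\xi$ (with $(Hx)_\xi(e)=Hx$ for all $e\in\xi$) lies in $\zeta$.

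To prove the coset claim, I would fix $x\notin H$ and take any $y\in Hx$, so that $y\ast x^{-1}=h\in H$. Since $H_\xi$ is soft open and $y\ast x^{-1}\,\widetilde{\in}\,H_\xi$, applying Proposition \ref{topgroup} with $a=y$, $b=x$, and $W=H_\xi$ yields soft open neighborhoods $U$ of $y$ and $V$ of $x$ with $U\ast V^{-1}\,\widetilde{\sqsubseteq}\,H_\xi$. The key verification is that this already forces $U\,\widetilde{\sqsubseteq}\,(Hx)_\xi$: for each parameter $e\in\xi$, since $x\,\widetilde{\in}\,V$ we have $x\in V(e)$, hence $x^{-1}\in V(e)^{-1}$, so
\[
U(e)\cdot x^{-1} \;\subseteq\; U(e)\cdot V(e)^{-1} \;\subseteq\; H \;=\; H(e),
\]
from which $U(e)\subseteq Hx=(Hx)_\xi(e)$. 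Thus each $y\in Hx$ admits a soft open neighborhood $U_y$ soft-contained in $(Hx)_\xi$, and since $y\widetilde{\in}U_y$ for every $y$, the absolute soft set $(Hx)_\xi$ coincides with the soft union $\widetilde{\bigsqcup}_{y\in Hx}U_y$ and is therefore soft open.

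Finally, $H_\xi^c=\widetilde{\bigsqcup}_{x\notin H}(Hx)_\xi$ is a soft union of soft open sets, hence soft open, so $H_\xi$ is soft closed. The main obstacle in this argument is the middle step: translating the parameter-free identity $U\ast V^{-1}\,\widetilde{\sqsubseteq}\,H_\xi$ given by Proposition \ref{topgroup} into the parameterwise inclusion $U(e)\cdot x^{-1}\subseteq H$, and then sliding $x^{-1}$ outside to obtain $U(e)\subseteq Hx$. This is the only point where the group operation interacts nontrivially with the soft-set bookkeeping; everything else is a direct transcription of the classical proof that an open subgroup of a topological group is closed.
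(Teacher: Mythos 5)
The paper gives no proof of this proposition at all: it is stated as a quoted result and attributed to \cite{Hida}, so there is nothing in-paper to compare your argument against line by line. On its own merits your proof is correct, and it is the standard coset argument transcribed faithfully into the soft setting: you decompose $G\setminus H$ into the cosets $Hx$ with $x\notin H$, show each absolute soft set $(Hx)_\xi$ is soft open by applying Proposition \ref{topgroup} at $a=y$, $b=x$, $W=H_\xi$ and checking parameterwise that $U(e)\,x^{-1}\subseteq U(e)\,V(e)^{-1}\subseteq H$, and then recover $(Hx)_\xi$ as the soft union $\widetilde{\bigsqcup}_{y\in Hx}U_y$ (which works because $y\in U_y(e)$ for every $e$, so the union's $e$-component is exactly $Hx$); finally $H_\xi^c=\widetilde{\bigsqcup}_{x\notin H}(Hx)_\xi$ is soft open. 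The one definitional point you lean on is that $U\ast V^{-1}$ is formed parameterwise, i.e.\ $(U\ast V^{-1})(e)=U(e)\,V(e)^{-1}$; the paper never defines the product of two soft sets explicitly, but this reading is consistent with how the notation is used in the definition of a soft topological group and in the product-group proposition, so it is a fair assumption to make explicit rather than a gap. A shorter route, and the one closer to what the cited source and this paper use elsewhere (the left translations $\alpha_L(h)$ invoked via \cite[Proposition 4.5]{Hida} are soft homeomorphisms), is to observe that each coset is the image of the soft open set $H_\xi$ under a translation soft homeomorphism and is therefore soft open; your version trades that lemma for a direct, self-contained application of Proposition \ref{topgroup}, at the cost of the explicit parameterwise bookkeeping you describe.
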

\begin{prop}
 A soft connected soft topological group
$(G,\zeta)_\xi$ has no soft subgroup
$H\subseteq G$
such that $H_\xi$ is soft open.
\end{prop}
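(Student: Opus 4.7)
The plan is to argue by contradiction and invoke Proposition \ref{openclosed} to turn a soft open subgroup into a soft clopen absolute subset, which the characterization of soft connectedness from the earlier theorem forbids. The statement should be read as asserting that the only subgroup $H$ of $G$ with $H_\xi$ soft open is $G$ itself, so we suppose there is a \emph{proper} subgroup $H \subsetneq G$ with $H_\xi$ soft open and derive a contradiction.

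First I would note that $H_\xi$ is an absolute soft subset of $G_\xi$, since $H$ is an ordinary subset of $G$. Assuming $H_\xi$ is soft open, Proposition \ref{openclosed} applies directly and gives that $H_\xi$ is simultaneously soft closed. Hence $H_\xi$ is a both soft open and soft closed absolute soft subset of $G_\xi$.

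Next I would verify that $H_\xi$ is a nontrivial clopen, i.e.\ $H_\xi \neq \emptyset_\xi$ and $H_\xi \neq G_\xi$. The identity $e$ of $G$ lies in $H$, so $H$ is nonempty and therefore $H_\xi \neq \emptyset_\xi$; the properness assumption $H \subsetneq G$ gives $H_\xi \neq G_\xi$. By the characterization theorem for soft connectedness proved earlier in the paper, the existence of such an $H_\xi$ contradicts the soft connectedness of $(G,\zeta)_\xi$. Therefore no proper subgroup of $G$ can be soft open, which is the desired statement.

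I do not expect any serious obstacle: the result is essentially a direct combination of the cited Proposition \ref{openclosed} with the clopen-characterization of soft connectedness. The only point requiring a little care is the reading of the statement (the proper-subgroup interpretation) and the bookkeeping that $H_\xi$, viewed as an absolute soft subset, is the object to which both the Hida-type open$\Rightarrow$closed argument and the clopen characterization are applied.
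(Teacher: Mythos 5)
Your proposal is correct and follows essentially the same route as the paper's own proof: invoke Proposition \ref{openclosed} to conclude that a soft open $H_\xi$ is also soft closed, and then contradict the clopen characterization of soft connectedness. In fact your version is slightly more careful than the paper's, since you make explicit the reading ``proper subgroup'' and verify $\emptyset_\xi \neq H_\xi \neq G_\xi$ before applying the characterization theorem --- a necessary step, because $H=G$ trivially has $G_\xi$ soft open and the paper's terser argument glosses over this.
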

\begin{proof}
 Let \( (G,\zeta)_\xi \) be a soft connected soft topological group. Assume that \( H \) is a subgroup of \( G \) such that \( H_\xi \) is soft open. Then, by Theorem \ref{openclosed}, \( H_\xi \) is soft closed. This contradicts the soft connectedness of \( G \). Hence, \( G \) has no subgroup \( H \) such that \( H_\xi \) is soft open.
\end{proof}

\begin{prop}\label{productcont}    Let $(X,\zeta)_\xi$ be a soft topological space and $(G,\zeta')_\xi$ be a  soft topological  group. If
\((\mathbb{I}_\xi , \varrho)\colon (X,\zeta)_\xi\to (G,\zeta')_\xi \) and
\((\mathbb{I}_\xi , \varrho')\colon (X,\zeta)_\xi\to (G,\zeta')_\xi\) are soft continuous mappings, then
\[(\mathbb{I}_\xi , \varrho\ast \varrho')\colon (X,\zeta)_\xi\to (G,\zeta')_\xi \] is also a soft continuous mapping.
\end{prop}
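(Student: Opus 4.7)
The plan is to argue pointwise, leveraging the soft continuity of the group multiplication (which is the content of Theorem \ref{grpoperationcont}) together with the hypothesized soft continuity of $\varrho$ and $\varrho'$. Fix an arbitrary $x\in X$ and an arbitrary soft open neighborhood $W$ of $(\varrho\ast\varrho')(x)=\varrho(x)\ast\varrho'(x)$ in $\zeta'$. The aim is to produce a single soft open neighborhood $N$ of $x$ in $\zeta$ with $(\varrho\ast\varrho')(N)\widetilde{\sqsubseteq}W$, since, $x$ and $W$ being arbitrary, Definition~\ref{cont} then delivers the desired soft continuity.

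The first step invokes the defining property of a soft topological group: because $W$ is a soft open neighborhood of the product $\varrho(x)\ast\varrho'(x)$, there exist soft open neighborhoods $U$ of $\varrho(x)$ and $V$ of $\varrho'(x)$ in $\zeta'$ with $U\ast V\widetilde{\sqsubseteq}W$. The second step uses the soft continuity of $(\mathbb{I}_\xi,\varrho)$ at $x$ to produce a soft open neighborhood $U_1$ of $x$ satisfying $\varrho(U_1)\widetilde{\sqsubseteq}U$, and symmetrically the soft continuity of $(\mathbb{I}_\xi,\varrho')$ at $x$ produces a soft open neighborhood $U_2$ of $x$ with $\varrho'(U_2)\widetilde{\sqsubseteq}V$. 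The candidate neighborhood is then $N:=U_1\widetilde{\sqcap}U_2$, which is itself soft open in $\zeta$ because $\zeta$ is closed under finite soft intersections, and is a neighborhood of $x$ because $x$ soft-belongs to each of $U_1,U_2$ and both have parameter set $\xi$.

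The main (though mild) obstacle is then to verify $(\varrho\ast\varrho')(N)\widetilde{\sqsubseteq}W$. I would discharge this parameter by parameter: for each $e\in\xi$ the definition of $\widetilde{\sqcap}$ recalled in the preliminaries gives $N(e)=U_1(e)\cap U_2(e)$, so every $z\in N(e)$ satisfies $\varrho(z)\in U(e)$ and $\varrho'(z)\in V(e)$, whence
\[
(\varrho\ast\varrho')(z)=\varrho(z)\ast\varrho'(z)\in U(e)\ast V(e)\subseteq W(e).
\]
This gives $(\varrho\ast\varrho')(N)(e)\subseteq W(e)$ for every $e\in\xi$, i.e.\ the required soft subset inclusion. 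Combined with the arbitrariness of $x$ and $W$, this shows that $(\mathbb{I}_\xi,\varrho\ast\varrho')$ is soft continuous at every point of $X$ and hence a soft continuous mapping. The only delicate point is making sure the two neighborhoods $U_1,U_2$ produced by the continuity of $\varrho$ and $\varrho'$ are amalgamated into a single neighborhood by the soft intersection; everything else is a direct unwinding of definitions.
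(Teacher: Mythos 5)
Your proof is correct, but it takes a different route from the paper's. The paper factors $(\mathbb{I}_\xi,\varrho\ast\varrho')$ through the soft product space: it builds the diagonal-type map $(\Delta_\xi,(\varrho,\varrho'))\colon (X,\zeta)_\xi\to (G\times G,\zeta'_\times)_{\xi\times\xi}$, checks its soft continuity by intersecting the two neighborhoods supplied by the continuity of $\varrho$ and $\varrho'$, and then composes with the multiplication map $\ast\colon G\times G\to G$, whose soft continuity is Theorem~\ref{grpoperationcont}. You instead stay entirely inside $G$: you apply the first axiom of a soft topological group directly at the pair $(\varrho(x),\varrho'(x))$ to get $U,V$ with $U\ast V\widetilde{\sqsubseteq}W$, pull these back through $\varrho$ and $\varrho'$ separately, and intersect. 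The two arguments share the same essential step (the soft intersection $U_1\widetilde{\sqcap}U_2$), but yours buys some robustness: it avoids the product topology and the diagonal parameter map $\Delta_\xi$ altogether, and it does not rely on the unproved (in this paper, at this point) fact that a composite of soft continuous mappings is soft continuous --- a point that deserves care here, since in Hida's framework soft continuity is \emph{not} characterized by preimages of soft open sets being soft open, so the composition rule must be verified through the neighborhood definition. The paper's route, in exchange, is more conceptual and reuses Theorem~\ref{grpoperationcont}, which the authors also want for later results. Your parameterwise verification that $(\varrho\ast\varrho')(N)(e)\subseteq W(e)$ is a correct unwinding of $\widetilde{\sqcap}$, $\widetilde{\sqsubseteq}$, and the soft image with $\varphi=\mathbb{I}_\xi$, so no gap remains.
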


\begin{proof}
Let \((\mathbb{I}_\xi , \varrho)\colon (X,\zeta)_\xi\to (G,\zeta')_\xi \) and
\((\mathbb{I}_\xi , \varrho')\colon (X,\zeta)_\xi\to (G,\zeta')_\xi\) be  soft continuous
mappings. Now consider the soft mapping $$(\Delta_\xi, (\varrho,\varrho') ):
(X,\zeta)_\xi\to (G\times G,\zeta'_\times)_{\xi\times \xi},$$  for the functions
$\Delta_\xi\colon \xi \to \xi\times \xi$ defined by $\Delta_\xi(e)=(e,e)$ and
$(\varrho,\varrho'):X \to G\times G$ defined by $(\varrho,\varrho')(x)=(\varrho(x),\varrho'(x))$,where
$\zeta'_\times$ is soft product topology induced by $\zeta'$.

Since
$(\mathbb{I}_\xi , \varrho)$ is soft continuous, then  for every \( x \in X\) and
every soft open neighborhood \( U' \) of \( \varrho(x) \),
there is a soft  open neighborhood \( U \) of \( x \) such that
\(
\varrho(x) \widetilde{\in}\varrho(U) \widetilde{\sqsubseteq} U'
\). Similarly, since $(\mathbb{I}_\xi , \varrho')$ is soft continuous, for every soft open set $V'$ containing the point $\varrho(x)$,
there is a soft open set $V$ containing $x$ such that $\varrho'(x) \widetilde{\in}\varrho(V) \widetilde{\sqsubseteq} V'$.  So   for every \( x \in X\) and every soft open neighborhood \( U'\widetilde{\times}V' \) of \( (\varrho(x),\varrho'(x)) \),
there is a soft  open neighborhood \( U\widetilde{\sqcap}V \) of \( x \) such that
\(
(\varrho,\varrho')(x) \widetilde{\in} (\varrho,\varrho')(U\widetilde{\sqcap}V) \widetilde{\sqsubseteq} U'\widetilde{\times}V'
\). Hence $(\Delta_\xi, (\varrho,\varrho') )$ is a soft continuous mapping.

Moreover, by Proposition \ref{grpoperationcont}, the group operation
\[
\ast\colon G\times G\to G, \quad (a,b)\mapsto a\ast b
\]
is soft continuous. Therefore, the composition

\[
\begin{aligned}
(X,\zeta)_\xi
&\;\xrightarrow{\,(\Delta_\xi, (\varrho,\varrho') )\,}\;
(G\times G,\zeta'_{\times})_{\xi\times \xi}
\;\xrightarrow{\;\ast\;}\;
(G,\zeta')_\xi   \\[6pt]
x\;&\;\longmapsto\;(\varrho(x),\varrho'(x))\;\longmapsto\;\varrho(x)\ast \varrho'(x).
\end{aligned}
\]
is soft continuous. Hence, we conclude that
\[
(\ast)\circ((\Delta_\xi , (\varrho , \varrho'))=(\mathbb{I}_\xi , \varrho\ast \varrho')
\]
is soft continuous as well.

\end{proof}
\begin{ex}\em{
 Let $(G,\ast)$ be a group and let \( (G,\zeta)_\xi  \)  be a soft topological group. If $\Gamma:(I,(\mathscr{U}_\xi)_I) \to (G,\zeta)_\xi $ and $\Gamma':(I,(\mathscr{U}_\xi)_I) \to (G,\zeta)_\xi $ are  soft paths, then $\Gamma\ast\Gamma':(I,(\mathcal{U}_{\xi})_{I},\xi)\to (G,\zeta)_\xi  $ is also a soft path.}
\end{ex}
\begin{prop}
Let \( (G,\zeta)_\xi  \) be a soft topological group, and let $H$ and $K$
be soft connected subsets of $G$. Then  $H\ast K\subseteq G$ is also soft connected.
\end{prop}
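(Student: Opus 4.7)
The plan is to realize $H \ast K$ as a union of soft connected subsets of $G$ sharing a common soft connected piece, and then to invoke the soft analogue of the classical union-of-connected-sets lemma. First I would fix an auxiliary base point $h_0 \in H$ and decompose
\[
H \ast K \;=\; \bigcup_{k \in K} \bigl[(h_0 \ast K) \cup (H \ast k)\bigr],
\]
noting that every bracketed piece contains the fixed set $h_0 \ast K$.

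To see that each of the building blocks $h_0 \ast K$ and $H \ast k$ is soft connected, I would deduce from Proposition \ref{productcont}, applied to the constant map $g \mapsto h_0$ and the identity $\mathbb{I}_G$ (both soft continuous), that the left translation $L_{h_0}\colon G \to G$, $g \mapsto h_0 \ast g$, is soft continuous. Combining Proposition \ref{restricted} (restriction to the soft subspace $K$) with Theorem \ref{softcontconnectted} (soft continuous image of a soft connected space is soft connected) then yields that $h_0 \ast K = L_{h_0}(K)$ is soft connected. An analogous argument using the right translation $R_k\colon g \mapsto g \ast k$ shows that $H \ast k$ is soft connected for each $k \in K$.

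With these pieces in hand, I would complete the proof by observing that $(h_0 \ast K) \cap (H \ast k) \ni h_0 \ast k$, so the two soft connected sets share a common point and their union is soft connected. Since every bracketed term in the above decomposition contains the fixed soft connected piece $h_0 \ast K$, the full union $H \ast K$ is soft connected by the same principle.

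The main obstacle is the soft-topological auxiliary fact used twice above: the union of a family of soft connected subsets with a common nonempty intersection is soft connected. The paper does not state it explicitly, but the classical argument carries over in the soft setting. If $H \ast K$ admitted a soft disconnection into absolute soft-open pieces $P_\xi$ and $Q_\xi$, then by soft connectedness each building block would have to lie entirely inside $P$ or entirely inside $Q$; the shared subset $h_0 \ast K$ would then pin every piece onto the same side, contradicting the nontriviality of the separation. Establishing this lemma carefully in the language of the paper's Section~3 is the only technically new ingredient needed.
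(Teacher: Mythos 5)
Your proposal is correct and follows essentially the same route as the paper: both arguments rest on the soft continuity of left and right translations, the fact that soft continuous images of soft connected sets are soft connected, and the bridging of the slices $h\ast K$ by a connected set of the form $H\ast k$. The only difference is organizational — you isolate the union-of-connected-sets-with-common-point lemma explicitly, whereas the paper inlines exactly that argument as a direct contradiction with a hypothesized separation $A_\xi$, $B_\xi$ of $H\ast K$.
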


\begin{proof}
Assume, to the contrary, that $H\ast K$ is soft disconnected.
Then there exist two nonempty, disjoint absolute soft open subsets
$A_\xi$ and $B_\xi$ of \( (G,\zeta)_\xi  \) such that $H\ast K = A \cup B, \quad A \cap B = \emptyset$.
For each $h\in H$, by \cite[Propsition 4.5]{Hida}, consider the left translation
$\alpha_L(h):G\to G$ defined by $\alpha_L(h)(x)=h\ast x$.
Since $\alpha_L(h)$ is  soft continuous,
the image $h\ast K=\alpha_L(h)(K)$ is soft connected for every $h\in H$.

Since $A_\xi$ and $B_\xi$ form a soft separation of $H\ast K$,
they induce a soft separation on each subset $h\ast K$.
By the soft connectedness of $h\ast K$, one must have
$h\ast K\subseteq A$ or $h\ast K\subseteq B$.

Suppose that there exist $h_1,h_2\in H$ with $h_1\ast K\subseteq A$
and $h_2\ast K\subseteq B$.
Choose $k'\in K$ and note that since the right translation is soft continuous, $H\ast k'$ is also soft connected.
Then $h_1\ast k'\in A$ and $h_2\ast k'\in B$. Therefore,  $h_1\ast k'\widetilde{\in} A_\xi$ and $h_2\ast k'\widetilde{\in} B_\xi$,
contradicting the soft connectedness of $H\ast k'$.
Hence, all $h\ast K$ subsets must lie in the same soft open set,
say $A_\xi$, implying
\[
H\ast K=\bigcup_{h\in H}h\ast K\subseteq A.
\]
Thus, $B_\xi=\emptyset_\xi$, contradicting the assumption that both
$A$ and $B$ are nonempty.
Therefore, $H\ast K$ is soft connected.
\end{proof}

\begin{thm}\label{softpathproduct}
Let $(X,\zeta)_\xi$ and $(X',\zeta')_{\xi'}$ be soft path connected spaces.
Then the soft product space $(X\times X',\zeta_\times )_{\xi\times \xi'}$ is also soft path connected.
\end{thm}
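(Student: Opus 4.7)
The plan is to mirror the classical argument: given two points in $X \times X'$, extract component soft paths in each factor and splice them coordinate-wise into a single soft path in the product.

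Concretely, fix $(x_1, x_1'),(x_2, x_2') \in X \times X'$. By the soft path connectedness of each factor, select a soft path $\Gamma = (\mathbb{I}_\xi, \gamma) : (I,(\mathscr{U}_\xi)_I) \to (X,\zeta)_\xi$ from $x_1$ to $x_2$ and a soft path $\Gamma' = (\mathbb{I}_{\xi'}, \gamma') : (I,(\mathscr{U}_{\xi'})_I) \to (X',\zeta')_{\xi'}$ from $x_1'$ to $x_2'$. Define $\delta : I \to X \times X'$ by $\delta(t) = (\gamma(t), \gamma'(t))$, and consider the candidate soft mapping $\Delta = (\mathbb{I}_{\xi \times \xi'}, \delta) : (I,(\mathscr{U}_{\xi \times \xi'})_I) \to (X \times X', \zeta_\times)_{\xi \times \xi'}$. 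By construction $\delta(0) = (x_1, x_1')$ and $\delta(1) = (x_2, x_2')$, so everything reduces to verifying that $\Delta$ is soft continuous.

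To check this, fix $t \in I$ and a basic soft open neighborhood $W = U \widetilde{\times} U'$ of $\delta(t)$ in $\zeta_\times$, with $U \in \zeta$ and $U' \in \zeta'$; since every soft open set in the product is a soft union of such basic ones, this is enough. Applying soft continuity of $\Gamma$ to the soft open neighborhood $U$ of $\gamma(t)$ produces a soft open neighborhood of $t$ in $(\mathscr{U}_\xi)_I$, and by the construction of the soft usual topology this neighborhood must have the form $(H^{G_1}_\xi)_I$ for some open $G_1 \subseteq \mathbb{R}$ with $t \in G_1 \cap I$ and $\gamma(G_1 \cap I) \subseteq U(e)$ for every $e \in \xi$. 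Symmetrically, from $\Gamma'$ one obtains an open $G_2 \subseteq \mathbb{R}$ with $t \in G_2 \cap I$ and $\gamma'(G_2 \cap I) \subseteq U'(e')$ for every $e' \in \xi'$.

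Setting $G = G_1 \cap G_2$, the set $V = (H^G_{\xi \times \xi'})_I$ is a soft open neighborhood of $t$ in $(\mathscr{U}_{\xi \times \xi'})_I$. For any $s \in G \cap I$ and any $(e,e') \in \xi \times \xi'$, we have $\gamma(s) \in U(e)$ and $\gamma'(s) \in U'(e')$, hence $\delta(s) \in U(e) \times U'(e') = (U \widetilde{\times} U')(e,e')$. Thus $\Delta(V) \widetilde{\sqsubseteq} U \widetilde{\times} U'$, establishing soft continuity of $\Delta$ at $t$. The only subtlety is parameter-set bookkeeping: the domain of $\Delta$ must carry the soft usual topology with parameter set $\xi \times \xi'$ rather than $\xi$ or $\xi'$ alone, but because soft usual open sets are constant across parameters, the single open set $G_1 \cap G_2 \subseteq \mathbb{R}$ extracted from the two factor paths yields a legitimate neighborhood $(H^{G_1 \cap G_2}_{\xi \times \xi'})_I$ and the argument closes without further effort.
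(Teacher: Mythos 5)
Your proposal follows essentially the same route as the paper: choose a soft path in each factor, combine them coordinate-wise into $\delta(t)=(\gamma(t),\gamma'(t))$ with identity parameter map on $\xi\times\xi'$, and verify soft continuity of the resulting map into the soft product space. The only difference is that you carry out the continuity check explicitly (using that soft usual open sets are constant across parameters, so $G_1\cap G_2$ furnishes the required neighborhood), whereas the paper defers this step to an analogy with its earlier product-continuity argument; your version is therefore at least as complete, and the reduction to basic soft open sets $U\widetilde{\times}U'$ matches the level of rigor the paper itself uses.
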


\begin{proof}
Let $(x_1,y_1),(x_2,y_2)\in X\times X'$.
Since $(X,\zeta)_\xi$ and $(X',\zeta')_{\xi}$ are soft path connected, there exist soft paths
\[
\Gamma_X=(\mathbb{I}_\xi,\gamma_X):(I,(\mathscr{U}_{\xi})_{I})\to (X, \zeta)_ \xi,
\qquad
\Gamma_{X'}=(\mathbb{I}_\xi',\gamma_{X'})(I,(\mathscr{U}_{\xi})_{I})\to (X', \zeta')_ {\xi'},
\]
such that $\gamma_X(0)=x_1$, $\gamma_X(1)=x_2$ and $\gamma_{X'}(0)=y_1$, $\gamma_{X'}(1)=y_2$.

Define a soft mapping
\[
(\mathbb{I}_{\xi\times \xi'},\gamma):(I,(\mathscr{U}_{\xi\times \xi'})_{I})\to (X\times X', \zeta_\times)_ {\xi\times \xi'}
\]
by
\[
\Gamma=\mathbb{I}_\xi\times \mathbb{I}_{\xi'}=\mathbb{I}_{\xi\times \xi'}: \xi\times \xi'\to \xi\times \xi'\quad \text{and} \quad \gamma(t)=(\gamma_X(t),\,\gamma_{X'}(t)).
\]

Since both $(\mathbb{I}_\xi,\gamma_X)$ and $(\mathbb{I}_\xi',\gamma_{X'})$ are soft continuous,
the soft continuity of $(\mathbb{I}_{\xi\times \xi'},\gamma)$ can be established in a manner similar to Theorem \ref{productcont}.

Moreover, $\gamma(0)=(x_1,y_1)$ and $\gamma(1)=(x_2,y_2)$.
Hence $\Gamma$ is a soft path in $X\times X'$ connecting $(x_1,y_1)$ and $(x_2,y_2)$.

Therefore, $(X\times X',\zeta_\times )_{\xi\times \xi'}$ is soft path connected.
\end{proof}

\begin{thm}
Let \( (G,\zeta)_\xi  \) be a soft topological group. If $U$ and $V$ are soft path connected subsets of $G$, then $U\ast V$ is also soft path connected.

\end{thm}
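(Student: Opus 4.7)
The plan is to mimic the classical topology argument: given two points of $U\ast V$, write each as a product of an element of $U$ and an element of $V$, then combine the soft paths connecting the respective factors via the group operation.

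First, pick arbitrary $g_1,g_2\widetilde{\in}(U\ast V)_\xi$ and write $g_1=u_1\ast v_1$ and $g_2=u_2\ast v_2$ with $u_1,u_2\in U$ and $v_1,v_2\in V$. Since $(U,\zeta_U)_\xi$ is soft path connected, there is a soft path
\[
\Gamma_U=(\mathbb{I}_\xi,\gamma_U):(I,(\mathscr{U}_\xi)_I)\to(U,\zeta_U)_\xi
\]
with $\gamma_U(0)=u_1$ and $\gamma_U(1)=u_2$. Analogously, we get a soft path $\Gamma_V=(\mathbb{I}_\xi,\gamma_V)$ in $V$ from $v_1$ to $v_2$.

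Next, regard $\gamma_U$ and $\gamma_V$ as functions into $G$ by composing with the inclusions $U\hookrightarrow G$ and $V\hookrightarrow G$, which are soft continuous by the definition of the soft subspace topology. This gives two soft continuous mappings
\[
(\mathbb{I}_\xi,\gamma_U),(\mathbb{I}_\xi,\gamma_V):(I,(\mathscr{U}_\xi)_I)\to(G,\zeta)_\xi.
\]
Applying Proposition \ref{productcont}, the soft mapping
\[
(\mathbb{I}_\xi,\gamma_U\ast\gamma_V):(I,(\mathscr{U}_\xi)_I)\to(G,\zeta)_\xi,\qquad t\mapsto\gamma_U(t)\ast\gamma_V(t),
\]
is soft continuous. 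Since $\gamma_U(t)\in U$ and $\gamma_V(t)\in V$ for every $t\in I$, its image is contained in $U\ast V$, and hence by corestriction (which preserves soft continuity by the definition of the soft subspace topology $\zeta_{U\ast V}$) we obtain a soft continuous mapping
\[
(\mathbb{I}_\xi,\gamma_U\ast\gamma_V):(I,(\mathscr{U}_\xi)_I)\to(U\ast V,\zeta_{U\ast V})_\xi,
\]
with value $g_1$ at $0$ and $g_2$ at $1$. This is the desired soft path.

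The only subtlety will be the corestriction step in the soft setting: one must verify that restricting the codomain of a soft continuous map $(\mathbb{I}_\xi,\varrho):(X,\zeta)_\xi\to(G,\zeta')_\xi$ whose image lies in a subset $A\subseteq G$ yields a soft continuous map into $(A,\zeta'_A)_\xi$. This follows quickly from the definition of the soft subspace topology since every soft open neighborhood in $\zeta'_A$ has the form $W_A$ for some $W\in\zeta'$, so a preimage argument identical to that used in Proposition \ref{restricted} closes the gap. Once this observation is in place, the combination of Proposition \ref{productcont} with the path-connectedness hypotheses on $U$ and $V$ yields the result directly.
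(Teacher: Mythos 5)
Your proof is correct, but it takes a genuinely different route from the paper's. The paper argues structurally: by Theorem \ref{softpathproduct} the soft product space $U\widetilde{\times}V$ over the parameter set $\xi\times\xi$ is soft path connected, the group operation $\ast\colon G\times G\to G$ is soft continuous by Theorem \ref{grpoperationcont}, and then Theorem \ref{preservepathcon} is invoked to conclude that the image $U\ast V$ is soft path connected. You instead build the connecting soft path explicitly: factor the two given points of $U\ast V$, join the factors by soft paths inside $U$ and $V$, push these into $G$ along the (soft continuous) inclusions, and multiply them pointwise using Proposition \ref{productcont}, finishing with a corestriction to $(U\ast V,\zeta_{U\ast V})_\xi$. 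The paper's argument is shorter and reuses more machinery, but it passes through the product space over the doubled parameter set $\xi\times\xi$, whereas Theorem \ref{preservepathcon} as stated assumes the same parameter set $\xi$ on both sides, so its application to $\ast$ requires a silent adjustment; your construction stays entirely over $\xi$ and avoids this issue. The one extra obligation in your version is the corestriction step, which you correctly flag and whose verification is exactly the routine computation you indicate (a soft open neighborhood in $\zeta_{U\ast V}$ is of the form $W_{U\ast V}$ with $W\in\zeta$, and a soft open neighborhood $O$ of $t$ with $\varrho(O)\widetilde{\sqsubseteq}W$ automatically satisfies $\varrho(O)\widetilde{\sqsubseteq}W_{U\ast V}$ once the image of $\varrho$ lies in $U\ast V$), mirroring Proposition \ref{restricted}. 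Both proofs are sound; yours is more elementary and self-contained, the paper's more modular.
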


\begin{proof}
Since $U$ and $V$ are soft path connected, by Theorem \ref{softpathproduct}, their soft product $U\widetilde{\times} V$ is also soft path connected. On the other hand, by Theorem \ref{grpoperationcont} and Theorem \ref{preservepathcon}, since \[\ast\colon G\times G\to G, (a,b)\mapsto a\ast b\] is soft continuous, it follows that the image of  $U\widetilde{\times} V$ under this operation,  $U\ast V$
is also soft path connected.
\end{proof}

\begin{thm}
Let $(G,\zeta)_\xi $ and $(G',\zeta')_{\xi} $ be soft topological groups, and let
\[
\varrho\colon G\to G'
\]
be a group homomorphism.
Then $(\mathbb{I}_\xi,\varrho)$ is soft continuous if and only if it is soft continuous at the identity element $e_G$.
\end{thm}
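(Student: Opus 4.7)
The proof will be the standard topological-group argument transferred to the soft setting: continuity at a single point can be propagated to every point by translating on both sides, using that left (and right) translations on a soft topological group are soft homeomorphisms. The forward direction is immediate from the definition, so the real content is the reverse implication.

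For the nontrivial direction, the plan is to fix an arbitrary $a\in G$ and an arbitrary soft open neighborhood $W'$ of $\varrho(a)$ in $(G',\zeta')_\xi$, and produce a soft open neighborhood of $a$ that $\varrho$ pushes into $W'$. First I would invoke \cite[Proposition~4.5]{Hida} (already used in the preceding proof) which guarantees that, in any soft topological group, left translation $\alpha_L(g)\colon x\mapsto g\ast x$ is a soft homeomorphism. Applying $\alpha_L(\varrho(a)^{-1})$ in $G'$ to the soft open neighborhood $W'$ of $\varrho(a)$ produces a soft open set $W''=\varrho(a)^{-1}\ast W'$ which is a soft open neighborhood of $e_{G'}$.

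Next, using the hypothesis of soft continuity of $(\mathbb{I}_\xi,\varrho)$ at $e_G$, I obtain a soft open neighborhood $U'$ of $e_G$ with $\varrho(U')\widetilde{\sqsubseteq}W''$. Finally, I translate back: applying $\alpha_L(a)$ in $G$, the set $U=a\ast U'$ is a soft open neighborhood of $a$. Because $\varrho$ is a group homomorphism, $\varrho(a\ast x)=\varrho(a)\ast \varrho(x)$ for every $x$, and therefore
\[
\varrho(U)=\varrho(a)\ast\varrho(U')\widetilde{\sqsubseteq}\varrho(a)\ast W''=\varrho(a)\ast\varrho(a)^{-1}\ast W'=W',
\]
which verifies the defining condition (Definition~\ref{cont}) at the arbitrary point $a$. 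Since $a\in G$ was arbitrary, $(\mathbb{I}_\xi,\varrho)$ is soft continuous on all of $G$.

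The only mildly delicate step is the inclusion $\varrho(a)\ast\varrho(U')\widetilde{\sqsubseteq}\varrho(a)\ast W''$, which should be justified by observing that the soft homeomorphism $\alpha_L(\varrho(a))$ preserves the soft subset relation, together with the homomorphism identity $\varrho(a\ast U')=\varrho(a)\ast\varrho(U')$ understood in the soft sense (it follows because $\varphi=\mathbb{I}_\xi$, so the parameter components on both sides are the identity and only the $X$-component uses the homomorphism). I expect this bookkeeping with the soft-set images of translated neighborhoods to be the main point where care is needed; the rest of the argument is a direct translation-and-shift construction that parallels the classical proof.
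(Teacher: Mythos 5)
Your proposal is correct and follows essentially the same route as the paper's proof: both reduce continuity at an arbitrary $a$ to continuity at $e_G$ by translating $W'$ back to a soft open neighborhood of $e_{G'}$ via a translation soft homeomorphism from \cite[Proposition~4.5]{Hida}, applying the hypothesis at the identity, and then translating forward by $a$ using the homomorphism property of $\varrho$. If anything, your version states the intermediate inclusion $\varrho(U')\widetilde{\sqsubseteq}\varrho(a)^{-1}\ast W'$ more explicitly than the paper does, which is a small improvement in bookkeeping rather than a different argument.
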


\begin{proof}
\emph{($\Rightarrow$)} If $(\mathbb{I}_\xi,\varrho)$ is soft continuous on $G$, then it is certainly soft continuous at $e_G$.

\smallskip
\emph{($\Leftarrow$)} Assume that $(\mathbb{I}_\xi,\varrho)$ is soft continuous at $e_G$.
We need to show that $(\mathbb{I}_\xi,\varrho)$ is soft continuous at an arbitrary $a\in G$. Let $W'$ be a soft open neighborhood of $\varrho(a)$ in $\zeta'$.
By \cite[Propsition 4.5]{Hida},
\[
\alpha_L(a)\colon G\to G,\quad x\mapsto a\ast x,
\qquad\text{and}\qquad
\beta_{L}(\varrho(a))\colon G'\to G',\quad x\mapsto x\ast\varrho(a) .
\]
are soft homeomorphisms, then the soft  set $\beta_{L}(\varrho(a)^{-1}) \bigl (W')=\varrho(a)^{-1}\ast W'$
is a soft open neighborhood of the identity element $e_{G'}$ of the group $G'$.
So, there exists a soft neighborhood $W$ of $e_G$ in $(G,\zeta)_\xi $ such that $(\mathbb{I}_\xi,\varrho)(W)\ \widetilde{\sqsubset}\ W'$.
Thus, $\alpha_L(a)(W)=a\ast W$  is a soft open  neighborhood of $a$.
For any $a\in G$ we have
\[
\varrho(a)\widetilde{\in}(\mathbb{I}_\xi,\varrho)\bigl(a\ast W)
\widetilde{\sqsubset} W'.
\]
This proves the soft continuity of $\varrho$ at $a$.
Since $a$ was arbitrary, $(\mathbb{I}_\xi,\varrho)$ is soft continuous on $G$.
\end{proof}

\section{Category of Soft Topological Groups}

In this section, we first define the morphisms of the category whose objects are soft topological groups,
as well as the composition operation for these morphisms.
\begin{defn}\em{
Let $( G,\zeta)_\xi  $  and $( G',\zeta')_{\xi'} $ be soft topological groups. If $(\varphi,\varrho): (G,\zeta)_\xi   \to (G',\zeta')_{\xi'}$  is a soft continuous mapping such that $\varphi: \xi \rightarrow \xi'$ is a function and $\varrho: G \rightarrow G'$ is a group homomorphism, then $(\varphi,\varrho)$  is called a \textbf{morphism of soft topological groups}.}
\end{defn}
\begin{ex}\em{

Let \( (G,\zeta)_\xi\) be a soft topological group.
Let \( \mathbb{I}_\xi : \xi \to \xi \) be the identity function on the parameter set,
and \( \mathbb{I}_G : G \to G \) be the identity group homomorphism. Then the soft mapping
\[
(\mathbb{I}_\xi, \mathbb{I}_G) : (G,\zeta)_\xi \rightarrow  (G,\zeta)_\xi
\]
is soft continuous.
This morphism of soft topological groups is called the \textbf{identity morphism of soft topological groups} and is denoted by \(
\mathbb{I}_{ (G,\zeta)_\xi }.
\)
}
\end{ex}

\begin{defn}\em{
Let $(\varphi_1,\varrho_1) :  (G_1,\zeta_1 )_{\xi_1}  \rightarrow  (G_2,\zeta_2 )_{\xi_2} $ and $ (\varphi_2,\varrho_2) : (G_2,\zeta_2 )_{\xi_2}\rightarrow (G_3,\zeta_3 )_{\xi_3} $ be morphism of soft topological groups. Then $(\varphi_1,\varrho_1)=(\varphi_2,\varrho_2)$ if and only if $\varphi_1=\varphi_2$ and $\varrho_1=\varrho_2$.}
\end{defn}

\begin{prop}
Let \( (G_1,\zeta_1 )_{\xi_1} \), \( (G_2,\zeta_2 )_{\xi_2} \), and \( (G_3,\zeta_3 )_{\xi_3} \) be soft topological groups. Suppose that
\[(\varphi_1,\varrho_1) :  (G_1,\zeta_1 )_{\xi_1}  \rightarrow  (G_2,\zeta_2 )_{\xi_2} \quad \text{and} \quad
(\varphi_2,\varrho_2) : (G_2,\zeta_2 )_{\xi_2}\rightarrow (G_3,\zeta_3 )_{\xi_3}
\]
are morphisms of soft topological groups. Then the composition
\[
(\varphi_1,\varrho_1) \circ (\varphi_2,\varrho_2) = (\varphi_2\circ \varphi_1,\, \varrho_2 \circ \varrho_1)
\]
is also a morphism of soft topological groups.
\end{prop}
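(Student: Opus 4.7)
The plan is to verify three things in sequence: that $\varphi_2\circ\varphi_1\colon \xi_1\to\xi_3$ is a well-defined function on parameter sets, that $\varrho_2\circ\varrho_1\colon G_1\to G_3$ is a group homomorphism, and that the resulting soft mapping $(\varphi_2\circ\varphi_1,\,\varrho_2\circ\varrho_1)$ is soft continuous. The first is immediate from the hypothesis on $\varphi_1,\varphi_2$. The second is a standard set-theoretic check: for any $a,b\in G_1$,
\[
(\varrho_2\circ\varrho_1)(a\ast b)=\varrho_2(\varrho_1(a)\ast\varrho_1(b))=\varrho_2(\varrho_1(a))\ast\varrho_2(\varrho_1(b)),
\]
using that $\varrho_1$ and $\varrho_2$ are individually group homomorphisms. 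So the substance of the proof is the soft continuity.

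For soft continuity I would argue pointwise using the neighborhood formulation from the definition of soft continuity. Fix $x\in G_1$ and let $W$ be an arbitrary soft open neighborhood of $(\varrho_2\circ\varrho_1)(x)=\varrho_2(\varrho_1(x))$ in $\zeta_3$. Since $(\varphi_2,\varrho_2)$ is soft continuous at the point $\varrho_1(x)\in G_2$, there exists a soft open neighborhood $V$ of $\varrho_1(x)$ in $\zeta_2$ with $(\varphi_2,\varrho_2)(V)\widetilde{\sqsubseteq}W$. Then, since $(\varphi_1,\varrho_1)$ is soft continuous at $x$, there exists a soft open neighborhood $U$ of $x$ in $\zeta_1$ with $(\varphi_1,\varrho_1)(U)\widetilde{\sqsubseteq}V$. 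Applying $(\varphi_2,\varrho_2)$ to both sides and using monotonicity of soft images under $\widetilde{\sqsubseteq}$, I get $(\varphi_2,\varrho_2)\bigl((\varphi_1,\varrho_1)(U)\bigr)\widetilde{\sqsubseteq}(\varphi_2,\varrho_2)(V)\widetilde{\sqsubseteq}W$.

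The main obstacle — really the only non-trivial step — is justifying the identification
\[
\bigl(\varphi_2\circ\varphi_1,\,\varrho_2\circ\varrho_1\bigr)(U)\;=\;(\varphi_2,\varrho_2)\bigl((\varphi_1,\varrho_1)(U)\bigr),
\]
so that the chain of inclusions above really gives soft continuity of the composed pair. I would verify this parameter by parameter from the definition of the soft image: for $d\in(\varphi_2\circ\varphi_1)(\xi_1)$,
\[
(\varphi_2\circ\varphi_1,\varrho_2\circ\varrho_1)(U)(d)=\bigcup_{(\varphi_2\circ\varphi_1)(e)=d}\varrho_2\bigl(\varrho_1(U(e))\bigr),
\]
and rewriting the union by first grouping over $e'=\varphi_1(e)$ and then over $d=\varphi_2(e')$ recovers exactly $(\varphi_2,\varrho_2)\bigl((\varphi_1,\varrho_1)(U)\bigr)(d)$. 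Once this functoriality of soft images is in hand, the neighborhood chase above completes the argument and the claim that the composition is again a morphism of soft topological groups follows.
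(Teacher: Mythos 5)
Your proposal is correct and follows essentially the same route as the paper: reduce to soft continuity and run the standard two-step neighborhood chase through the intermediate space. In fact your write-up is tighter than the paper's --- you apply the continuity of $(\varphi_2,\varrho_2)$ first and $(\varphi_1,\varrho_1)$ second (the logically correct order, which the paper's wording muddles), and you explicitly justify the identity $(\varphi_2\circ\varphi_1,\varrho_2\circ\varrho_1)(U)=(\varphi_2,\varrho_2)\bigl((\varphi_1,\varrho_1)(U)\bigr)$ together with monotonicity of soft images, both of which the paper leaves implicit.
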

\begin{proof}
 Since \( \varphi_2\circ \varphi_1 \) is a function and \( \varrho_2 \circ \varrho_1 \) is a group homomorphism,
it is sufficient to show that \((\varphi_2\circ \varphi_1, \varrho_2 \circ \varrho_1)\) is a soft continuous mapping. And since \( (\varphi_1, \varrho_1) \) is a soft continuous mapping, for every \( a \in G_1 \) and for every soft open  neighborhood \( U' \) of \( \varrho_1(a) \),
there is a soft open neighborhood \( U \) of \( a \) such that \(
\varrho_1(a) \widetilde{\in} (\varphi ,\varrho)(U) \widetilde{\sqsubseteq} U'\).

Let \( b = \varrho_1(a) \). Because \((\varphi_2, \varrho_2)\) is a soft continuous mapping, for every \( b \in G_2 \) and for every  soft open neighborhood \( U'' \)  of \( \varrho_2(b)=\varrho_2(\varrho_1(a)) \) there is a soft open neighborhood \(U'\) of \(b\) such that
\(\varrho_2(\varrho_1(x)) \widetilde{\in} (\varphi_2\circ \varphi_1,\, \varrho_2 \circ \varrho_1)(U') \widetilde{\sqsubseteq} U''.\)

Hence, the composition $(\varphi_1,\varrho_1) \circ (\varphi_2,\varrho_2)$ is also soft continuous.

\end{proof}
\begin{thm}
The soft topological groups and the morphisms between the soft topological groups together form a category.
\end{thm}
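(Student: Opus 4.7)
The plan is to verify, in order, the four standard axioms required for a category: (i) a class of objects, (ii) for each ordered pair of objects a set of morphisms together with a well-defined composition, (iii) associativity of composition, and (iv) for each object an identity morphism satisfying the left and right unit laws. The objects will of course be soft topological groups, and the hom-class between $(G_1,\zeta_1)_{\xi_1}$ and $(G_2,\zeta_2)_{\xi_2}$ will be the collection of morphisms of soft topological groups $(\varphi,\varrho)$ as just defined.

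First I would note that well-definedness of composition has already been established in the previous proposition: if $(\varphi_1,\varrho_1)$ and $(\varphi_2,\varrho_2)$ are morphisms of soft topological groups, then $(\varphi_2\circ\varphi_1,\varrho_2\circ\varrho_1)$ is again such a morphism, since a composition of functions is a function, a composition of group homomorphisms is a group homomorphism, and soft continuity is preserved. Next I would dispatch associativity componentwise: for morphisms
\[
(G_1,\zeta_1)_{\xi_1}\xrightarrow{(\varphi_1,\varrho_1)}(G_2,\zeta_2)_{\xi_2}\xrightarrow{(\varphi_2,\varrho_2)}(G_3,\zeta_3)_{\xi_3}\xrightarrow{(\varphi_3,\varrho_3)}(G_4,\zeta_4)_{\xi_4},
\]
the equality $((\varphi_3,\varrho_3)\circ(\varphi_2,\varrho_2))\circ(\varphi_1,\varrho_1)=(\varphi_3,\varrho_3)\circ((\varphi_2,\varrho_2)\circ(\varphi_1,\varrho_1))$ reduces to $\varphi_3\circ(\varphi_2\circ\varphi_1)=(\varphi_3\circ\varphi_2)\circ\varphi_1$ on parameter sets and $\varrho_3\circ(\varrho_2\circ\varrho_1)=(\varrho_3\circ\varrho_2)\circ\varrho_1$ on groups, both of which are the standard associativity of function composition.

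For identities, I would invoke the earlier example exhibiting, for each soft topological group $(G,\zeta)_\xi$, the identity morphism $\mathbb{I}_{(G,\zeta)_\xi}=(\mathbb{I}_\xi,\mathbb{I}_G)$, which is a morphism of soft topological groups because $\mathbb{I}_\xi$ is a function on the parameter set, $\mathbb{I}_G$ is a group homomorphism, and $(\mathbb{I}_\xi,\mathbb{I}_G)$ is trivially soft continuous. The unit laws then follow from $\mathbb{I}_\xi\circ\varphi=\varphi=\varphi\circ\mathbb{I}_\xi$ and $\mathbb{I}_G\circ\varrho=\varrho=\varrho\circ\mathbb{I}_G$ for any morphism $(\varphi,\varrho):(G,\zeta)_\xi\to(G',\zeta')_{\xi'}$ (using the appropriate identity on each side).

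The verification is essentially bookkeeping; there is no deep obstacle because each axiom reduces, via the componentwise definition of morphisms and composition, to a classical fact about \textbf{Set} and \textbf{Grp}. The only point that requires a moment of care is making sure the chosen convention of composition order for $(\varphi_1,\varrho_1)\circ(\varphi_2,\varrho_2)$ is consistent with the direction of the underlying soft mappings, so that the two components live in the appropriate hom-sets; once this is fixed, associativity and the unit laws follow immediately and one concludes that soft topological groups with their morphisms form a category.
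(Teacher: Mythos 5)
Your proposal is correct and follows essentially the same route as the paper: composition is well defined by the preceding proposition, identities come from the earlier example $(\mathbb{I}_\xi,\mathbb{I}_G)$, and both the unit laws and associativity are verified componentwise by reducing to ordinary composition of functions and of group homomorphisms. Your added caution about the composition-order convention is sensible given the paper's notation $(\varphi_1,\varrho_1)\circ(\varphi_2,\varrho_2)=(\varphi_2\circ\varphi_1,\varrho_2\circ\varrho_1)$, but it does not change the substance of the argument.
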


\begin{proof}
 The objects of the category are all soft topological groups, and the morphisms are all morphisms of soft topological groups  between objects.

The partial composition of the category is defined as the composition of morphisms of soft topological groups.

For every object $(G,\zeta)_\xi$, there exists an identity morphism
\[
\mathbb{I}_{ (G,\zeta)_\xi }=(\mathbb{I}_\xi, \mathbb{I}_G) : (G,\zeta)_\xi \longrightarrow (G,\zeta)_\xi
\]
as given in Example 3.2.
For any morphism
\[
(\varphi, \varrho) : (G,\zeta)_\xi  \to ( G',\zeta')_{\xi'} ,
\]
the composition satisfies
\[(\varphi, \varrho) \circ \mathbb{I}_{(G,\zeta)_\xi } = (\varphi, \varrho) \circ(\mathbb{I}_\xi, \mathbb{I}_G)  = (\varphi \circ \mathbb{I}_\xi, \varrho \circ \mathbb{I}_G) = (\varphi, \varrho).\]
Similarly
\[\mathbb{I}_{( G',\zeta')_{\xi'}} \circ (\varphi, \varrho) =(\varphi, \varrho).\]

 Let  \( (G_1,\zeta_1 )_{\xi_1} \), \( (G_2,\zeta_2 )_{\xi_2} \), \( (G_3,\zeta_3 )_{\xi_3} \), and \( (G_4,\zeta_4 )_{\xi_4}\) be soft topological groups. And let
\(
(\varphi_1, \varrho_1) : (G_1,\zeta_1 )_{\xi_1}  \rightarrow(G_2,\zeta_2 )_{\xi_2} \),
\((\varphi_2, \varrho_2) : (G_2,\zeta_2 )_{\xi_2} \rightarrow (G_3,\zeta_3 )_{\xi_3}\) and \((\varphi_3, v_3) :(G_3,\zeta_3 )_{\xi_3}\rightarrow (G_4,\zeta_4 )_{\xi_4}
\)
be  morphisms of soft topological groups.
Since the composition of functions  and the composition of group homomorphisms  are associative, we also have
\begin{align*}
 (\varphi_3, \varrho_3) \circ \left((\varphi_2, \varrho_2) \circ (\varphi_1, \varrho_1)\right)
&= (\varphi_3, \varrho_3) \circ (\varphi_2 \circ \varphi_1,\ \varrho_2 \circ \varrho_1) \\
&=(\varphi_3 \circ (\varphi_2 \circ \varphi_1),\ \varrho_3 \circ (\varrho_2 \circ \varrho_1))\\
&= ((\varphi_3 \circ \varphi_2) \circ \varphi_1,( \varrho_3 \circ \varrho_2) \circ \varrho_1)  \\
&= (\varphi_3 \circ \varphi_2,\ \varrho_3 \circ \varrho_2) \circ (\varphi_1, \varrho_1)\\
&=((\varphi_3, \varrho_3)\circ(\varphi_2, \varrho_2))\circ (\varphi_1, \varrho_1).
\end{align*}
Thus, the composition of the category is associative.

\end{proof}
The category we have obtained here is called the category of soft topological groups and is denoted by $\mathbf{STGrp}$.

\begin{prop}
Let $(G,\zeta)_\xi  $ and $( G',\zeta')_{\xi'}$ be soft topological groups.
Then for the morphism of soft topological groups $(\varphi, \varrho) : (G,\zeta)_\xi  \to ( G',\zeta')_{\xi'}$:

\begin{itemize}
  \item[(i)] If $\varphi$ is an  injective function and $\varrho$ is an injective group homomorphism, then $(\varphi, \varrho)$ is monomorphism.
  \item[(ii)] If $\varphi$ is a surjective function and $\varrho$ is a surjective group homomorphism, then $(\varphi, \varrho)$ is an epimorphism.

\end{itemize}
\end{prop}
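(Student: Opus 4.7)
The plan is to use the componentwise description of composition in $\mathbf{STGrp}$ established in the preceding proposition, and to reduce each claim to ordinary left/right cancellation of set maps and of group homomorphisms. In both parts, soft continuity of the ambient morphism $(\varphi,\varrho)$ plays no role in the cancellation step; only injectivity or surjectivity of its two coordinates matters.

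For (i), I would take an arbitrary parallel pair of morphisms $(\alpha_1,\beta_1),(\alpha_2,\beta_2):(H,\zeta_H)_\eta\to(G,\zeta)_\xi$ in $\mathbf{STGrp}$ and suppose that
\[
(\varphi,\varrho)\circ(\alpha_1,\beta_1)=(\varphi,\varrho)\circ(\alpha_2,\beta_2).
\]
By the componentwise composition formula, this collapses to the two equalities $\varphi\circ\alpha_1=\varphi\circ\alpha_2$ of parameter maps and $\varrho\circ\beta_1=\varrho\circ\beta_2$ of group homomorphisms. Since $\varphi$ is an injective function it is left-cancellable, so $\alpha_1=\alpha_2$; since $\varrho$ is an injective group homomorphism, it is likewise left-cancellable, so $\beta_1=\beta_2$. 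Hence $(\alpha_1,\beta_1)=(\alpha_2,\beta_2)$, and $(\varphi,\varrho)$ is a monomorphism.

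For (ii), I would argue dually. Given a parallel pair $(\psi_1,\sigma_1),(\psi_2,\sigma_2):(G',\zeta')_{\xi'}\to(K,\kappa)_{\eta'}$ satisfying $(\psi_1,\sigma_1)\circ(\varphi,\varrho)=(\psi_2,\sigma_2)\circ(\varphi,\varrho)$, the same componentwise reduction yields $\psi_1\circ\varphi=\psi_2\circ\varphi$ and $\sigma_1\circ\varrho=\sigma_2\circ\varrho$. Surjectivity of $\varphi$ as a set map provides right-cancellability, so $\psi_1=\psi_2$; surjectivity of the group homomorphism $\varrho$ provides right-cancellability for group homomorphisms, so $\sigma_1=\sigma_2$. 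Therefore $(\psi_1,\sigma_1)=(\psi_2,\sigma_2)$, and $(\varphi,\varrho)$ is an epimorphism.

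I do not anticipate any real obstacle: both parts are immediate consequences of the componentwise composition in $\mathbf{STGrp}$ together with the standard cancellation properties of injections and surjections in $\mathbf{Set}$ and in the category of groups. The only point worth flagging is purely notational, namely that one must be careful that the diagrams composed here respect the convention for $\circ$ used in the previous proposition, so that the two coordinates $\varphi$ and $\varrho$ are indeed the ones being cancelled on the appropriate side.
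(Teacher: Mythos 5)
Your proof is correct and follows essentially the same route as the paper's: decompose the composite morphisms componentwise into the parameter map and the group homomorphism, then apply left-cancellability of injections for (i) and right-cancellability of surjections for (ii). Your remark about checking the composition convention is apt, since the paper itself is not entirely consistent on the order of $\circ$, but this does not affect the validity of either argument.
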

\begin{proof}

\begin{itemize}
\item [(i)] Let $(\varphi_1, \varrho_1),(\varphi_2, \varrho_2):(G'',\zeta'')_{\xi''}   \rightarrow  (G,\zeta)_\xi $ be morphisms of soft topological groups such that
\[
(\varphi, \varrho) \circ (\varphi_1, \varrho_1) = (\varphi, \varrho) \circ (\varphi_2, \varrho_2).
\]
In this case,
\[\varphi\circ \varphi_1 =\varphi \circ\varphi_2 \quad \text{and} \quad \varrho \circ \varrho_1 = \varrho \circ \varrho_2.\]
Since $\varphi$ is an injective function and $\varrho$ is an injective group homomorphism, it follows that
\(\varphi_1 = \varphi_2 \quad \text{and} \quad \varrho_1 = \varrho_2.\)
Therefore, $(\varphi_1, \varrho_1) = (\varphi_2, \varrho_2)$, which shows that $(\varphi, \varrho)$ is a monomorphism.
\item[(ii)] Let $(\varphi_3,\varrho_3),(\varphi_4,\varrho_4):(G',\zeta')_{\xi'}  \rightarrow  (G''',\zeta''')_{\xi'''}  $ be morphisms of soft topological groups such that \[(\varphi_3, \varrho_3) \circ (\varphi, f) = (\varphi_4, \varrho_4) \circ (\varphi, \varrho).\] Then, \[(\varphi_3 \circ \varphi, \, \varrho_3 \circ \varrho) = (\varphi_4 \circ \varphi, \ \varrho_4 \circ \varrho),
\]
which implies \(
\varphi_3 \circ \varphi = \varphi_4\circ \varphi \quad \text{and} \quad \varrho_3 \circ \varrho = \varrho_4 \circ \varrho.\)
Since $\varphi$ is a surjective function and $\varrho$ is a surjective group homomorphism, it follows that \(\varphi_3 = \varphi_4\quad \text{and} \quad \varrho_3 = \varrho_4.\)
Therefore, $(\varphi_3, \varrho_3) = (\varphi_4, \varrho_4)$, which shows that $(\varphi, \varrho)$ is an epimorphism.
\end{itemize}
\end{proof}
\begin{thm}
Let $(G,\zeta)_\xi $ and $(G',\zeta')_{\xi'} $ be soft topological groups, and let
\[
(\varphi,\varrho)\colon (G,\zeta)_\xi\to (G',\zeta')_{\xi'}
\]
be a morphism of soft topological groups. If $(K,\zeta_K)_\xi$ is a soft topological subgroup of $(G,\zeta)_\xi$, then
$ (\varrho(K),\zeta_{\varrho(K)})_{\xi'} $
is a soft topological subgroup of $(G',\zeta')_{\xi'}$.
\end{thm}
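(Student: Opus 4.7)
The plan is to verify two things: first, that $\varrho(K)$ is an algebraic subgroup of $G'$, and second, that the soft subspace topology inherited from $(G',\zeta')_{\xi'}$ endows it with the structure of a soft topological group. Once both are in hand, the result follows directly from Proposition \ref{softsubgrp}.

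For the algebraic part, I would rely only on the fact that $\varrho$ is a group homomorphism. Given $y_1,y_2\in \varrho(K)$, choose $k_1,k_2\in K$ with $\varrho(k_i)=y_i$. Since $K$ is a subgroup of $G$, we have $k_1\ast k_2^{-1}\in K$, and applying the homomorphism property of $\varrho$ gives
\[
y_1\ast y_2^{-1}=\varrho(k_1)\ast\varrho(k_2)^{-1}=\varrho(k_1\ast k_2^{-1})\in \varrho(K).
\]
Thus $\varrho(K)$ is a subgroup of $G'$.

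For the topological part, Proposition \ref{softsubgrp} already establishes that any subgroup of a soft topological group, equipped with the soft subspace topology, is itself a soft topological group. Applying this to the subgroup $\varrho(K)$ of the soft topological group $(G',\zeta')_{\xi'}$ immediately yields that $(\varrho(K),\zeta'_{\varrho(K)})_{\xi'}$ is a soft topological subgroup of $(G',\zeta')_{\xi'}$, which is exactly what we want.

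There is essentially no real obstacle here, since the heavy lifting is done by Proposition \ref{softsubgrp} and the rest is the standard verification that the image of a subgroup under a homomorphism is a subgroup. The only point worth flagging is the change of parameter set from $\xi$ to $\xi'$: because $\varrho(K)\subseteq G'$, the correct choice for the soft subspace is the one induced from $\zeta'$ with parameters $\xi'$, and no assumption on $\varphi$ is needed beyond the fact that $(\varphi,\varrho)$ is a morphism. In particular, the soft continuity of $(\varphi,\varrho)$ is not invoked in this argument — only the homomorphism structure of $\varrho$ together with the already-proved Proposition \ref{softsubgrp}.
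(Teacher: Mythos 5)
Your proposal is correct and follows essentially the same route as the paper: the paper's proof likewise observes that $\varrho(K)$ is a subgroup of $G'$ because $\varrho$ is a homomorphism and $K$ is a subgroup, and then invokes Proposition \ref{softsubgrp} to conclude that the soft subspace $(\varrho(K),\zeta'_{\varrho(K)})_{\xi'}$ is a soft topological subgroup. Your version merely spells out the $y_1\ast y_2^{-1}$ computation and the remark that soft continuity of $(\varphi,\varrho)$ is never used, both of which the paper leaves implicit.
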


\begin{proof}
 Since  $\varrho$ is a homomorphism and $K$ is a subgroup of  $G$, the image $\varrho(K)$ is a subgroup of $G'$.

 By Proposition \ref{softsubgrp}, since $\varrho(K)$ is a subgroup of $G'$,
$ (\varrho(K),\zeta_{\varrho(K)})_{\xi'} $ is a soft topological subgroup.
\end{proof}
\begin{prop}
Let  $(\varphi, \varrho) : (G,\zeta)_\xi  \to ( G',\zeta')_{\xi'}$  be a  morphism of soft topological groups.
If $(\varphi, \varrho)$ is a monomorphism, then $\varrho$ is injective.
\end{prop}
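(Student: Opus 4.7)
The plan is to prove the contrapositive: if $\varrho$ fails to be injective, then $(\varphi,\varrho)$ cannot be a monomorphism. I would use $\mathbb{Z}$, equipped with a carefully chosen soft topology sharing the parameter set $\xi$, as a universal test object able to detect any collision $\varrho(a)=\varrho(b)$ with $a\neq b$.

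First I would equip $\mathbb{Z}$ with the soft topology $\zeta''$ consisting of all absolute soft subsets $A_\xi$ for $A\subseteq\mathbb{Z}$; closure under finite soft intersection and arbitrary soft union is inherited from the corresponding set operations, so $\zeta''$ is a valid soft topology. Using Proposition~\ref{topgroup}, this makes $(\mathbb{Z},\zeta'')_\xi$ a soft topological group: for any $m,n\in\mathbb{Z}$ and any soft open $W$ containing $m-n$, the singletons $\{m\}_\xi,\{n\}_\xi$ are soft open and satisfy $\{m\}_\xi\ast\{n\}_\xi^{-1}=\{m-n\}_\xi\widetilde{\sqsubseteq}W$, since $m-n\widetilde{\in}W$.

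Next, assuming $a\neq b$ in $G$ with $\varrho(a)=\varrho(b)$, I would define two group homomorphisms $\varrho_1,\varrho_2:\mathbb{Z}\to G$ by $\varrho_1(n)=a^n$ and $\varrho_2(n)=b^n$, and verify that the pairs $(\mathbb{I}_\xi,\varrho_i):(\mathbb{Z},\zeta'')_\xi\to(G,\zeta)_\xi$ are morphisms in $\mathbf{STGrp}$. Soft continuity at an arbitrary $n$ is immediate, because for any soft open neighborhood $W$ of $\varrho_i(n)$ the soft set $\{n\}_\xi\in\zeta''$ maps under $\varrho_i$ to $\{\varrho_i(n)\}_\xi\widetilde{\sqsubseteq}W$. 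These two morphisms are distinct, since $\varrho_1(1)=a\neq b=\varrho_2(1)$.

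Finally, the compositions $(\varphi,\varrho)\circ(\mathbb{I}_\xi,\varrho_i)=(\varphi,\varrho\circ\varrho_i)$ coincide, because $(\varrho\circ\varrho_i)(n)=\varrho(a)^n=\varrho(b)^n$ on $\mathbb{Z}$ using that $\varrho$ is a group homomorphism. This contradicts the monomorphism hypothesis and forces $\varrho$ to be injective. The delicate step is arranging that the test morphisms actually lie in $\mathbf{STGrp}$: we need a soft topology on $\mathbb{Z}$ that is simultaneously a soft topological group structure and rich enough (containing singletons as soft open sets) for the power maps to be soft continuous; the absolute-discrete topology $\zeta''$ resolves both requirements at once.
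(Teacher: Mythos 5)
Your argument is correct, but it takes a genuinely different route from the paper's. The paper argues internally: it equips $K=\ker(\varrho)$ with the soft subspace topology, checks that both the inclusion $i\colon K\to G$ and the trivial homomorphism $K\to G$ are morphisms of soft topological groups whose composites with $(\varphi,\varrho)$ coincide, and then left-cancels $(\varphi,\varrho)$ to force $i=\varrho_1$, i.e.\ $\ker(\varrho)=\{1_G\}$. You instead build an external separating object: $\mathbb{Z}$ with the soft topology of all absolute soft subsets, which you correctly verify is a soft topological group via Proposition~\ref{topgroup}, and you exploit the freeness of $\mathbb{Z}$ to convert any collision $\varrho(a)=\varrho(b)$ with $a\neq b$ into two distinct morphisms $n\mapsto a^{n}$ and $n\mapsto b^{n}$ that $(\varphi,\varrho)$ fails to separate. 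Both are standard ways of showing that a monomorphism of group-like objects is injective, and your soft-continuity check is sound: $\{n\}_\xi$ is soft open, its soft image under $(\mathbb{I}_\xi,\varrho_i)$ is $\{\varrho_i(n)\}_\xi$, and this is a soft subset of any soft open neighborhood $W$ of $\varrho_i(n)$ because $\varrho_i(n)\widetilde{\in}W$ means $\varrho_i(n)\in W(e)$ for every $e$. What your approach buys is that the test object is independent of $(G,\zeta)_\xi$ and its soft continuity is immediate from discreteness, and that it detects arbitrary collisions directly rather than through the kernel; what the paper's approach buys is that it stays inside the given object and reuses the subspace machinery of Propositions~\ref{restricted} and~\ref{softsubgrp}, at the cost of a separate (and in the paper somewhat delicately worded) verification that the trivial map on $K$ is soft continuous. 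One point worth stating explicitly in your write-up: the morphisms $(\mathbb{I}_\xi,\varrho_1)$ and $(\mathbb{I}_\xi,\varrho_2)$ are unequal in $\mathbf{STGrp}$ precisely because the paper defines equality of morphisms componentwise, so $\varrho_1(1)=a\neq b=\varrho_2(1)$ suffices.
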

\begin{proof}
Let $\mathbb{I}_\xi : \xi \to \xi$ be the identity function, $K = \ker(\varrho)$,  and $i : K \to G$ be the inclusion group homomorphism.  And assume that $\zeta_K$ is the soft subtopology on $K$ induced from $\zeta$.
Then
\[
(\mathbb{I}_\xi, i) : (K,\zeta_K)_\xi\rightarrow (G,\zeta)_\xi
\]
is a morphism of soft topological groups. Here, it is clear from Proposition \ref{restricted} that $(\mathbb{I}_\xi, i)$ is soft continuous.

Let  \(\mathbb{I}_\xi : \xi \to \xi\) be the identity function and \(\varrho_1 : K \to G\) be the trivial group homomorphism.
Now, let us show that
\[(\mathbb{I}_\xi, \varrho_1) :  (K,\zeta_K)_\xi\rightarrow (G,\zeta)_\xi\]
is soft continuous.
 Let $x \in K$, and   $U'$ be any soft open neighborhood of  $\varrho_1(x)=1_G$. Then    $U'_K  $ is a soft open neighborhood of  $x$  in  $ \zeta_K$  such that  $(\mathbb{I}_\xi, \varrho_1)(U'_K) \widetilde{\sqsubseteq} U'$.
Therefore,$(\mathbb{I}_\xi, \varrho_1)$ is soft continuous, and
\[
\begin{tikzcd}
(K,\zeta_K)_\xi \ar[r,shift left=.75ex,"{(\mathbb{I}_\xi, i)}"]
  \ar[r,shift right=.75ex,swap,"{(\mathbb{I}_\xi, \varrho_1 )}"]
&(G,\zeta)_\xi  \ar[r,"{(\varphi, \varrho)}"]
& ( G',\zeta')_{\xi'} .
\end{tikzcd}
\]
Since
\[
(\varphi, \varrho) \circ (\mathbb{I}_{\xi}, i) = (\varphi, \varrho) \circ (\mathbb{I}_{\xi}, \varrho_1)
\]
and \((\varphi, \varrho)\) is monic, it follows that
\[
(\mathbb{I}_{\xi}, i) = (\mathbb{I}_{\xi}, \varrho_1),
\]
and hence
\[
i =\varrho_1.
\]
Finally, let us show that \( \varrho \) is injective. Let \( x, y \in K = \ker(\varrho) \), and suppose that \( \varrho(x) = \varrho(y)=1_G \). Then

\[
x=i(x) = \varrho_1(x)=\varrho_1(y)=i(y)=y.
\]
From this it follows that \( x = y \), so \( K = \ker(\varrho) = \{ 1_G \} \) is a singleton. Therefore, \( \varrho \) is injective.
\end{proof}

\begin{prop}
Let   $(\varphi,\varrho) : (G,\zeta)_\xi  \to ( G',\zeta')_{\xi'}$
  be a morphism of soft topological groups.
If $(\varphi, \varrho)$ is an epimorphism, then $\varrho$ is surjective.
\end{prop}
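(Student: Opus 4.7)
The plan is to prove the contrapositive: if $\varrho$ is not surjective, then $(\varphi,\varrho)$ fails to be an epimorphism in $\mathbf{STGrp}$. Set $H = \varrho(G) \subsetneq G'$. I will construct a soft topological group $(G'',\zeta'')_{\xi''}$ together with two distinct morphisms $(\varphi_3,\varrho_3),(\varphi_4,\varrho_4)\colon (G',\zeta')_{\xi'} \to (G'',\zeta'')_{\xi''}$ whose post-compositions with $(\varphi,\varrho)$ agree, producing the required contradiction.

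To avoid wrestling with soft continuity on the target, I would equip $G''$ with the soft indiscrete topology $\zeta'' = \{\emptyset_{\xi''}, G''_{\xi''}\}$ over any chosen parameter set $\xi''$. Then $(G'',\zeta'')_{\xi''}$ is automatically a soft topological group by Proposition \ref{topgroup}, since $G''_{\xi''}$ is the only non-empty soft open set; for the same reason, any pair consisting of a function $\varphi_i\colon \xi' \to \xi''$ and a group homomorphism $\varrho_i\colon G' \to G''$ is automatically a soft continuous morphism. In particular, I may take $\varphi_3 = \varphi_4$ to be any convenient function, reducing the problem to a purely group-theoretic one: exhibit a group $G''$ and two distinct homomorphisms $\varrho_3,\varrho_4\colon G' \to G''$ that agree on $H$.

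When $H$ is normal in $G'$, the construction is short: take $G'' = G'/H$, let $\varrho_3$ be the canonical projection, and let $\varrho_4$ be the trivial homomorphism. Both send $H$ to the identity, so $\varrho_3\circ\varrho = \varrho_4\circ\varrho$, yet $\varrho_3 \neq \varrho_4$ because $H \neq G'$. For the general case I would invoke the classical symmetric-group argument used to prove that epimorphisms in $\mathbf{Grp}$ are surjective: let $G''$ be the symmetric group on the left coset space $G'/H$, let $\varrho_3$ be the left regular action $g \mapsto (xH \mapsto gxH)$, and let $\varrho_4 = \sigma\,\varrho_3\,\sigma^{-1}$, where $\sigma$ is a transposition swapping $eH$ with some other fixed coset. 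A direct stabilizer computation shows that $\varrho_3$ and $\varrho_4$ coincide precisely on the stabilizer of $eH$, which is $H$.

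The main obstacle is the non-normal case, where the quick quotient argument is unavailable and the symmetric-group construction must be carried out carefully: one needs $|G'/H| \geq 2$ (which follows from $H \subsetneq G'$) and must choose the transposition $\sigma$ so that some $g \in G' \setminus H$ witnesses $\varrho_3(g) \neq \varrho_4(g)$. Once $\varrho_3,\varrho_4$ are in hand, the induced morphisms satisfy $(\varphi_3,\varrho_3)\circ(\varphi,\varrho) = (\varphi_4,\varrho_4)\circ(\varphi,\varrho)$, because their underlying homomorphisms agree on $H = \mathrm{im}(\varrho)$, while $(\varphi_3,\varrho_3) \neq (\varphi_4,\varrho_4)$ as morphisms, contradicting the epimorphism hypothesis and completing the contrapositive.
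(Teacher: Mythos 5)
Your overall strategy is the same as the paper's in its main case: both arguments reduce the claim to pure group theory by putting the soft indiscrete topology on the target (so that any group homomorphism paired with any parameter function is automatically a soft continuous morphism), and both then test the epimorphism against the canonical projection $G'\to G'/\operatorname{Im}(\varrho)$ versus the trivial map. You are in fact more careful than the paper here: the paper forms the quotient $H/K$ with $K=\operatorname{Im}(\varrho)$ without addressing whether $K$ is normal, and then closes by invoking the classical fact that group epimorphisms are surjective. Your proposal correctly isolates normality as the obstruction and tries to handle the non-normal case separately.

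However, your repair of the non-normal case contains a genuine gap. If $G''=\mathrm{Sym}(G'/H)$, $\varrho_3$ is the left-translation action, and $\sigma$ is a transposition exchanging $eH$ with another coset $aH$, then $\varrho_4=\sigma\varrho_3\sigma^{-1}$ does \emph{not} agree with $\varrho_3$ on $H$. Agreement of $\varrho_3(h)$ and $\varrho_4(h)$ means that $\varrho_3(h)$ commutes with $\sigma$, which is strictly stronger than $h$ stabilizing $eH$: for $h\in H$ the permutation $\varrho_3(h)$ fixes $eH$ but in general moves $aH$, and then $\varrho_4(h)(eH)=\sigma(haH)$, which differs from $eH=\varrho_3(h)(eH)$ whenever $haH\notin\{eH,aH\}$. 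So $\varrho_3\circ\varrho\neq\varrho_4\circ\varrho$, the two post-compositions with $(\varphi,\varrho)$ need not coincide, and no contradiction with the epimorphism hypothesis is obtained. The standard fix is to take the symmetric group on $G'/H\sqcup\{\ast\}$, let $\varrho_3$ act by left translation on the cosets and fix $\ast$, and let $\sigma$ be the transposition interchanging $eH$ and $\ast$; then for $h\in H$ the permutation $\varrho_3(h)$ fixes both points moved by $\sigma$ and hence commutes with it, while for $g\notin H$ one has $\varrho_4(g)(eH)=eH\neq gH=\varrho_3(g)(eH)$. With that correction (and your indiscrete-topology device, which is sound under the paper's Definition \ref{cont}), the contrapositive argument goes through and yields a proof that is actually more complete than the one in the paper.
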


\begin{proof}
Let \( K = \operatorname{Im}(f) \). Let
\(\varrho_1: H \to H/K\) be the canonical epimorphism and let
\(\varrho_2: H \to H/K\) be the trivial group homomorphism. Define the soft indiscrete topology on \( H/K \) as
\(\zeta'' = \{ \emptyset_{\xi'}, {H/K}_{\xi'} \}.\)
Then
\[
(\mathbb{I}_\xi, \varrho_1),(\mathbb{I}_\xi, \varrho_2):( G',\zeta')_{\xi'} \to ( H/K, \zeta'')_{\xi'} \]
 are morphisms of soft topological groups, and

\[
\begin{tikzcd}
(G,\zeta)_\xi   \ar[r,"{(\varphi, \varrho)}"]& (G',\zeta')_{\xi'}  \ar[r,shift left=.75ex,"{(\mathbb{I}_\xi, \varrho_1)}"]
  \ar[r,shift right=.75ex,swap,"{(\mathbb{I}_\xi, \varrho_2 )}"]
& ( H/K, \zeta'')_{\xi'}.
\end{tikzcd}
\]
If \((\mathbb{I}_{\xi}, \varrho_1) \circ (\varphi, \varrho) = (\mathbb{I}_{\xi}, \varrho_2) \circ (\varphi, \varrho) \), then \[(\mathbb{I}_{\xi}\circ \varphi, \varrho_1\circ \varrho)  = (\mathbb{I}_{\xi} \circ \varphi,  \varrho_2\circ \varrho),\] and thus
 \begin{equation} \label{epic2}
\varrho_1\circ \varrho = \varrho_2\circ \varrho.
\end{equation}

Moreover, since
 \[
(\mathbb{I}_{\xi}, \varrho_1) \circ (\varphi, \varrho) = (\mathbb{I}_{\xi}, \varrho_2) \circ (\varphi, \varrho)
\] and  \((\varphi, \varrho)\) is epic, it follows that  \[
(\mathbb{I}_{\xi}, \varrho_1)  = (\mathbb{I}_{\xi}, \varrho_2).
\]
and hence
\begin{equation} \label{epic1}
\varrho_1 = \varrho_2.
\end{equation}
From \eqref{epic2}  and \eqref{epic1}, it follows that \( \varrho \) is a group homomorphism which is an epimorphism.
Since every group epimorphism is surjective, \( \varrho \) is surjective.
\end{proof}

\begin{prop}
Let \( (G_1,\zeta_1 )_{\xi_1}  \) and \( (G_2,\zeta_2 )_{\xi_2}\) be soft topological groups.
Let \( \zeta_\times \) be the soft product topology induced from the collection \(\{ W_1 \widetilde{\times} W_2 \mid W_i \in \zeta_i,\ i = 1, 2\}.\)
Then
\[(G_1 \times G_2, \zeta_\times)_ {\xi_1 \times \xi_2}\]
is a soft topological group where \( G_1 \times G_2 \) is the direct product group and \( \xi_1 \times \xi_2 \) is the Cartesian product of the parameter sets.
\end{prop}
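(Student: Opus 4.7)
The plan is to verify the characterization of soft topological groups given in Proposition \ref{topgroup}: it suffices to show that for every $(a_1,a_2),(b_1,b_2)\in G_1\times G_2$ and every soft open neighborhood $W$ of $(a_1,a_2)\ast(b_1,b_2)^{-1}=(a_1\ast b_1^{-1},\,a_2\ast b_2^{-1})$ in $\zeta_\times$, there exist soft open neighborhoods $U$ of $(a_1,a_2)$ and $V$ of $(b_1,b_2)$ with $U\ast V^{-1}\widetilde{\sqsubseteq}W$. Working with this formulation avoids having to separately check continuity of the group operation and of the inverse map.

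First, I would use that $\{W_1\widetilde{\times}W_2\mid W_i\in\zeta_i\}$ is a soft base for $\zeta_\times$. Thus there exist $W_1\in\zeta_1$ and $W_2\in\zeta_2$ with
\[
(a_1\ast b_1^{-1},\,a_2\ast b_2^{-1})\,\widetilde{\in}\, W_1\widetilde{\times}W_2\,\widetilde{\sqsubseteq}\,W,
\]
which unpacks into $a_1\ast b_1^{-1}\,\widetilde{\in}\,W_1$ and $a_2\ast b_2^{-1}\,\widetilde{\in}\,W_2$. Now I would apply Proposition \ref{topgroup} to each factor: since $(G_i,\zeta_i)_{\xi_i}$ is a soft topological group for $i=1,2$, there exist soft open neighborhoods $U_i$ of $a_i$ and $V_i$ of $b_i$ in $\zeta_i$ such that $U_i\ast V_i^{-1}\widetilde{\sqsubseteq}W_i$.

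The next step is to assemble these into product neighborhoods. Set $U:=U_1\widetilde{\times}U_2$ and $V:=V_1\widetilde{\times}V_2$; both are basic soft open sets in $\zeta_\times$, containing $(a_1,a_2)$ and $(b_1,b_2)$ respectively. Using that the group operation on $G_1\times G_2$ is componentwise, I would compute
\[
U\ast V^{-1}=(U_1\widetilde{\times}U_2)\ast(V_1\widetilde{\times}V_2)^{-1}=(U_1\ast V_1^{-1})\,\widetilde{\times}\,(U_2\ast V_2^{-1})\,\widetilde{\sqsubseteq}\,W_1\widetilde{\times}W_2\,\widetilde{\sqsubseteq}\,W,
\]
which is exactly the required inclusion. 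Hence $(G_1\times G_2,\zeta_\times)_{\xi_1\times\xi_2}$ is a soft topological group.

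The main obstacle I expect is bookkeeping rather than mathematics: one must be careful that the parameter set for $\zeta_\times$ is $\xi_1\times\xi_2$, so the soft neighborhoods $U$ and $V$ live over this product parameter set, and that operations like $(V_1\widetilde{\times}V_2)^{-1}=V_1^{-1}\widetilde{\times}V_2^{-1}$ and $(U_1\widetilde{\times}U_2)\ast(V_1^{-1}\widetilde{\times}V_2^{-1})=(U_1\ast V_1^{-1})\widetilde{\times}(U_2\ast V_2^{-1})$ hold parameterwise. Once the componentwise behavior of $\widetilde{\times}$ with respect to $\ast$ and inversion is verified from the definitions of the soft product and the direct product group, the argument closes.
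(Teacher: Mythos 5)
Your proposal is correct and follows essentially the same route as the paper: both invoke the characterization of Proposition \ref{topgroup} in each factor to obtain $U_i, V_i$ with $U_i \ast V_i^{-1} \widetilde{\sqsubseteq} W_i$, and then pass to the products $U_1\widetilde{\times}U_2$ and $V_1\widetilde{\times}V_2$. If anything, your version is slightly more complete, since you make explicit the reduction from an arbitrary soft open $W\in\zeta_\times$ to a basic set $W_1\widetilde{\times}W_2$, a step the paper's proof skips by working with basic sets from the outset.
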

\begin{proof}
    Since  \(( G_1,\zeta_1 )_{\xi_1}  \) is a soft topological group, for any \( a_1, b_1 \in G_1 \) and any soft open set \( W_1 \in \zeta_1 \) such that \( a_1b_1^{-1}  \widetilde{\in}W_1 \),
there are soft open sets \( U_1, V_1 \in \zeta_1 \) such that
\[a_1  \widetilde{\in}U_1, \quad b_1  \widetilde{\in} V_1, \quad \text{and} \quad U_1 (V_1)^{-1}  \widetilde{\sqsubseteq}W_1.\]

Similarly, since \( (G_2,\zeta_2 )_{\xi_2}\) is a soft topological group,
for any \( a_2, b_2 \in G_2 \) and any soft open set \( W_2 \in \zeta_2 \) such that \( a_2 b_2^{-1}  \widetilde{\in}W_2 \),
there exist soft open sets \( U_2, V_2 \in \zeta_2 \) such that
\[a_2  \widetilde{\in}U_2, \quad b_2  \widetilde{\in}V_2, \quad \text{and} \quad U_2 (V_2)^{-1} \widetilde{\sqsubseteq }W_2.\]

\( (a_1, a_2), (b_1, b_2) \in G_1 \times G_2 \).
Then \( (a_1 b_1^{-1}, a_2 b_2^{-1}) \in G_1 \times G_2 \).
Moreover, since \(a_1 \widetilde{\in}U_1, \quad b_1  \widetilde{\in}V_1, \quad a_2  \widetilde{\in}U_2, \quad b_2  \widetilde{\in}V_2,\)
it follows that
\[(U_1 (V_1)^{-1}) \widetilde{\times} (U_2 (V_2)^{-1})  \widetilde{\sqsubseteq}W_1\widetilde{\times} W_2.\]
This shows that
\[
(G_1 \times G_2, \zeta_\times)_ {\xi_1 \times \xi_2}
\]
is a soft topological group.

\end{proof}
The soft topological group
$(G_1 \times G_2, \zeta_\times)_ {\xi_1 \times \xi_2}$
is called the \textbf{soft product topological group}.
\begin{thm}\label{projectionmap}
 Let \(( G_1,\zeta_1 )_{\xi_1}  \) and \( (G_2,\zeta_2 )_{\xi_2}\) be two soft topological groups,
and let $(G_1 \times G_2, \zeta_\times)_ {\xi_1 \times \xi_2}$
be the  soft product topological group. Define  projection maps:
\[P_{\xi_1} : \xi_1 \times \xi_2 \to \xi_1, \quad q_{G_1} : G_1 \times G_2 \to G_1,\]
\[P_{\xi_2} : \xi_1 \times \xi_2 \to \xi_2, \quad q_{G_2} : G_1 \times G_2 \to G_2.\]
Then the pairs
\[
(P_{\xi_1}, q_{G_1}) : (G_1 \times G_2, \zeta_\times)_ {\xi_1 \times \xi_2} \to (G_1,\zeta_1 )_{\xi_1}
\]
and
\[
(P_{\xi_2}, q_{G_2}) : (G_1 \times G_2, \zeta_\times)_ {\xi_1 \times \xi_2} \to  (G_2,\zeta_2 )_{\xi_2}
\]
are morphisms of soft topological groups.

\end{thm}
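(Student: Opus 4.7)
The plan is to verify, for each projection, the two conditions defining a morphism in $\mathbf{STGrp}$: the underlying map on the group must be a group homomorphism, and the associated soft mapping must be soft continuous. By the symmetry between the two coordinates, it suffices to treat $(P_{\xi_1},q_{G_1})$; the argument for $(P_{\xi_2},q_{G_2})$ is identical after relabelling.

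First, $q_{G_1}\colon G_1\times G_2\to G_1$ is the canonical projection from a direct product of groups, which is a standard group homomorphism, so this part is immediate. For soft continuity, I will use the general definition of a soft continuous mapping (not the restricted form with $\varphi=\mathbb{I}_\xi$, since here $P_{\xi_1}$ is genuinely not the identity). Fix $(a_1,a_2)\in G_1\times G_2$ and let $W$ be an arbitrary soft open neighborhood of $q_{G_1}(a_1,a_2)=a_1$ in $\zeta_1$. The natural witness is the "soft tube"
\[
U \;=\; W\,\widetilde{\times}\,(G_2)_{\xi_2},
\]
where $(G_2)_{\xi_2}$ is the absolute soft set on $G_2$, which belongs to $\zeta_2$. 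By the definition of the soft product topology, $U\in\zeta_\times$, and clearly $(a_1,a_2)\,\widetilde{\in}\,U$.

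It then remains to compute the soft image of $U$ under $(P_{\xi_1},q_{G_1})$. Using the image formula with parameter function $P_{\xi_1}\colon \xi_1\times\xi_2\to\xi_1$, for each $d\in\xi_1$ one obtains
\[
q_{G_1}(U)_{\xi_1}(d) \;=\; \bigcup_{P_{\xi_1}(e_1,e_2)=d} q_{G_1}\bigl(U(e_1,e_2)\bigr) \;=\; \bigcup_{e_2\in\xi_2} q_{G_1}\bigl(W(d)\times G_2\bigr) \;=\; W(d),
\]
so the soft image coincides with $W$ on the nose; in particular $(P_{\xi_1},q_{G_1})(U)\,\widetilde{\sqsubseteq}\,W$, which is the required soft continuity condition at $(a_1,a_2)$. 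Since the point was arbitrary, $(P_{\xi_1},q_{G_1})$ is soft continuous, hence a morphism of soft topological groups, and the same argument with the coordinates swapped gives the claim for $(P_{\xi_2},q_{G_2})$.

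No serious obstacle is expected here; the only point that needs care is the bookkeeping in the image formula, because $P_{\xi_1}$ is not the identity parameter map. One must recognise that the fibre $P_{\xi_1}^{-1}(d)=\{d\}\times\xi_2$ indexes a union whose integrand is constant in $e_2$, so the union collapses cleanly to $W(d)$. Once this is handled, the remaining steps are formal.
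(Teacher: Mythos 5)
Your proposal is correct and follows essentially the same route as the paper: both prove soft continuity of the projection by taking a basic product (``tube'') soft open set whose first factor is the given neighborhood $W$ of $a_1$ and observing that its soft image under $(P_{\xi_1},q_{G_1})$ is contained in $W$. You are in fact slightly more careful than the paper, since you specify the second factor explicitly as the absolute soft set $(G_2)_{\xi_2}$ (the paper uses an unexplained $V_1$) and you verify the collapse of the union over the fibre $P_{\xi_1}^{-1}(d)=\{d\}\times\xi_2$ in the soft image formula.
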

\begin{proof}
 It is known that the maps
\(
q_{G_1} : G_1 \times G_2 \to G_1 \quad \text{and} \quad q_{G_2} : G_1 \times G_2 \to G_2
\)
are group homomorphisms. Let us show that the morphism
\[
(P_{\xi_1}, q_{G_1}) : (G_1 \times G_2, \zeta_\times)_ {\xi_1 \times \xi_2} \to (G_1,\zeta_1 )_{\xi_1}
\]
is soft continuous.

Let \( (x, y) \in G_1 \times G_2 \). Since
\(
q_{G_1}(x, y) = x,
\)
for every soft neighborhood $U_1$ of $q_{G_1}(x,y) = x$, we have
\[
x=q_{G_1}(x,y) \widetilde{\in} (P_{\xi_1},q_{G_1})(U_1 \widetilde{\times} V_1) =U_1  \widetilde{\sqsubseteq}U_1.
\]
So, $(P_{\xi_1}, q_{G_1})$ is soft continuous.
Similarly, one can show that
\[
(P_{\xi_2}, q_{G_2}) : \langle G_1 \times G_2, \tau, \xi_1 \times \xi_2 \rangle \to \langle G_2, \tau_2, \xi_2 \rangle
\]
is also soft continuous.
\end{proof}
\begin{thm}
 Let \( (G_1,\zeta_1 )_{\xi_1}  \) and \( (G_2,\zeta_2 )_{\xi_2}\) be any two objects in $\mathbf{STGrp}$. Then, the product of \( (G_1,\zeta_1 )_{\xi_1}  \) and \( (G_2,\zeta_2 )_{\xi_2}\) in $\mathbf{STGrp}$ is $$((G_1 \times G_2, \zeta_\times)_ {\xi_1 \times \xi_2}, (P_{\xi_i}, q_{G_i})),$$ for $i=1,2$.
\end{thm}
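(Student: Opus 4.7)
The plan is to verify the universal property of a categorical product. Given any soft topological group $(H,\zeta_H)_{\xi_H}$ together with morphisms $(\varphi_1,\varrho_1)\colon (H,\zeta_H)_{\xi_H}\to (G_1,\zeta_1)_{\xi_1}$ and $(\varphi_2,\varrho_2)\colon (H,\zeta_H)_{\xi_H}\to (G_2,\zeta_2)_{\xi_2}$, I would construct a unique morphism $(\varphi,\varrho)\colon (H,\zeta_H)_{\xi_H}\to (G_1\times G_2,\zeta_\times)_{\xi_1\times\xi_2}$ satisfying $(P_{\xi_i},q_{G_i})\circ(\varphi,\varrho)=(\varphi_i,\varrho_i)$ for $i=1,2$, and show it is the only such morphism.

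The natural candidate is the pair $(\varphi,\varrho)$ defined by $\varphi(e)=(\varphi_1(e),\varphi_2(e))$ for each $e\in\xi_H$ and $\varrho(h)=(\varrho_1(h),\varrho_2(h))$ for each $h\in H$. The universal property of the cartesian product of sets immediately makes $\varphi$ the unique function $\xi_H\to\xi_1\times\xi_2$ with $P_{\xi_i}\circ\varphi=\varphi_i$, and the universal property of the direct product of groups ensures that $\varrho$ is the unique group homomorphism $H\to G_1\times G_2$ with $q_{G_i}\circ\varrho=\varrho_i$ (since each $\varrho_i$ is a homomorphism, so is $\varrho$). This takes care of both existence of the algebraic data and uniqueness once soft continuity is established.

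The main obstacle is verifying that $(\varphi,\varrho)$ is soft continuous, since the paper's definition of soft continuity is local in terms of neighborhoods rather than inverse images of soft open sets. I would proceed as follows: fix $h\in H$ and let $W$ be a soft open neighborhood of $\varrho(h)=(\varrho_1(h),\varrho_2(h))$ in $\zeta_\times$. Because $\zeta_\times$ is generated by the soft base $\{W_1\widetilde{\times}W_2\mid W_i\in\zeta_i\}$, there exist $W_1\in\zeta_1$ and $W_2\in\zeta_2$ with
\[
(\varrho_1(h),\varrho_2(h))\,\widetilde{\in}\,W_1\widetilde{\times}W_2\,\widetilde{\sqsubseteq}\,W,
\]
which in particular gives $\varrho_i(h)\,\widetilde{\in}\,W_i$ for $i=1,2$. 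Applying soft continuity of $(\varphi_i,\varrho_i)$, I obtain soft open neighborhoods $U_1,U_2$ of $h$ in $\zeta_H$ such that $(\varphi_i,\varrho_i)(U_i)\widetilde{\sqsubseteq}W_i$. Then $U:=U_1\widetilde{\sqcap}U_2$ is a soft open neighborhood of $h$ (both lie in $\zeta_H$ with the same parameter set $\xi_H$), and by construction $(\varphi,\varrho)(U)\widetilde{\sqsubseteq}W_1\widetilde{\times}W_2\widetilde{\sqsubseteq}W$. Since $h$ was arbitrary, $(\varphi,\varrho)$ is soft continuous and hence a morphism in $\mathbf{STGrp}$.

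Finally, the commutativity identities $(P_{\xi_i},q_{G_i})\circ(\varphi,\varrho)=(\varphi_i,\varrho_i)$ hold componentwise by the very definition of $\varphi$ and $\varrho$. For uniqueness, any morphism $(\varphi',\varrho')$ satisfying these identities must obey $P_{\xi_i}(\varphi'(e))=\varphi_i(e)$ and $q_{G_i}(\varrho'(h))=\varrho_i(h)$ for all $e\in\xi_H$ and $h\in H$, which forces $\varphi'=\varphi$ and $\varrho'=\varrho$. Thus $((G_1\times G_2,\zeta_\times)_{\xi_1\times\xi_2},(P_{\xi_i},q_{G_i}))$ is a product of $(G_1,\zeta_1)_{\xi_1}$ and $(G_2,\zeta_2)_{\xi_2}$ in $\mathbf{STGrp}$.
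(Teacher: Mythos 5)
Your proposal follows essentially the same route as the paper: verify the universal property by constructing the pairing morphism $\bigl((\varphi_1,\varphi_2),(\varrho_1,\varrho_2)\bigr)$, check its soft continuity against basic soft open sets $W_1\widetilde{\times}W_2$ of $\zeta_\times$, and derive uniqueness from the projection identities. Your handling of the continuity step is in fact slightly tidier than the paper's, since you take the soft open neighborhood $U_1\widetilde{\sqcap}U_2$ of $h$ in the domain $(H,\zeta_H)_{\xi_H}$ (where the paper instead writes the product $U_1\widetilde{\times}U_2$, which lives over the wrong parameter set), and you invoke both projections in the uniqueness argument rather than only the first.
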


\begin{proof}
 Let $ (H, \sigma)_\xi $ be an object in $\mathbf{STGrp}$, and let $(\varphi_1, \varrho_1): (H, \sigma)_\xi\to (G_1,\zeta_1 )_{\xi_1}$ and $(\varphi_2, \varrho_2): (H, \sigma)_\xi \to (G_2,\zeta_2 )_{\xi_2}$ be morphisms of soft topological groups. Consider the maps
\(
(\varphi_1, \varphi_2) : \xi \to \xi_1 \times \xi_2,
\)
defined by
\(
(\varphi_1, \varphi_2)(e) = (\varphi_1(e), \varphi_2(e))
\), which is a function, and \(
(\varrho_1, \varrho_2) : H \to G_1 \times G_2,
\)
defined by
\(
(\varrho_1, \varrho_2)(h) = (\varrho_1(h), \varrho_2(h))
\), which is a group homomorphism.
Now we can define the soft mapping
\[
(\vartheta, \chi) = \big( (\varphi_1, \varphi_2), (\varrho_1, \varrho_2) \big) :( H, \sigma)_ \xi \to ( G_1 \times G_2, \zeta_\times)_ {\xi_1 \times \xi_2}
\] by using the maps $(\varphi_1, \varphi_2)$ and $(\varrho_1, \varrho_2)$.
Since $(\varphi_1, \varrho_1)$ and $(\varphi_2, \varrho_2)$ are soft continuous, for every \( h \in H \) and for every soft neighborhood \( W_1 \) of \( \varrho_1(h) \), there is a soft neighborhood \( U_1 \) such that
\[
\varrho_1(h)  \widetilde{\in}(\varphi_1, \varrho_1)(U_1)  \widetilde{\sqsubseteq}W_1.
\]

Similarly, for every \( h \in H \) and every soft neighborhood \( W_2 \) of \( \varrho_2(h) \), there is a soft neighborhood \( U_2 \) such that
\[
\varrho_2(h)  \widetilde{\in}(\varphi_2, \varrho_2)(U_2) \widetilde{\sqsubseteq }W_2.
\]

Hence, for every \( h \in H \) and every soft neighborhood \( (U_1\widetilde{\times} U_2) \) of
\(
\chi(h) = (\varrho_1, \varrho_2)(h) = (\varrho_1(h), \varrho_2(h)),
\)
we have
\[
(\varrho_1, \varrho_2)(h)  \widetilde{\in}((\varphi_1, \varrho_1), (\varphi_2, \varrho_2))(U_1\widetilde{\times} U_2)  \widetilde{\sqsubseteq } W_1\widetilde{\times} W_2.
\]

Therefore, the map \( (\vartheta, \chi) \) is soft continuous, which shows the existence of the soft topological group morphism \( (\vartheta, \chi) \).

Let \( (\gamma, \theta): ( H, \sigma)_ \xi \to ( G_1 \times G_2, \zeta_\times)_ {\xi_1 \times \xi_2} \)
be another soft topological group morphism such that
\[
(P_{\xi_1}, q_{G_1}) \circ (\gamma, \theta) = (\varphi_1, \varrho_1) \quad \text{and} \quad (P_{\xi_2}, q_{G_2}) \circ (\gamma, \theta) = (\varphi_2, \varrho_2).
\]
Since
\(
P_{\xi_1} \circ \gamma = P_{\xi_1} \circ \vartheta,
\)
it follows that \( \gamma = \vartheta \), and since
\(
q_{G_1} \circ \theta = q_{G_1} \circ \chi,
\)
it follows that \( \theta = \chi \).
Therefore, \( (\vartheta, \chi) \) is unique.

\[
\begin{tikzpicture}
  \node (s) {$( H, \sigma)_ \xi$};
  \node (xy) [below=2 of s] {$( G_1 \times G_2, \zeta_\times)_ {\xi_1 \times \xi_2}  $};
  \node (x) [left=2 of xy] {$( (G_1,\zeta_1 )_{\xi_1}$};
  \node (y) [right=2 of xy] {$( (G_2,\zeta_2 )_{\xi_2}$};
  \draw[->] (s) to node [sloped, above] {$(\varphi_1,\varrho_1)$} (y);
  \draw[<-] (x) to node [sloped, above] {$(\varphi_2,\varrho_2)$} (s);
  \draw[->, dashed] (s) to node {$(\vartheta, \chi)$} (xy);
  \draw[->] (xy) to node [below] {$(P_{\xi_1}, q_{G_1})$} (x);
  \draw[->] (xy) to node [below] {$(P_{\xi_2}, q_{G_2})$} (y);
\end{tikzpicture}
\]

\end{proof}

\begin{thm}
    Let \( \{1_G\} \) be the trivial group and \( \{e\} \) be a singleton set.
Consider the soft indiscrete topology \( \zeta \) on \( \{1_G\} \).
Then the triple \( ( \{1_G\}, \zeta)_{\{e\}} \) is the terminal object in the category of soft topological groups  \textbf{STGrp}.
\end{thm}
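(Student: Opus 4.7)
The plan is to show that for any object $(G,\zeta')_\xi$ in \textbf{STGrp}, there exists a unique morphism $(\varphi,\varrho)\colon (G,\zeta')_\xi \to (\{1_G\},\zeta)_{\{e\}}$, which is the universal property of a terminal object. Recall that a morphism in \textbf{STGrp} consists of a function between parameter sets and a group homomorphism between underlying groups, subject to the soft continuity condition.

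First I would observe that the existence and uniqueness of the underlying set-level data is immediate. Since $\{e\}$ is a singleton, there is exactly one function $\varphi\colon \xi \to \{e\}$, namely the constant function sending every parameter of $\xi$ to $e$. Since $\{1_G\}$ is the trivial group, there is exactly one group homomorphism $\varrho\colon G \to \{1_G\}$, namely the trivial homomorphism $\varrho(g)=1_G$ for every $g\in G$. Thus if a morphism exists, it must be this pair $(\varphi,\varrho)$, and uniqueness is automatic.

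Next I would verify that this pair $(\varphi,\varrho)$ is in fact a morphism of \textbf{STGrp}, i.e.\ that it is soft continuous. Fix any $g\in G$; then $\varrho(g)=1_G$. The codomain carries the soft indiscrete topology $\zeta=\{\emptyset_{\{e\}},\{1_G\}_{\{e\}}\}$, so the only soft open neighborhood of $1_G$ is the absolute soft set $\{1_G\}_{\{e\}}$. Taking $W=G_\xi\in\zeta'$ as a soft open neighborhood of $g$, we compute $(\varphi,\varrho)(W) \widetilde{\sqsubseteq} \{1_G\}_{\{e\}}$ since every value of $\varrho$ equals $1_G$ and every value of $\varphi$ equals $e$. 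Hence the soft continuity criterion is satisfied at every point $g\in G$, and therefore $(\varphi,\varrho)$ is a morphism of soft topological groups.

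Combining these two steps, for every object $(G,\zeta')_\xi$ of \textbf{STGrp} there exists exactly one morphism into $(\{1_G\},\zeta)_{\{e\}}$, so this object is terminal. I do not anticipate any genuine obstacle: the argument is entirely formal, the only point requiring mild care being the bookkeeping around the parameter map $\varphi$ since $\xi$ and $\{e\}$ need not coincide, but the indiscreteness of $\zeta$ collapses any potential subtlety about preimages of soft open sets.
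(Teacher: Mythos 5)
Your proposal is correct and follows essentially the same route as the paper: take the constant parameter map and the trivial group homomorphism, note that both are forced by the codomain being a singleton and the trivial group respectively, and observe that soft continuity is automatic because the indiscrete topology has no nontrivial soft open neighborhoods of $1_G$. The paper states this more tersely, while you spell out the continuity check explicitly, but the content is identical.
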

\begin{proof}
    Let \( ( H, \sigma)_ \xi  \) be an arbitrary soft topological group. And let \( \varrho : H \to \{1_G\} \) be the trivial group homomorphism and \( \varphi : \xi \to \{e_1\} \) be the constant function.
Then there exists a unique morphism
\[
( H, \sigma)_ \xi  \xrightarrow{(\varphi, \varrho)}( \{1_G\}, \zeta)_{\{e\}}
\]
in the category \textbf{STGrp}.  Here, since \( \zeta \) is the soft indiscrete topology, the morphism \( (\varphi, \varrho) \) is soft continuous.
Therefore, \(  \{1_G\}, \zeta)_{\{e\}} \) is the terminal object in the category \textbf{STGrp}.
\end{proof}

\begin{cor}
  The category  \textbf{STGrp} is a symmetric monoidal category.
\end{cor}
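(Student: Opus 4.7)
The plan is to deduce this corollary directly from the structural results already established in the preceding section. Recall that a category with finite products (equivalently, one possessing both a terminal object and binary products) carries a canonical symmetric monoidal structure, called the \emph{cartesian monoidal} structure; this is the fact cited from \cite{Baez} in the introduction. Hence it suffices to assemble the pieces already proved about $\mathbf{STGrp}$ and invoke this general principle.

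First, I would recall the two ingredients: the theorem preceding this corollary shows that $(\{1_G\},\zeta)_{\{e\}}$ is a terminal object in $\mathbf{STGrp}$, and the theorem before that exhibits the soft product topological group $(G_1\times G_2,\zeta_\times)_{\xi_1\times\xi_2}$ together with the projection morphisms $(P_{\xi_i},q_{G_i})$ as the categorical product of $(G_1,\zeta_1)_{\xi_1}$ and $(G_2,\zeta_2)_{\xi_2}$. Therefore $\mathbf{STGrp}$ possesses all finite products.

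Next, I would set up the monoidal data: take $\otimes$ to be the soft product functor $(G_1,\zeta_1)_{\xi_1}\otimes(G_2,\zeta_2)_{\xi_2}=(G_1\times G_2,\zeta_\times)_{\xi_1\times\xi_2}$, extended to morphisms by the universal property of the product; take the unit object to be the terminal object $(\{1_G\},\zeta)_{\{e\}}$. The associator $\alpha$, the left and right unitors $\lambda,\rho$, and the braiding $\sigma$ defined by $\sigma_{G_1,G_2}=\langle (P_{\xi_2},q_{G_2}),(P_{\xi_1},q_{G_1})\rangle$ are each obtained as the unique morphism out of an iterated soft product induced by the universal property, so they automatically live in $\mathbf{STGrp}$ and are invertible.

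The coherence axioms (pentagon, triangle, and the two hexagons for the braiding) together with $\sigma\circ\sigma=\mathbb{I}$ then reduce to equalities between morphisms into a product; by the uniqueness clause of the universal property of finite products, each such equality is checked by post-composing with the projection morphisms $(P_{\xi_i},q_{G_i})$, and in every case both sides collapse to the same tuple of projections. The main obstacle one might worry about is whether the soft continuity of the induced morphisms (associators, unitors, braiding, and their inverses) is automatic; however, this is precisely what the product theorem above guarantees, since it was proved there that the arrow into a soft product induced by a cone of morphisms in $\mathbf{STGrp}$ is itself a morphism of soft topological groups. Combining these observations, $\mathbf{STGrp}$ is a cartesian monoidal category, and therefore by \cite{Baez} it is a symmetric monoidal category.
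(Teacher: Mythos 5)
Your proposal is correct and follows essentially the same route the paper intends: the corollary is deduced from the two preceding theorems establishing binary products and a terminal object in $\mathbf{STGrp}$, combined with the fact cited from the introduction that every cartesian monoidal category is symmetric monoidal. The paper leaves this deduction implicit (no proof is printed for the corollary), so your write-up simply supplies the details of the same argument.
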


\section*{Conflict of interest}
The authors declare that they have no conflicts of interest.

\section*{Author contributions}
Nazmiye Alemdar: Conceptualization, formal analysis, investigation,
methodology,  validation, writing-original
draft, writing-review \& editing. H\"{u}rmet Fulya Ak{\i}z: Conceptualization, methodology,  writing-original
draft. Halim Ayaz: Conceptualization, investigation. The final version of the manuscript was read and approved by all
authors.



\begin{thebibliography}{99}

\bibitem{Zadeh}
L. A. Zadeh, Fuzzy sets,
\newblock {\it Information and Control},
\newblock \textbf{8} (1965), 338--353.

\bibitem{Pawlak}
Z. Pawlak, Rough sets,
\newblock {\it International Journal of Computer \& Information Sciences},
\newblock \textbf{11}(5) (1982), 341--356.

\bibitem{Mol}
D. Molodtsov, Soft set theory-First results,
\newblock {\it Computers \& Mathematics with Applications},
\newblock \textbf{37} (1999), 19--31.

\bibitem{Tripathy}
B. K. Tripathy, T. R. Sooraj, R. N. Mohanty, Application of Soft Set in Game Theory,
\newblock {\it Encyclopedia of Informotion Science and Technology}, 4th ed.; IGI Global: Hershey, PA, USA, 2018.


\bibitem{Maji}
P. K. Maji, R. Bismas, A. R. Roy, Soft set theory,
\newblock {\it Computers \& Mathematics with Applications},
\newblock \textbf{45} (2003), 555--562.

\bibitem{br1}
H. Akta\c{s}, N. \c{C}a\v{g}man, Soft sets and soft groups,
\newblock {\it Information Sciences},
\newblock \textbf{177} (2007), 2726--2735.

\bibitem{Naz}
M. Shabir, M. Naz, On soft topological spaces,
\newblock {\it Computers and Mathematics with Applications},
\newblock \textbf{61} (2011), 1786--1799.

\bibitem{Naim}
N. \c{C}a\v{g}man, S. Karata\c{s}, S. Engino\v{g}lu, Soft topology,
\newblock {\it Computers and Mathematics with Applications},
\newblock \textbf{62} (2011), no. 1, 351--358.

\bibitem{Kha}
A. Kharal, B. Ahmad, Mappings on soft classes,
\newblock {\it New Mathematics and Natural Computation},
\newblock \textbf{7} (2011), no. 3, 471--481.

\bibitem{Aygun}
A. Ayg\"{u}no\v{g}lu, H. Ayg\"{u}n, Some notes on soft topological spaces,
\newblock {\it Neural Computing and Applications},
\newblock \textbf{21} (Suppl 1) (2012), 113--119.
\bibitem{Zorlutuna}
I. Zorlutuna, M. Akda\v{g}, W. Min, S. Atmaca, Remarks on soft topological spaces,
\newblock {\it Annals of Fuzzy Mathematics and Informatics}
\newblock \textbf{3}(2) (2012), 171--185.


\bibitem{Hida}
T. Hida, Soft topological group,
\newblock {\it Annals of Fuzzy Mathematics and Informatics},
\newblock \textbf{8}(6) (2014), 1001--1025.


\bibitem{Tahat}
M. K. Tahat, F. Sidky, M. Abo-Elhamayel, Soft topological soft groups and soft rings,
\newblock {\it Soft Computing}
\newblock \textbf{22} (2018), 7143--7156.

\bibitem{Eilenberg}
S. Eilenberg, Topological methods in abstract algebra: Cohomology theory of groups,
\newblock {\it Bulletin of the American Mathematical Society},
\newblock \textbf{55} (1949), 3--37.
\newblock Available from:

\bibitem{MacLane}
S. Mac Lane, {\em Categories for the Working Mathematician};
\newblock Graduate Texts in Mathematics, Vol. 5; Springer: New York, NY, USA, 1971.

\bibitem{Benabou}
J. B\'{e}nabou, Cat\'{e}gories avec multiplication,
\newblock {\it C. R. Acad. Sci. Paris},
\newblock \textbf{256} (1963), 1887--1890.

\bibitem{MacLane2}
S. MacLane, Natural associativity and commutativity,
\newblock {\it Rice Institute Pamphlet-Rice University Studies},
\newblock \textbf{49}(4) (1963).



\bibitem{Baez}
J. Baez, M. Stay, Physics, Topology, Logic and Computation: A Rosetta Stone, in {\it New Structures for Physics} (B. Coecke, Ed.),
\newblock Lecture Notes in Physics, Vol. 813, Springer: Berlin, Germany, 2011, 95--174.

\bibitem{AlemdarArslan}
N. Alemdar, H. Arslan, A Note on the Soft Group Category,
\newblock {\it Axioms},
\newblock \textbf{14} (2025), 323.

\bibitem{Rotman}
J. J. Rotman, {\em An Introduction to Algebraic Topology};
\newblock Springer: New York, NY, USA, 1988.


\bibitem{Alemdar2013}
N. Alemdar, O. Mucuk, Existence of covering topological $R$-modules,
\newblock {\it Filomat},
\newblock \textbf{27} (2013), 1121--1126.

\bibitem{MucukandSahan}
O. Mucuk, T. \c{S}ahan, Coverings and crossed modules of topological group-groupoids,
\newblock {\it Turkish Journal of Mathematics},
\newblock \textbf{38} (2014), 833--845.

\bibitem{BasarAkiz2025}
K. Ba\c{s}ar, H. F. Ak{\i}z, Irresolute Homotopy and Covering Theory in Irresolute Topological Groups,
\newblock {\it Axioms},
\newblock \textbf{14}(4) (2025), 308--323.

\bibitem{Alemdar2012}
N. Alemdar, O. Mucuk, The liftings of $R$-modules to covering groupoids,
\newblock {\it Hacettepe Journal of Mathematics and Statistics},
\newblock \textbf{41} (2012), 813--822.


\end{thebibliography}





\end{document}